\newtheorem{remark}{Remark}
\newtheorem{lemma}{Lemma}
\newtheorem{proposition}{Proposition}
\newtheorem{theorem}{Theorem}
\newtheorem{definition}{Definition}
\newtheorem{assumption}{Assumption}
\theoremstyle{definition}
\newtheorem{algorithm}{Algorithm}
\theoremstyle{plain} 
\newcommand{\bornia}[1]{\textcolor{black}{#1}}
\newcommand{\giachi}[1]{\textcolor{black}{#1}}
\begin{document}

\begin{frontmatter}

\title{\bf{{Convergence estimates for multigrid algorithms with SSC smoothers and applications to overlapping domain decomposition}}}

\author{E. Aulisa \fnref{giacomoFootnote}}
\address{Department of Mathematics and Statistics, Texas Tech University}

\author{G. Bornia \fnref{giacomoFootnote}}
\address{Department of Mathematics and Statistics, Texas Tech University}

\author{S. Calandrini \fnref{giacomoFootnote}}
\address{Department of Mathematics and Statistics, Texas Tech University}

\author{G. Capodaglio \fnref{giacomoFootnote}\corref{giacomoCorresponding}}
\address{Department of Mathematics and Statistics, Texas Tech University}
\cortext[giacomoCorresponding]{Corresponding Author}
\ead{giacomo.capodaglio@ttu.edu}

\fntext[giacomoFootnote]{1108 Memorial Circle, Department of Mathematics and Statistics, Texas Tech University, Lubbock TX 79409, USA}

\begin{abstract}
In this paper we study convergence estimates for a multigrid algorithm with 
\bornia{smoothers of} \giachi{successive subspace correction (SSC) type},
applied to symmetric elliptic PDEs.
First, we revisit a general convergence analysis on a class of multigrid algorithms
in a fairly general setting, where no regularity assumptions are made on the solution.
In this framework, we are able to explicitly highlight the dependence of the multigrid error bound on the number of smoothing steps.
\giachi{For the case of no regularity assumptions, this represents a new addition to the existing theory.}
Then, we analyze \giachi{successive subspace correction} smoothing schemes
for a set of uniform and local refinement applications with either nested or non-nested overlapping subdomains.
For these applications, we explicitly derive bounds for the multigrid error,
\bornia{and identify sufficient conditions 
for these bounds to be independent of the number of multigrid levels}.
\giachi{For the local refinement applications, finite element grids with arbitrary hanging nodes configurations are considered.}
The analysis of these smoothing schemes is cast within the far-reaching \giachi{multiplicative Schwarz} framework.
\end{abstract}
 
 \begin{keyword}
 Multigrid; SSC algorithm; Domain Decomposition; Hanging nodes; Local refinement; V-cycle.
\end{keyword}

\end{frontmatter}

\section{Introduction}


Multigrid algorithms have been introduced in the literature since the 1960's
with pioneering works such as \cite{fedorenko1962relaxation,bakhvalov1966convergence,nicolaides1975multiple,brandt1977multi}.
A wide literature of both theoretical and computational works has been developed ever since, 
 driven by appealing features such as optimal computational complexity \cite{brandt1977multi,yserentant1993old}.
Multigrid methods have been studied for the approximate solution of partial differential equations
in various discretization schemes, 
starting with finite differences \cite{fedorenko1962relaxation,nicolaides1975multiple,brandt1977multi} 
and then moving to finite elements \cite{nicolaides1977_l2,nicolaides1979some} and other settings.
The first results were obtained for elliptic operators of either symmetric \cite{nicolaides1977_l2}
or non-symmetric type \cite{bramble1994uniform, wang1993convergence, olshanskii2004convergence}.

Convergence proofs of multigrid algorithms usually rely on two properties 
referred to as the smoothing and the approximation property 
\cite{hackbusch2013multi, braess2007finite,bramble1987new}.
The former is related to the definition of the smoothing operator involved in the algorithm,
while the latter is usually proved assuming full elliptic regularity 
for the solution of the partial differential equation.
A breakthrough in the convergence analysis took place with
\cite{bramble1991convergence}.
In this work the elliptic regularity assumption has been dropped.
The error bound obtained is not optimal in the sense that
it becomes worse as the number of multigrid levels increases;
moreover, no dependence of the bound on the number of smoothing iterations is shown.
Further work has been done in this direction by Bramble and Pasciak \cite{bramble1993new}, 
where they showed
that optimal convergence can be obtained provided that \textit{partial} regularity assumptions are made.
However, no dependence on the number of smoothing iterations was reported yet.
In \cite{bramble1992analysis}, convergence estimates were obtained by the same authors for the case
of a multigrid algorithm with non-symmetric subspace correction smoothers under no regularity assumptions.
The error bound obtained for applications to both uniform and local refinement 
showed quadratic dependence on the total number of multigrid spaces 
but not on the number of smoothing steps.
An improvement in addressing this matter was made in \cite{brenner2002convergence}, 
where the author showed that the multigrid error bound is optimal 
and can be improved when increasing the number of smoothing iterations,
under partial regularity assumptions and using a Richardson relaxation scheme. 
\giachi{More recently, further work on multigrid methods that rely on minimal regularity assumptions
has been done in \cite{chen2012optimal}, where graded meshes obtained by a variant of the newest vertex bisection
method are considered.}

This work aims at first to further contribute to the description of multigrid methods 
by carrying out a general convergence analysis that does not require any regularity assumption. 
Only three clear assumptions
on the smoothing error operator are identified
which produce a multigrid error bound that shows dependence on the number of smoothing steps
and on the continuity constants of the smoothing error operators in the topology of the energy norm.
\giachi{As mentioned in the abstract, the explicit dependence of the multigrid error bound on
the number of smoothing iterations is a new result under no-regularity assumptions.}

The setup of this framework is then used to analyze
 smoothing schemes of \giachi{successive subspace correction (SSC)} type.
A unifying scheme encompassing successive subspace correction algorithms 
is given by Xu in \cite{xu1992iterative}.
See also \cite{bramble1992analysis} for an analysis of smoothers in a general framework 
to which subspace correction smoothers of either additive or multiplicative type belong.
We will study both uniform and local refinement applications \giachi{with arbitrary hanging nodes configurations},
to show under what conditions on the subdomain solvers multigrid convergence is achieved,
and when it is possible to obtain optimal multigrid error bounds, i.e.,
independent of the total number of levels.
For the uniform refinement case, such results upgrade the ones in \cite{bramble1992analysis}.
For the two local refinement applications, 
we derive ad-hoc decompositions of finite element spaces 
and set suitable choices of approximate subdomain solvers.
These are needed when dealing with hanging nodes that are introduced by the local refinement procedure.
In \giachi{the first local refinement} case, we construct a decomposition that has the advantage of being easy to implement and suitable for 
standard finite element codes,
but it does not allow freedom in the choice of the subdomains on which the subspaces are built.
A second decomposition requires additional work to ensure continuity of the finite element solution 
and so it requires a non-standard finite element implementation. 
However, it enables a choice of the subdomains that does not depend on the multigrid level.
\giachi{Numerical results for a similar choice of decomposition, together with a complexity analysis
of the resulting algorithm, have been provided in \cite{janssen2011adaptive}.
In such a work, the smoothing procedure is carried out only locally, 
rather than at all nodes of a given multigrid level, as we do in this theory.
A convergence analysis for this local smoothing approach is available in \cite{bramble1992analysis}.}
In both the aforementioned local refinement applications \giachi{studied in this paper}, 
the multigrid error bound shows a quadratic dependence on the multigrid level 
and this agrees with what was found in \cite{bramble1992analysis}. 
Furthermore, we explicitly show a dependence on the number of smoothing steps.
This allows us to identify conditions on the number of smoothing iterations 
that guarantee convergence as well as optimality of the error bound. 
Basically, these conditions establish a balance between the action of the smoothing error 
and the number of smoothing steps.
In the applications, we obtain smoothing error bounds 
that are either constant or increase tending to one with increasing level, 
thus corresponding to a poorer smoothing action with increasing level.
In order to have convergence, only one smoothing iteration at each level is sufficient.
Nevertheless, we find that optimality of the multigrid error bound
may be obtained only with a quadratically increasing number of smoothing steps.
Thus, the convergence deterioration of the smoother with an increasing number of levels 
may be compensated by an ad-hoc number of smoothing steps 
in order to obtain optimality.
\giachi{We remark that when a local smoothing procedure is conducted, increasing the number of smoothing iterations 
at a given multigrid level would only improve the multigrid error bound up to a given saturation value.
As a consequence, a deterioration of the error bound that goes with the total number of levels could not be
balanced by increasing the number of smoothing steps.}

  The outline of the paper is as follows. 
  In Section \ref{sec_mg} the multigrid algorithm is 
  described together with a general convergence theory.
  Such theory is based on three assumptions on the smoothing error operator and needs no regularity.
  Section \ref{sec_smoother} illustrates the algorithm used for the smoothing iteration 
  and shows how it can be related to the multigrid convergence theory.  
  Uniform and local refinement applications of the analysis described in the previous sections
  are presented in Section \ref{sec_app}, where convergence bounds are obtained for the specific cases.
  Finally, we draw our conclusions.

 \section{The multigrid algorithm} \label{sec_mg}

 In this section we describe the multigrid algorithm subject to our analysis.
  Throughout the paper, the total number of levels will be denoted as $J$.
  For $k = 0,  \dots  ,J$, let $V_k$ be a finite-dimensional vector space such that 
 \begin{equation} \label{mg_spaces}
V_0 \subset V_1 \subset \dots \subset V_J   \,,
 \end{equation}
 and let $ (\cdot,\cdot) $ and  $ a(\cdot,\cdot) $ be two symmetric positive definite (SPD) 
 bilinear forms on $ V_k $.
 Hence, both bilinear forms are inner products on $V_k$.
 Let $|| \, \cdot \, || = \sqrt{( \cdot , \cdot )}$  
 and $ || \, \cdot \, ||_E = \sqrt{a ( \cdot , \cdot )}$ be the corresponding induced norms.
 Associated with these inner products, 
 let us also define the operators $ Q_k : V_J \rightarrow V_k $ and $ P_k : V_J \rightarrow V_k $ 
as the orthogonal projections with respect to $(\cdot , \cdot )$ and $a(\cdot , \cdot )$
respectively,
namely, for all $ v \in V_J $ and all  $ w \in V_k $ 
\begin{align}
 (Q_k v , w)  = (v , w ) \,, \quad
 a(P_k v , w)  = a(v , w ) \,.
\end{align}
Note that from this definition it follows that
\begin{align}
  a((I - P_k) v , w) = 0 \quad \mbox{for all} \quad w \in V_k \,.
\end{align}
 
 The multigrid algorithm seeks solutions of the following problem: 
 given $ f \in V_J $, find $ u \in V_J $ such that
 \begin{align}
  a(u,v) = (f, v) \quad \mbox{for all} \quad v \in V_J \,.
 \end{align}
 Before we can present the multigrid algorithm studied in this paper,
 we need to introduce a few operators that will be used in the description of the method.
 
 For $ k = 0,  \dots, J $, define the operators $ A_k : V_k \rightarrow V_k $ as
  \begin{equation}
  (A_k u, v) = a(u , v) \quad \mbox{for all} \quad u,v \in V_k \,.
 \end{equation}
 The operator $A_k$ is SPD with respect to $ (\cdot,\cdot) $  
 as a consequence of the symmetry and positive definiteness of $ a(\cdot,\cdot) $.
 If we set $f_k = Q_k f $, 
 then at level $k$ the problem we want to solve consists in finding $u_k \in V_k$ such that
 \begin{align} \label{lin_sys}
  A_k u_k = f_k.
 \end{align}
 
 The \textit{prolongation} $I_{k-1}^k : V_{k-1} \rightarrow V_k$ 
           and  \textit{restriction} $I^{k-1}_k : V_{k} \rightarrow V_{k-1}$ operators are defined for all $ v \in V_{k-1}$ and all $\, w \in V_k $ by
  \begin{align}
       I_{k-1}^k v  = v , \quad (I^{k-1}_k w , v)  = (w , I_{k-1}^k v) \,.
  \end{align}
  
 We are now ready to present the multigrid algorithm considered in this paper.
 Let $B_k: V_{k} \rightarrow V_{k}$ denote a \textit{smoothing operator}. 
 Associated to $ B_k $ we can define a \textit{smoothing error operator} $S_k : V_{k} \rightarrow V_{k}$
 as $ S_k = I - B_k A_k$,
 whose properties will be discussed later in detail.
 
 For $k=0, \ldots, J$, let  $ MG_k: V_{k} \times V_{k}  \rightarrow V_{k}$ be the \textit{multigrid operators}. 
 The purpose of the operators $  MG_k $ is to yield an approximate solution to  \eqref{lin_sys}.
 They are defined here in a recursive manner.
 \begin{algorithm}[V-cycle multigrid] \label{alg_Vcycle}
 Let $ z_{k}^{(0)}, f_k \in V_{k} $. 
 
 If  $k = 0$, $MG_0(z_{0}^{(0)} , f_0) = A_0^{-1} f_0$ (namely, the exact solution is obtained).

 For $k \geq 1$, $MG_k(z_k^{(0)}, f_k)$ is obtained recursively as follows.
 
 \begin{enumerate}
 \item {\it{Pre-smoothing.}}
  For 1 $\leq i \leq m_k$, let 
 $$z_k^{(i+1)} = z_k^{(i)} + B_k (f_k - A_k z_k^{(i)})\,.$$
 \item {\it{Error Correction.}}
 Let $\bar{f}_k = I^{k-1}_k \left(f_k - A_k z_k^{(m_k)}\right) $,  $q_{k-1} = MG_{k-1}(0, \bar{f}_k)$. Then,
 $$ z_k^{(m_{k} + 1)} = z_k^{(m_{k})} + I_{k-1}^k q_{k-1}  \,. $$
  \item {\it{Post-smoothing.}}
  For $m_k +2 \leq i \leq 2 \, m_k +1 $, let
 $$z_k^{(i+1)} = z_k^{(i)} + B_k (f_k - A_k z_k^{(i)}).$$
 \end{enumerate}

 \end{algorithm}

   Note that the total number of pre-smoothing iterations $m_k$ is assumed to be dependent on the level $k$.
    Also, we are assuming the same number $ m_{k} $ of pre-smoothing and post-smoothing steps at each level.
   We also remark that we consider a symmetric version of the multigrid algorithm as in \cite{bramble1991convergence},
   in the sense that both pre-smoothing and post-smoothing are performed.
   Since we have only one iteration for the error correction step, 
 this algorithm is referred to as V-cycle \cite{braess2007finite}.

\subsection{Convergence analysis}

Here we present a general convergence analysis of the multigrid algorithm \ref{alg_Vcycle}.
We do so by introducing sufficient assumptions on the smoothing error operator $S_k$ 
for the derivation of the convergence results.
It is then clear that the convergence properties of the multigrid algorithm 
are intimately dependent on the smoothing procedures.

Before listing the assumptions, we recall the expressions of the error operators 
associated to $ B_k $ and $MG_k$.
Let $z_k^{(i)}$ be the output of a pre- or post-smoothing iteration at level $k$.
If we denote the associated error as $ e_k^{(i)} = u_k - z_k^{(i)} $, then substituting for $z_k^{(i)}$ we have
\begin{equation}
 e_k^{(i)} =   u_k - z_k^{(i-1)} - B_k (f_k - A_k z_k^{(i-1)}) 
     = S_k \,  e_k^{(i-1)}\,,
\end{equation}
so that the effect of the smoothing step can be described as
\begin{align}
 e_k^{(m_k)} = S_k^{m_k} \, e_k^{(0)} \, .
\end{align}
The \textit{multigrid error operator} $ E_k : V_k \rightarrow V_k$
associated to $MG_k$ is defined recursively as
\begin{equation} \label{mg_error}
 E_0  = 0 \,,\quad  E_k  = S_k^{m_k} [I - (I - E_{k-1}) P_{k-1}] S_k^{m_k} \,.
\end{equation}
Note that $E_0$ is assumed to be zero since we are using a direct solver at level $k=0$.
This means that $B_0 = A_0^{-1}$ and $S_0 = 0$.
Here we summarize the properties of the $ E_k $ operators. 
For a proof see \cite{Brenner} or \cite{bramble1991convergence} for the special case where $z_k^{(0)} = 0$.
\begin{proposition}
Let $ z_k^{(0)} \in V_k $, and let $ u_k $ be the exact solution to $ A_k u_k = f_k $. Then
\begin{displaymath}
   u_k - MG_k( z_k^{(0)},f_k) = E_k \left( u_k  - z_k^{(0)} \right), \qquad k\geq 0.
\end{displaymath}
 Moreover, the $E_k$'s are symmetric positive semidefinite with respect to $a(\cdot , \cdot )$
for $k \geq 0$. 
\end{proposition}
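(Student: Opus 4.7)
The plan is to prove both assertions by induction on $k$, exploiting the recursive definition of $E_k$ in \eqref{mg_error} and tracing the multigrid algorithm step by step.

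For the first identity, the base case $k=0$ is immediate since $MG_0$ returns the exact solution, hence the error is zero, matching $E_0=0$. For the inductive step, I would process the three phases of Algorithm \ref{alg_Vcycle} in turn. After the pre-smoothing block, the error $e_k^{(m_k)}=u_k-z_k^{(m_k)}$ equals $S_k^{m_k}(u_k-z_k^{(0)})$, since each pre-smoothing update contributes a factor of $S_k$ to the error. The key reduction for the coarse-grid correction step is the observation that the coarse-grid residual $\bar f_k = I^{k-1}_k(f_k-A_kz_k^{(m_k)})$ has $P_{k-1}e_k^{(m_k)}$ as its exact coarse solution; this follows from the definitions of $I^{k-1}_k$, $A_{k-1}$ and $P_{k-1}$ by testing against an arbitrary $w\in V_{k-1}$. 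Applying the inductive hypothesis to $MG_{k-1}(0,\bar f_k)$ then gives $q_{k-1}=(I-E_{k-1})P_{k-1}e_k^{(m_k)}$, so the error after correction becomes $[I-(I-E_{k-1})P_{k-1}]e_k^{(m_k)}$. A final factor of $S_k^{m_k}$ from post-smoothing reproduces precisely the recursion \eqref{mg_error}.

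For the symmetry assertion, I would argue by induction again, relying on two structural facts: the $a$-orthogonal projection $P_{k-1}$ is $a$-self-adjoint, and the smoothing operator $S_k=I-B_kA_k$ is $a$-self-adjoint under the standard assumption that $B_k$ is symmetric with respect to $(\cdot,\cdot)$, which is the usual hypothesis for symmetric multigrid and should be explicitly noted. Writing $T_{k-1}=I-(I-E_{k-1})P_{k-1}=(I-P_{k-1})+E_{k-1}P_{k-1}$, the first summand is $a$-self-adjoint on $V_k$, and for the second I would show that for $u,v\in V_k$,
\begin{equation*}
a(E_{k-1}P_{k-1}u,v)=a(E_{k-1}P_{k-1}u,P_{k-1}v)=a(P_{k-1}u,E_{k-1}P_{k-1}v)=a(u,E_{k-1}P_{k-1}v),
\end{equation*}
using that $E_{k-1}P_{k-1}u\in V_{k-1}$, the $a$-self-adjointness of $E_{k-1}$ on $V_{k-1}$ (inductive hypothesis), and the defining property of $P_{k-1}$. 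Combining these, $T_{k-1}$ is $a$-self-adjoint, and then $E_k=S_k^{m_k}T_{k-1}S_k^{m_k}$ is $a$-self-adjoint since each factor is.

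For positive semi-definiteness, observe that $a(E_ku,u)=a(T_{k-1}S_k^{m_k}u,S_k^{m_k}u)$ by $a$-symmetry of $S_k^{m_k}$, so it suffices to check that $T_{k-1}$ is $a$-PSD. Expanding,
\begin{equation*}
a(T_{k-1}w,w)=a\bigl((I-P_{k-1})w,w\bigr)+a\bigl(E_{k-1}P_{k-1}w,P_{k-1}w\bigr),
\end{equation*}
and both terms are non-negative: the first equals $\|(I-P_{k-1})w\|_E^2$ by $a$-orthogonality of the projection, and the second is non-negative by the inductive PSD assumption on $E_{k-1}$.

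The main obstacle I anticipate is not computational but notational: correctly interpreting $E_{k-1}$ as acting on $V_k$ only through its composition with $P_{k-1}$, and being explicit about the hypothesis on $B_k$ (or the post-smoothing operator being the $(\cdot,\cdot)$-adjoint of the pre-smoothing one) that is implicitly required for $S_k$ to be $a$-self-adjoint. Once this is cleanly stated, the rest reduces to routine bookkeeping with the recursion.
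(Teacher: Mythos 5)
Your proof is correct and complete. The paper itself does not prove this proposition --- it defers to the cited references --- and your induction (identifying $P_{k-1}e_k^{(m_k)}$ as the exact solution of the coarse residual equation, recovering the recursion $E_k=S_k^{m_k}[I-(I-E_{k-1})P_{k-1}]S_k^{m_k}$, and splitting $I-(I-E_{k-1})P_{k-1}=(I-P_{k-1})+E_{k-1}P_{k-1}$ to get $a$-symmetry and positive semidefiniteness) is precisely the standard argument found there. You are also right to flag that the $(\cdot,\cdot)$-symmetry of $B_k$ is needed for $S_k$ to be $a$-self-adjoint: the paper only imposes symmetry of $S_k$ later, in Assumption \ref{smooth_SPSD}, so at the point where the proposition is stated this is indeed an implicit hypothesis worth making explicit.
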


 We now state sufficient hypotheses for multigrid convergence.
 As the expression of the multigrid error operator \eqref{mg_error} suggests,
 once the operators $ P_k $ are given by the differential problem at hand,
  multigrid convergence is affected by the properties of $ S_k $ and by the number of smoothing steps $ m_k $.
 These features are reflected in the following assumptions.
  
\begin{assumption} \label{smooth_SPSD}
 For all $k = 1,  \dots, J \, , \quad S_k$ is a symmetric positive semidefinite operator on $V_k$ 
 with respect to $a(\cdot , \cdot )$.
  This means that for all $v$, $w$ in $V_k$ we have 
  \begin{align}\label{assumpt1}
   a(S_k v , w) = a(v , S_k w) \quad \mbox{and} \quad a(S_kv , v ) \geq 0.
  \end{align}
\end{assumption}

\begin{assumption} \label{smooth_deltak_lt1}
  For all $ k = 1,  \dots, J $ there exists a number $\delta_{k}$ 
    with $0 < \delta_{k} < 1$ such that 
 \begin{align}\label{assumpt2}
  a(S_k v ,v) \leq \delta_{k} \, \, a(v ,v) \quad \mbox{for all} \quad v \in V_k.
 \end{align}
\end{assumption}

\begin{assumption} \label{smooth_psik_nonincr}
  For all $k = 1,  \dots, J$, the finite sequence $ \psi_k = m_k (1-\delta_{k}) $ is non-increasing, 
  where $ m_k $ is the number of smoothing steps per level 
  and $ \delta_k $ is the quantity in Assumption \ref{smooth_deltak_lt1}.
\end{assumption}

\subsubsection{Smoothing and approximation properties}

Assumptions \ref{smooth_SPSD} and \ref{smooth_deltak_lt1} guarantee that the operators $S_k$ 
satisfy certain monotonicity properties 
given by Lemmas \ref{lemma_monot_Sk_1} and \ref{lemma_monot_Sk_2} below.
These properties will lead to the smoothing property of Lemma \ref{lemma_smoothing_prop}.

\begin{lemma} \label{lemma_monot_Sk_1}
Let Assumptions \ref{smooth_SPSD} and \ref{smooth_deltak_lt1} hold.
Let $\alpha$ and $\beta$ be two integers such that $0 \leq \alpha \leq \beta$. Then,
\begin{align}\label{alpha_beta}
 a(S_k^{\beta} v , v ) \leq a(S_k^{\alpha} v , v) \quad \mbox{for all} \quad v \in V_k.
\end{align}
\end{lemma}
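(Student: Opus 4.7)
My plan is to reduce the inequality to a termwise comparison of eigenvalue powers via the spectral decomposition of $S_k$. By Assumption \ref{smooth_SPSD}, the operator $S_k$ is symmetric and positive semidefinite with respect to the inner product $a(\cdot,\cdot)$ on the finite-dimensional space $V_k$, so the spectral theorem furnishes an $a$-orthonormal basis $\{\phi_i\}$ of $V_k$ made of eigenvectors of $S_k$ with real nonnegative eigenvalues $\lambda_i$. Testing Assumption \ref{smooth_deltak_lt1} against each $\phi_i$ gives $\lambda_i = a(S_k \phi_i,\phi_i) \leq \delta_k$, so the spectrum lies in $[0,\delta_k] \subset [0,1)$.

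I would then expand an arbitrary $v = \sum_i c_i \phi_i$ and use $a$-orthonormality to write
\begin{equation*}
a(S_k^n v, v) = \sum_i c_i^2 \, \lambda_i^n \qquad \text{for every integer } n \geq 0.
\end{equation*}
Since $0 \leq \lambda_i < 1$, the map $n \mapsto \lambda_i^n$ is non-increasing (with the convention $0^0 = 1$ handling the corner case $\alpha = 0$, $\lambda_i = 0$), so $\lambda_i^\beta \leq \lambda_i^\alpha$ for every $i$ whenever $\alpha \leq \beta$. Summing against the nonnegative weights $c_i^2$ gives the claimed inequality.

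I do not foresee any substantive obstacle with this route: the argument collapses to the spectral theorem plus a scalar comparison of powers of a number in $[0,1)$. A purely algebraic alternative would induct on $\beta - \alpha$, reducing matters to the single-step bound $a(S_k^{n+1} v, v) \leq a(S_k^n v, v)$. For even $n = 2m$ this bound follows very cleanly by setting $w = S_k^m v$, using the $a$-symmetry of $S_k^m$ to rewrite $a(S_k^{n+1} v, v) = a(S_k w, w)$, and then applying Assumption \ref{smooth_deltak_lt1}. The odd case is awkward because the analogous rewriting produces $a(S_k w, S_k w)$ on the left and $a(S_k w, w)$ on the right, and the comparison between these two quantities is itself a special instance of the lemma. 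This is precisely the step where an inductive route gets stuck without introducing either an $a$-symmetric square root of $S_k$ or a log-convexity estimate $a(S_k^n v,v)^2 \leq a(S_k^{n-1} v,v)\, a(S_k^{n+1} v,v)$ from Cauchy-Schwarz applied to the PSD form $(u,w)\mapsto a(S_k^{n-1}u,w)$. That is the reason I would favor the spectral-decomposition proof from the outset.
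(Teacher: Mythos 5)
Your proof is correct, but it takes a different route from the paper's. The paper proves only the single step $\beta=\alpha+1$ and closes by induction: it invokes the unique $a$-symmetric positive semidefinite square root $S_k^{\alpha/2}$ of $S_k^{\alpha}$, rewrites $a(S_k^{\alpha+1}v,v)=a(S_kS_k^{\alpha/2}v,S_k^{\alpha/2}v)$, and applies Assumption \ref{smooth_deltak_lt1} to $w=S_k^{\alpha/2}v$. This is exactly the device you identify at the end of your proposal as the way to unstick the odd case of the naive induction, so your diagnosis of where the termwise induction fails is on target --- the paper simply commits to that square-root fix rather than avoiding it. Your spectral argument instead diagonalizes $S_k$ once in an $a$-orthonormal eigenbasis and reduces the whole family of inequalities, for all $\alpha\leq\beta$ simultaneously, to the scalar monotonicity of $n\mapsto\lambda_i^{\,n}$ on $[0,1)$; since $V_k$ is finite-dimensional and $S_k$ is $a$-symmetric PSD, every step is justified. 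What each approach buys: yours is shorter, handles all exponents at once, and makes the mechanism (eigenvalues in $[0,\delta_k]$) completely transparent; the paper's needs only the existence of a PSD square root (a weaker form of the spectral theorem, citable as a black box) and its one-step structure mirrors the subsequent Lemma \ref{lemma_monot_Sk_2}, which is proved by the same incremental pattern. Either proof is acceptable here.
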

\begin{proof}
We will prove it for $\beta=\alpha+1 $ and the result will then follow by induction.
By \eqref{assumpt1}, $S_k$ is positive semidefinite with respect to $ a(\cdot,\cdot)$,
therefore also $S_k^{\alpha}$ is. 
Then there is a unique positive semidefinite square root operator $S_k^{\frac{\alpha} {2}}$, see \cite{horn2012matrix}.
By the symmetry of $S_k$ it follows
that $S_k^{\frac{\alpha} {2}}$ is symmetric as well.
Considering also \eqref{assumpt2},
we have that for $v \in V_k$
\begin{align*}
 a(S_k^{\alpha+1}v ,v) 
 &= a(S_k^{\alpha}S_k v , v) = a(S_k^{\frac{\alpha}{2}}S_k^{\frac{\alpha}{2}}S_k v , v)  \\
 &= a(S_k^{\frac{\alpha}{2}}S_k v , S_k^{\frac{\alpha}{2}}v) = a(S_k S_k^{\frac{\alpha}{2}}v , S_k^{\frac{\alpha}{2}}v)  \\
 &\leq a(S_k^{\frac{\alpha}{2}}v , S_k^{\frac{\alpha}{2}}v) = 
  a(S_k^{\alpha} v , v)\,. 
\end{align*}
\end{proof}

Using the previous lemma, we can prove the next result.

\begin{lemma}  \label{lemma_monot_Sk_2}
 Let Assumptions \ref{smooth_SPSD} and \ref{smooth_deltak_lt1} hold.
 Let $\alpha$ and $\beta$ be two integers such that $0 \leq \alpha \leq \beta$. Then,
\begin{align}
 a((I-S_k)S_k^{\beta} v , v ) \leq a((I-S_k)S_k^{\alpha} v , v) \quad \mbox{for all} \quad v \in V_k.
\end{align}
\end{lemma}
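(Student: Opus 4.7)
The plan is to reduce the statement to Lemma \ref{lemma_monot_Sk_1} by applying the same square-root trick used in its proof, but this time to the operator $I - S_k$ instead of to $S_k^{\alpha}$. The idea is to absorb the factor $(I - S_k)$ into the test vector via a symmetric square root, so that the inequality to be proved becomes a direct instance of \eqref{alpha_beta}.

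First, I would set $T_k := I - S_k$ and verify that $T_k$ is symmetric positive semidefinite with respect to $a(\cdot,\cdot)$. Symmetry follows immediately from Assumption \ref{smooth_SPSD}. Positive semidefiniteness follows from Assumption \ref{smooth_deltak_lt1}, since for every $v \in V_k$ one has
\begin{equation*}
a(T_k v, v) = a(v,v) - a(S_k v, v) \geq (1 - \delta_k)\, a(v,v) \geq 0.
\end{equation*}
As in the proof of Lemma \ref{lemma_monot_Sk_1}, $T_k$ therefore admits a unique symmetric positive semidefinite square root $T_k^{1/2}$ with respect to $a(\cdot,\cdot)$. Since $T_k = I - S_k$ is a polynomial in $S_k$, it commutes with every power $S_k^{\gamma}$; by standard spectral theory this commutativity is inherited by $T_k^{1/2}$.

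With these ingredients in place, for any integer $\gamma \geq 0$ I can rewrite
\begin{equation*}
a\bigl((I - S_k)\, S_k^{\gamma} v, v\bigr) = a\bigl(T_k^{1/2}\, S_k^{\gamma}\, T_k^{1/2} v,\, v\bigr) = a\bigl(S_k^{\gamma}\, T_k^{1/2} v,\, T_k^{1/2} v\bigr) = a\bigl(S_k^{\gamma} w,\, w\bigr),
\end{equation*}
where $w := T_k^{1/2} v \in V_k$, using the commutativity and symmetry of $T_k^{1/2}$. Applying this identity twice, with $\gamma = \beta$ and $\gamma = \alpha$, and invoking Lemma \ref{lemma_monot_Sk_1} on the test vector $w$ then yields $a(S_k^{\beta} w, w) \leq a(S_k^{\alpha} w, w)$, which is exactly the required inequality.

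The only delicate point is the existence of $T_k^{1/2}$ as a symmetric $a$-PSD operator that commutes with $S_k$; however this is the same kind of observation already invoked in Lemma \ref{lemma_monot_Sk_1}, applied now to $T_k$ rather than to $S_k$, so no real obstacle arises. The remaining computation is essentially algebraic rearrangement.
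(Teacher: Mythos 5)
Your proof is correct, but it takes a genuinely different route from the paper. The paper proves the case $\beta = \alpha+1$ by a direct expansion: it writes $v = (I-S_k)v + S_k v$, applies Lemma \ref{lemma_monot_Sk_1} to the test vector $(I-S_k)v$, and then rearranges terms using the symmetry and positive semidefiniteness of $S_k^{\alpha+1}$, concluding by induction on $\beta - \alpha$. You instead observe that $T_k = I - S_k$ is itself $a$-symmetric positive semidefinite (symmetry from Assumption \ref{smooth_SPSD}, semidefiniteness from Assumption \ref{smooth_deltak_lt1} via $a(T_k v, v) \geq (1-\delta_k)a(v,v) \geq 0$), extract its $a$-symmetric PSD square root $T_k^{1/2}$, and use the fact that in finite dimensions this square root is a polynomial in $T_k$ and hence commutes with $S_k^{\gamma}$, to rewrite $a((I-S_k)S_k^{\gamma}v,v) = a(S_k^{\gamma}w,w)$ with $w = T_k^{1/2}v$. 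The claim then follows in one stroke from Lemma \ref{lemma_monot_Sk_1} applied to $w$, with no induction and no telescoping algebra. This is a clean and arguably more transparent argument; what the paper's version buys in exchange is that it reuses only the square root already constructed in the proof of Lemma \ref{lemma_monot_Sk_1} and stays entirely at the level of elementary identities, at the cost of a longer chain of manipulations. The only point you should state explicitly rather than wave at is the commutativity of $T_k^{1/2}$ with $S_k$; citing that the square root of a PSD operator is a polynomial in that operator (or invoking the shared spectral decomposition of commuting symmetric operators) closes that gap completely.
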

\begin{proof}
As we did before, we will prove it for $\beta=\alpha+1 $ and the result will follow by induction.
\begin{align*}
 a((I-S_k)S_k^{\alpha+1} v , v )  
 & = a((I-S_k)S_k^{\alpha+1} v , (I -S_k + S_k)v ) \\
 & = a((I-S_k)S_k^{\alpha+1} v , (I -S_k)v) + a((I-S_k)S_k^{\alpha+1} v , S_kv) \\
 & = a(S_k^{\alpha+1}(I-S_k) v , (I -S_k)v) + a((I-S_k)S_k^{\alpha+1} v , S_kv) \\
 & \leq a(S_k^{\alpha}(I-S_k) v , (I -S_k)v) + a((I-S_k)S_k^{\alpha+1} v , S_kv) \qquad \mbox{(by \eqref{alpha_beta} )}\\
 = \, \, a(S_k^{\alpha}(I-S_k) v , v) &- a(S_k^{\alpha}(I-S_k) v , S_k v)  + a((I-S_k)S_k^{\alpha+1} v , S_kv)  \\
 = \, \, a(S_k^{\alpha}(I-S_k) v , v) &- a(S_k^{\alpha+1}(I-S_k) v , v)  + a(S_k^{\alpha+1}(I-S_k) v , S_kv) \\
 &= a(S_k^{\alpha}(I-S_k) v , v) - a(S_k^{\alpha+1}(I-S_k) v , (I-S_k)v) \\
 &\leq a(S_k^{\alpha}(I-S_k) v , v) \qquad \mbox{($S_k^{\alpha+1}$ is positive semidefinite, see \cite{horn2012matrix})} \\
 &= a((I-S_k) S_k^{\alpha}v , v) \,.
\end{align*}
\end{proof}

Now we are ready to prove the smoothing property of the operator $S_k$.

\begin{lemma}[Smoothing property] \label{lemma_smoothing_prop}
 Let Assumptions \ref{smooth_SPSD} and \ref{smooth_deltak_lt1} hold.
Let $ v \in V_k $. Then,
\begin{align}
 a((I - S_k)S_k^{2m_k}v ,v ) \leq \dfrac{1}{2m_k} \, a((I - S_k^{2m_k})v , v).
\end{align}
\end{lemma}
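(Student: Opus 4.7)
The plan is to exploit the telescoping identity
$$(I - S_k)\sum_{j=0}^{2m_k - 1} S_k^{j} = I - S_k^{2m_k},$$
combined with the monotonicity guaranteed by Lemma \ref{lemma_monot_Sk_2}, to control the left-hand side by an average of $2m_k$ equal or larger quantities.

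Concretely, I would first rewrite the right-hand side as a sum of quadratic forms:
$$a((I - S_k^{2m_k})v, v) = \sum_{j=0}^{2m_k - 1} a((I - S_k)S_k^{j} v, v).$$
Next, I would apply Lemma \ref{lemma_monot_Sk_2} with $\alpha = j$ and $\beta = 2m_k$ (which is valid since $j \leq 2m_k - 1 < 2m_k$) to conclude that each summand satisfies
$$a((I - S_k)S_k^{j} v, v) \geq a((I - S_k)S_k^{2m_k} v, v).$$
Summing this lower bound over the $2m_k$ values of $j$ immediately yields
$$a((I - S_k^{2m_k})v, v) \geq 2m_k \, a((I - S_k)S_k^{2m_k} v, v),$$
which rearranges to the desired inequality.

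I do not foresee a real obstacle here: the combinatorial structure of the telescoping sum pairs naturally with the monotonicity statement already established, so the argument is essentially a one-line averaging after unfolding the geometric series. The only delicate point to check is the index range in the telescoping identity, ensuring that the number of summands on the right is exactly $2m_k$ so that the factor $1/(2m_k)$ emerges; this is why the exponent $2m_k$ in the statement matches the length of the geometric sum.
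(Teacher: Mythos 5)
Your proposal is correct and is essentially identical to the paper's proof: both use Lemma \ref{lemma_monot_Sk_2} to bound each of the $2m_k$ copies of $a((I-S_k)S_k^{2m_k}v,v)$ by $a((I-S_k)S_k^{j}v,v)$ for $j=0,\dots,2m_k-1$ and then collapse the resulting sum via the telescoping identity. The only difference is cosmetic — you start from the right-hand side and the paper starts from the left.
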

\begin{proof}
By Lemma \ref{lemma_monot_Sk_2} we get
\begin{align*}
& (2m_k)a((I-S_k)S_k^{2m_k}v , v) =  \underbrace{a((I-S_k)S_k^{2m_k}v , v)  + \dots + a((I-S_k)S_k^{2m_k}v , v)}_{\text{$2 m_k$ times}}   \\
& \leq a((I-S_k)v , v) + a((I-S_k)S_kv , v) + \dots + a((I-S_k)S_k^{2m_k - 1}v , v) \\
& = a( (I - S_k + S_k - S_k^2 + \dots + S_k^{2m_k-1} - S_k^{2m_k}) v , v) \\
&  = a((I-S_k^{2m_k})v , v) \,.
\end{align*}
\end{proof}


Before we can show a bound on the error $E_k$, we need to establish the approximation property.
\begin{lemma} [Approximation property] \label{lemma_1st_prelim_res} 
 Let Assumptions \ref{smooth_SPSD} and \ref{smooth_deltak_lt1} hold, and let $w \in V_k$.
 Then,
\begin{align}
 a((I - P_{k-1})w , w) \leq \Big( \, \dfrac{1}{1-\delta_{k}} \, \Big)a((I-S_k)w, w) \,.
\end{align}
\end{lemma}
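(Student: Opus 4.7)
My plan is to reduce the left-hand side to the squared energy norm of $(I-P_{k-1})w$ using the $a$-orthogonality of $P_{k-1}$, dominate it by $\|w\|_E^2$, and then convert this into the $(I-S_k)$-quadratic form via Assumption \ref{smooth_deltak_lt1}. No lemma from earlier (monotonicity of powers of $S_k$ or the smoothing property itself) is needed; the statement is a purely ``approximation-side'' result that rests only on the projection structure of $P_{k-1}$ and the spectral bound on $S_k$.

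First I would write
\begin{equation*}
a((I-P_{k-1})w, w) = a((I-P_{k-1})w, (I-P_{k-1})w) + a((I-P_{k-1})w, P_{k-1}w).
\end{equation*}
The second term vanishes because $(I-P_{k-1})w$ is $a$-orthogonal to $V_{k-1}$ and $P_{k-1}w \in V_{k-1}$, so the left-hand side equals $\|(I-P_{k-1})w\|_E^2$. Next, the $a$-orthogonal decomposition $w = P_{k-1}w + (I-P_{k-1})w$ gives $\|w\|_E^2 = \|P_{k-1}w\|_E^2 + \|(I-P_{k-1})w\|_E^2$, hence the elementary bound $\|(I-P_{k-1})w\|_E^2 \leq \|w\|_E^2 = a(w,w)$.

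The last step is to extract the factor $1/(1-\delta_k)$. Assumption \ref{smooth_deltak_lt1} yields $a(S_k w, w) \leq \delta_k a(w,w)$, which rearranges to
\begin{equation*}
(1-\delta_k)\, a(w,w) \leq a((I-S_k)w, w),
\end{equation*}
so $a(w,w) \leq \tfrac{1}{1-\delta_k}\, a((I-S_k)w, w)$. Chaining the three inequalities produces exactly the claimed bound.

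There is no real obstacle in this argument: the main conceptual point is simply observing that the bound on $(I-P_{k-1})w$ can be loosened to a bound on $w$ itself thanks to $P_{k-1}$ being a projection in the energy inner product, after which Assumption \ref{smooth_deltak_lt1} does all the work. The only thing to be careful about is that Assumption \ref{smooth_deltak_lt1} is stated with a strict inequality $\delta_k < 1$, so division by $1-\delta_k$ is legitimate.
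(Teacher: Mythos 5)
Your proof is correct and is essentially the same argument as the paper's: both rest on the $a$-orthogonal decomposition $w = P_{k-1}w + (I-P_{k-1})w$ (used to identify $a((I-P_{k-1})w,w)$ with $\|(I-P_{k-1})w\|_E^2$ and to bound it by $a(w,w)$ via the Pythagorean identity) followed by Assumption \ref{smooth_deltak_lt1}. The paper merely writes the same chain of inequalities starting from the right-hand side instead of the left.
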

\begin{proof}
Let $ y = (I - P_{k-1}) w $. Note that from the definition of $P_{k-1}$
and from the nestedness of the spaces \eqref{mg_spaces} 
we have that 
\begin{align} \label{UorthPW}
a(y , P_{k-1}w) = 0  \,.
\end{align}
By Assumption \ref{smooth_deltak_lt1} we have
\begin{align*}
 a((I-S_k)w , w) &\geq (1 - \delta_{k}) \, \,  a(w , w)  \\
 &= (1 - \delta_{k}) \, \, a(y + P_{k-1}w , y + P_{k-1}w) \\
 &= (1 - \delta_{k}) \, \, ( a(y , y) + a(P_{k-1}w , P_{k-1}w)) \quad \text{(by \eqref{UorthPW})} \\
 &\geq (1 - \delta_{k}) \, \, a((I-P_{k-1})w , (I-P_{k-1})w) \\
  &= (1 - \delta_{k}) \, \, a((I-P_{k-1})w , w) \qquad \qquad \quad \, \, \mbox{(by \eqref{UorthPW})} \,.
\end{align*}
\end{proof}
Notice that in this setting the approximation property given by Lemma \ref{lemma_1st_prelim_res} 
is dependent on the smoothing property. 
 This is in contrast with other analyses of multigrid methods
 in which the smoothing and the approximation properties are derived independently \cite{Brenner}.
As a consequence of the approximation property, we have the following result.
\begin{lemma} \label{lemma_2nd_prelim_res}  
 Let Assumptions \ref{smooth_SPSD} and \ref{smooth_deltak_lt1} hold.
 Let $v \in V_k$. Then,
\begin{equation}
 a((I-P_{k-1})S_k^{m_k}v , (I-P_{k-1})S_k^{m_k}v)  
   \leq   \Big( \, \dfrac{1}{1-\delta_{k}} \, \Big)\dfrac{1}{2m_k} \, \, a((I -S_k^{2m_k})v , v) \,. 
\end{equation}
\end{lemma}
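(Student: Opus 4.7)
The plan is to combine the approximation property (Lemma \ref{lemma_1st_prelim_res}) applied to the smoothed vector with the smoothing property (Lemma \ref{lemma_smoothing_prop}), using the $a$-self-adjointness of the projector $I-P_{k-1}$ and of the powers $S_k^{m_k}$ to convert the squared term on the left-hand side into an inner product form.

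First I would set $w = S_k^{m_k} v$ and observe that $I-P_{k-1}$ is an $a$-orthogonal projection onto the $a$-orthogonal complement of $V_{k-1}$ in $V_k$, so it is $a$-self-adjoint and idempotent. Hence
\begin{equation*}
a((I-P_{k-1})w,(I-P_{k-1})w) = a((I-P_{k-1})^2 w, w) = a((I-P_{k-1})w, w).
\end{equation*}
Applying Lemma \ref{lemma_1st_prelim_res} to $w$ then gives
\begin{equation*}
a((I-P_{k-1})S_k^{m_k} v,(I-P_{k-1})S_k^{m_k} v) \leq \frac{1}{1-\delta_k}\, a((I-S_k)S_k^{m_k} v, S_k^{m_k} v).
\end{equation*}

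Next I would use Assumption \ref{smooth_SPSD} to move one factor of $S_k^{m_k}$ across the $a$-inner product. Since $S_k$ is $a$-symmetric, so is $S_k^{m_k}$, and it commutes with $I-S_k$. Therefore
\begin{equation*}
a((I-S_k)S_k^{m_k} v, S_k^{m_k} v) = a(S_k^{m_k}(I-S_k)S_k^{m_k} v, v) = a((I-S_k) S_k^{2 m_k} v, v).
\end{equation*}
Finally I would apply the smoothing property (Lemma \ref{lemma_smoothing_prop}) to obtain
\begin{equation*}
a((I-S_k) S_k^{2 m_k} v, v) \leq \frac{1}{2 m_k}\, a((I - S_k^{2 m_k}) v, v),
\end{equation*}
and combine the three displayed inequalities to conclude.

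There is no real obstacle here: the estimate is essentially a three-line chaining. The only subtlety is ensuring the $a$-self-adjointness manipulations are rigorous, that is, recognizing that $I-P_{k-1}$ is an $a$-orthogonal projection (using the nestedness \eqref{mg_spaces} and the defining property of $P_{k-1}$) so that the squared-norm left-hand side collapses to a single inner product, and verifying that the symmetric positive semidefinite root arguments used in Lemma \ref{lemma_monot_Sk_1} guarantee symmetry of $S_k^{m_k}$, which is what allows the transfer step to produce the $2 m_k$-th power needed to invoke Lemma \ref{lemma_smoothing_prop}.
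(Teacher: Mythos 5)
Your proposal is correct and follows essentially the same chain as the paper's proof: collapse the squared term via the $a$-orthogonality of $I-P_{k-1}$, apply the approximation property to $w=S_k^{m_k}v$, transfer $S_k^{m_k}$ across the inner product by $a$-symmetry, and finish with the smoothing property. The paper justifies the first step directly from the defining relation $a((I-P_{k-1})u,w)=0$ for $w\in V_{k-1}$ rather than invoking idempotence, but this is only a cosmetic difference.
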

\begin{proof}
  We have
\begin{align*}\label{2ndprelim}
 &a((I-P_{k-1})S_k^{m_k}v , (I-P_{k-1})S_k^{m_k}v) \\
 \qquad & = a((I-P_{k-1})S_k^{m_k}v , S_k^{m_k}v) \qquad \qquad \mbox{(from the definition of $P_{k-1}$)}  \\
 \qquad & \leq \Big( \, \dfrac{1}{1-\delta_{k}} \, \Big) \, a((I-S_{k})S_k^{m_k}v , S_k^{m_k}v) \quad \mbox{(by Lemma \ref{lemma_1st_prelim_res})} \\
 \qquad & = \Big( \, \dfrac{1}{1-\delta_{k}} \, \Big) \, a((I-S_{k})S_k^{2m_k}v , v)  \\
 \qquad & \leq  \Big( \, \dfrac{1}{1-\delta_{k}} \, \Big) \dfrac{1}{2 m_k} a ((I - S_k ^{2 m_k})v , v) \quad \mbox{(by Lemma \ref{lemma_smoothing_prop})} \,. 
\end{align*}
\end{proof}

\subsubsection{Error bound}

We are now in a position to obtain a bound on the multigrid error operator $E_J$ that gives convergence. 
\begin{theorem} \label{mg_conv}
 Let Assumptions \ref{smooth_SPSD}, \ref{smooth_deltak_lt1} and \ref{smooth_psik_nonincr} hold.
 For $ k = 0, 1, \ldots, J $ let 
 \begin{equation} \label{gamma_k}
\gamma_{k} = \dfrac{1}{1 + 2 \, m_k (1 - \delta_{k})}. 
 \end{equation}
 Then, if $v \in V_J$,
\begin{align}
 a(E_J v , v) \leq \, \gamma_{J} \, a(v ,v).
\end{align}
\end{theorem}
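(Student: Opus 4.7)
The plan is to prove the stronger statement $a(E_k v, v) \leq \gamma_k \, a(v,v)$ for all $k = 0, 1, \ldots, J$ and $v \in V_k$, by induction on $k$. The base case is immediate since $E_0 = 0$.

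For the inductive step, my first move is to expand the recursive definition of $E_k$, use the symmetry of $S_k^{m_k}$ with respect to $a(\cdot,\cdot)$ (from Assumption \ref{smooth_SPSD}) to transfer one factor of $S_k^{m_k}$ onto $v$, and apply the $a$-orthogonality of $P_{k-1}$ to collapse the cross term. Setting $u = S_k^{m_k} v$ and $p = P_{k-1} u$, this should produce the identity
\begin{equation*}
a(E_k v, v) = a(u,u) - a((I-E_{k-1}) p, p).
\end{equation*}
At this point I would apply the inductive hypothesis, and this is where Assumption \ref{smooth_psik_nonincr} enters essentially. Since $\psi_k$ is non-increasing one has $\gamma_{k-1} \leq \gamma_k$, so the hypothesis can be weakened to $a(E_{k-1}p, p) \leq \gamma_k a(p,p)$, which gives the lower bound $a((I-E_{k-1})p, p) \geq (1-\gamma_k) a(p,p)$. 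Combined with the $a$-orthogonal splitting $a(p,p) = a(u,u) - a((I-P_{k-1})u, (I-P_{k-1})u)$, this leads to
\begin{equation*}
a(E_k v, v) \leq \gamma_k \, a(u,u) + (1-\gamma_k) \, a((I-P_{k-1})u, (I-P_{k-1})u).
\end{equation*}

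To finish, I would apply Lemma \ref{lemma_2nd_prelim_res} to obtain $a((I-P_{k-1})u, (I-P_{k-1})u) \leq \tfrac{1}{2\psi_k} a((I-S_k^{2m_k})v, v)$, use the symmetry identity $a(u,u) = a(S_k^{2m_k} v, v)$, and invoke the algebraic fact $(1-\gamma_k)/(2\psi_k) = \gamma_k$, which is a direct rearrangement of $\gamma_k = 1/(1 + 2\psi_k)$. The two terms then combine into $\gamma_k \bigl[ a(S_k^{2m_k}v,v) + a((I-S_k^{2m_k})v,v) \bigr] = \gamma_k a(v,v)$, closing the induction.

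The main obstacle is conceptual rather than computational: one must realize that the induction should be closed using $\gamma_k$ and not the sharper $\gamma_{k-1}$ that the hypothesis provides. A direct substitution with $\gamma_{k-1}$ produces a coefficient $(1-\gamma_{k-1})/(2\psi_k)$ which can strictly exceed $\gamma_k$ when $\gamma_{k-1} < \gamma_k$, and then the recursion would fail to propagate. Assumption \ref{smooth_psik_nonincr} is precisely what permits this downgrade and guarantees that the bound survives through all levels up to $J$.
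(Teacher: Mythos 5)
Your proposal is correct and follows essentially the same route as the paper's proof: induction on the level, the $a$-orthogonal splitting via $P_{k-1}$, the upgrade from $\gamma_{k-1}$ to $\gamma_k$ permitted by Assumption \ref{smooth_psik_nonincr}, Lemma \ref{lemma_2nd_prelim_res}, and the identity $(1-\gamma_k)/(2m_k(1-\delta_k)) = \gamma_k$. Your closing remark about why the induction must be closed with $\gamma_k$ rather than $\gamma_{k-1}$ is exactly the role that inequality \eqref{gamma} plays in the paper's argument.
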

\begin{proof}
By Assumption \ref{smooth_psik_nonincr}, we have
\begin{equation}\label{gamma}
\gamma_{k-1} \leq \gamma_k \,.
\end{equation}
The proof will be done by induction as in \cite{Brenner}.
For $k=0$, $E_0 = 0$ so the result is obvious. By induction assume that
$$a(E_{J-1} v , v) \leq \, \gamma_{J-1} \, a(v ,v) \qquad \forall v \in V_{J-1}\,.$$
Now consider $v \in V_J$, then
\begin{align*}
a(E_J v , v) &= a(S_J^{m_J} v , S_J^{m_J} v) - a(P_{J-1}S_J^{m_J}v , P_{J-1}S_J^{m_J}v)  \\
 & \quad + \, a(E_{J-1}P_{J-1}S_{J}^{m_J} v, P_{J-1}S_J^{m_J}v)  \\
&= a((I - P_{J-1})S_J^{m_J}v , (I - P_{J-1})S_J^{m_J}v)  \\
 & \quad + \, a(E_{J-1}P_{J-1}S_{J}^{m_J} v, P_{J-1}S_J^{m_J}v) \qquad \qquad \, \,  \mbox{(by definition of $P_{J-1}$)}   \\
& \leq  a((I - P_{J-1})S_J^{m_J}v , (I - P_{J-1})S_J^{m_J}v)  \\
& \quad  +  \, \gamma_{J-1} \, a(P_{J-1}S_{J}^{m_J} v, P_{J-1}S_J^{m_J}v) \quad \mbox{(by the induction assumption)}  \\
&\leq a((I - P_{J-1})S_J^{m_J}v , (I - P_{J-1})S_J^{m_J}v)  \\
& \quad  +  \, \gamma_{J} \, a(P_{J-1}S_{J}^{m_J} v, P_{J-1}S_J^{m_J}v) \qquad \qquad \qquad \qquad \qquad \quad \mbox{(by \eqref{gamma})}  \\
&=  (1-\gamma_J) \, a((I - P_{J-1})S_J^{m_J}v , (I - P_{J-1})S_J^{m_J}v)  \\
& \quad + \, \gamma_J \, a((I - P_{J-1})S_J^{m_J}v , (I - P_{J-1})S_J^{m_J}v)  \\
& \quad  +  \, \gamma_{J} \, a(P_{J-1}S_{J}^{m_J} v, P_{J-1}S_J^{m_J}v)  \\
&=  \, (1-\gamma_J) \, a((I - P_{J-1})S_J^{m_J}v , (I - P_{J-1})S_J^{m_J}v)  
\\
& \quad  +  \, \gamma_{J} \, a(S_{J}^{m_J} v, S_J^{m_J}v) \qquad \qquad \qquad \qquad \qquad \mbox{(by definition of $P_{J-1}$)}  \\
 \\
&\leq  \Big( \, \dfrac{1}{2 m_J \, (1-\delta_{J}) } \, \Big) \, (1-\gamma_J) \, a((I - S_J^{2m_J})v ,v)  \\
 \\
& \quad  +  \, \gamma_{J} \, a(S_{J}^{m_J} v, S_J^{m_J}v)\qquad \qquad \qquad \mbox{(by Lemma \ref{lemma_2nd_prelim_res})}   \\ 
 \\
&= \gamma_J \, a((I - S_J^{2m_J})v ,v) + \gamma_{J} \, a(S_{J}^{m_J} v, S_J^{m_J}v)  \\
&= \gamma_J \, a(v,v) - \gamma_J \, a(S_J^{m_J}v , S_J^{m_J}v) + \gamma_J \, a(S_J^{m_J}v, S_J^{m_J}v)  \\
&= \gamma_J \, a(v,v)  
\end{align*}
\end{proof}

It is evident that the convergence of the multigrid algorithm
is dependent on Assumption \ref{smooth_psik_nonincr},
which lies 
both on the number of smoothing steps $ m_k $ 
 and on the constants $ \delta_k $ of the smoothing error operator, jointly.
 No other parameters affect the convergence rate.
 Since the behavior of $ \delta_k$ is determined by the choice of $ S_k $,
 different choices on $m_k$ can be taken subsequently so that \eqref{gamma} holds.
 For instance, if the $ S_k $ are such that $ \delta_k $ is non-decreasing, 
 then it is sufficient to take $ m_k $ to be non-increasing. 
 In fact, if $ \delta_k \geq \delta_{k-1} $ and $ m_k \leq m_{k-1} $,
 Assumption \ref{smooth_psik_nonincr} holds since 
 \begin{equation}
    m_k (1-\delta_{k}) \leq m_{k-1} (1-\delta_{k-1})\,.
 \end{equation}
 Although sufficient,  a non-increasing $ m_k $ is not necessary.
Also, observe that a particular case of non-increasing $ m_k$ is 
 $ m_1 = m_2 = \dots = m_J = m $.
In this case it is directly visible how an increasing $ m $ can lead to better convergence rates,
as well as an increasing number of multilevel spaces $J$ can lead to worse convergence.
While this last situation was shown in \cite{bramble1991convergence},
to the best of our knowledge the first feature 
was not shown in similar multigrid frameworks without, or with minimal regularity assumptions 
(see, e.g., \cite{bramble1991convergence, chen2012optimal, brenner2002convergence}).

In Section \ref{sec_smoother}  
a characterization of $ \delta_k $ will be given
when the smoothing process is chosen to be a successive subspace correction algorithm. 
From this characterization, proper choices of the number of smoothing steps $ m_k $ 
can lead to convergence and, in addition, to optimal (i.e., with a value of $ \gamma_J $ that is independent of $J$) 
multigrid error bounds.
 Since $ m_k \in \mathbb{N} $ and $ \delta_k \in \mathbb{R} $,
 our analysis suggests that the determination of an optimal multigrid error bound 
 may take place by a proper choice of $ m_k $
 if and only if the quantity $(1-\delta_k)$ is inversely proportional  
 to an integer-valued function of $ k $.
 The achievement of an optimal convergence bound 
 seems otherwise impossible, unless a radically new setup of the multigrid algorithm is formulated
 that is oriented to that purpose.


\section{Successive Subspace Correction (SSC) algorithms}
\label{sec_smoother}


Now we describe the Successive Subspace Correction (SSC) algorithm.
The SSC algorithm is an iterative method to approximate the solution of SPD linear systems \cite{xu1992iterative}.
\giachi{We link the multigrid convergence theory of the previous section with the SSC theory by using
smoothers of subspace correction type for the multigrid algorithm \ref{alg_Vcycle}.}
The SSC algorithm yields an approximate solution to \eqref{lin_sys} 
and is based on a decomposition of the finite-dimensional space as an algebraic sum of subspaces.
In the multigrid algorithm presented above, smoothing is performed at each level $k = 1, ... , J$, 
therefore we decompose each $V_k$ using subspaces $ V_k^i \subset V_k $ such that
\begin{align} \label{decomp_SSC}
 V_k = \sum_{i=0}^{p_k} V_k^i = \Big \{ v \, \, | \, \,  v = \sum_{i=0}^{p_k} v_k^i \, , \, v_k^i \in V_k^i \Big \} \,. 
 \end{align}
Notice that the number of subspaces $p_k$ is in general different for each level.
In order to present the algorithm, we first define  for all $i$,
 with $ u \in V_k $ and $ u_k^i , v_k^i \in V_k^i $, the operators
\begin{align*}
& Q_k^i : V_k \rightarrow V_k^i\,, \quad P_k^i: V_k \rightarrow V_k^i \,, \quad A_k^i : V_k^i \rightarrow V_k^i  \,,  \\
& (Q_k^i u , v_k^i) = (u , v_k^i), \quad a(P_k^i u , v_k^i) = a(u , v_k^i) \,, \\
& (A_k^i u_k^i , v_k^i) = (A_k u_k^i , v_k^i) \,.
\end{align*}
It follows from its definition that $A_k^i$ is an SPD operator.
Moreover, as a consequence of the above definitions we have that 
\begin{align}\label{apqak}
 A_k^i P_k^i = Q_k^i A_k\,.
\end{align}
Hence if $u_k$ is the exact solution of \eqref{lin_sys}, then $P_k^i u_k = u_k^i$ will be the solution of 
\begin{align}\label{discprobsubsp}
 A_k^i u_k^i = f_k^i,
\end{align}
where $f_k^i = Q_k^i f_k$.
Equation \eqref{discprobsubsp} is in general solved approximately,
therefore we introduce for all $i$ the operators
$$ R_k^i : V_k^i \rightarrow V_k^i\,, \quad T_k^i : V_k \rightarrow V^i_k \,, \quad T_k^i := R_k^i Q_k^i A_k = R_k^i A_k^i P_k^i \,. $$ 
The operators $R_k^i$ act as approximate inverses of $A_k^i$. 
If $R_k^i$ is taken to be an exact solver then $T_k^i = P_k^i$. 
When no confusion arises, we drop the subscript $k$ for $p_k$ 
as well as for the operators $Q_k^i$, $P_k^i$, $A_k^i$, $R_k^i$ and $T^i_k$.
We now define the SSC algorithm.
\begin{algorithm}[Successive Subspace Correction Algorithm.] \label{alg_ssc}
Let $z^0 \in V_k$ be given.
Then $z^{\alpha+1}$ is obtained in $p+1$ substeps starting from $z^{\alpha}$ by
\begin{align}
 z^{\alpha + 1 - \frac{i}{p+1}} = z^{\alpha + 1 - \frac{i+1}{p+1}} + R^i Q^i (f_k -A_k z^{\alpha+ 1 -\frac{i+1}{p+1}}) ,
\end{align}
for $i = p , \dots , 0$.
\end{algorithm}
The error operator associated to this algorithm is denoted as $\widehat{E}_p $.
If $u_k$ is the exact solution of \eqref{lin_sys}, then for $i = p , \dots , 0$ we have
$$ (u_k - z^{\alpha + \frac{p+1-i}{p+1}}) = (I - T^i) (u_k - z^{\alpha+ \frac{p-i}{p+1}}) \,.$$
This yields
\begin{align}
 (z_k - z^{\alpha + 1}) & = \widehat{E}_p (z_k - z^{\alpha})\,, \\
  \widehat{E}_p  & = (I - T^0) ( I - T^1) \cdots (I - T^p) \,.
 \end{align}
 The symmetric version of the SSC algorithm is given here.
\begin{algorithm}[Symmetric Successive Subspace Correction Algorithm.] \label{alg_sym_ssc}
Let $z^0 \in V_k$ be given.
First, $z^{\alpha + \frac{1}{2}}$ is obtained in $p+1$ substeps starting from $z^{\alpha}$ by
\begin{align}
 z^{\alpha + \frac{1}{2} - \frac{i}{2(p+1)}} = z^{\alpha +\frac{1}{2} - \frac{i+1}{2(p+1)}} + R^iQ^i(f_k - A_kz^{\alpha  +\frac{1}{2} - \frac{i+1}{2(p+1)} }) ,
\end{align}
for $i = p , \dots , 0.$
Then, $z^{\alpha+1}$ is obtained in $p+1$ substeps starting from $z^{\alpha+\frac{1}{2}}$ by
\begin{align}
 z^{\alpha + \frac{1}{2} + \frac{(i+1)}{2(p+1)}} = z^{\alpha + \frac{1}{2} + \frac{i}{2(p+1)}} + R^iQ^i(f_k -A_kz^{\alpha + \frac{1}{2} + \frac{i}{2(p+1)} }) ,
\end{align}
for $i = 0, \dots , p.$
\end{algorithm}
The error operator of this algorithm is denoted as $\widehat{E}_p^s $ and is given by
\begin{align}
 \widehat{E}^s_p = (I - T^p) \cdots ( I - T^1)(I - T^0)^2( I - T^1) \cdots (I - T^p) \,.
\end{align}
Because of the symmetry requirements for the smoother in Assumption \ref{smooth_SPSD},
we will fit the smoother within the symmetric SSC framework.

\subsection{Convergence analysis}

We recall the main convergence result about the SSC algorithm, 
whose proof can be found in \cite{xu1992iterative}.
First, we introduce sufficient assumptions. 
\begin{assumption}[Bound on $ w_1 $] \label{RiSPD_w1lt2}
 The  operators $R^i$ are SPD with respect to $ (\cdot,\cdot) $ and satisfy $ w_1 < 2 $, 
where $ w_1 = \max\limits_{i=0, \dots ,p} \rho(R^i A^i) $,
$ \rho(R^i A^i) $ being the spectral radius of $ R^i A^i $
and $p$ being the number of subspaces in the decomposition \eqref{decomp_SSC}.
\end{assumption}
\begin{assumption}[Existence of $ K_0 $] \label{exist_K0}
 There exists  $K_0$ such that for any $ v \in V_k $ there exists a decomposition
$ v = \sum \limits_{i=0}^p v_i $, with the property
\begin{align} \label{Ri_inv}
 \sum_{i=0}^p ( (R^i)^{-1} v_i , v_i) \leq K_0 \, (A_k v , v) \,.
\end{align}
\end{assumption}
\begin{assumption}[Existence of $ K_1 $] \label{exist_K1}
 Given the same $p$ as in Assumption \ref{exist_K0}, 
 there exists  $ K_1 $ such that 
 for any $S \subset \{0,1, \dots , p\} \, \times \, \{0,1, \dots , p\}$ and $ u_i , v_i \in V_k $ for 
$ i = 0,1, \dots , p $ we have
\begin{align} \label{ineq_K1}
 \sum_{(i,j)\in S} | a(T^i u_i , T^j v_j ) |
 \leq K_1 \, \Big( \sum_{i=0}^p a(T^i u_i , u_i ) \Big)^{\frac{1}{2}} \, \Big( \sum_{j=0}^p a(T^j v_j , v_j ) \Big)^{\frac{1}{2}} \,.
\end{align}
\end{assumption}
%
Notice that all assumptions are related to the choice of the operators $ R^i $.
Assumption \ref{exist_K1} involves only functions in $ V_k $, without using the decomposition in Assumption \ref{exist_K0}.
\giachi{We remark that the absolute value in Equation \eqref{ineq_K1} is sufficient but not necessary for convergence, see \cite{xu1992iterative}.}
Due to Assumption \ref{RiSPD_w1lt2}, $ R^i $ is invertible so that \eqref{Ri_inv} is well-defined.
Also, recall the following property.
\begin{lemma} \label{lemma_Ti}
Let Assumption \ref{RiSPD_w1lt2} hold.
The operator $ T^i $ is symmetric and positive semi-definite with respect to $ a(\cdot , \cdot ) $.
\end{lemma}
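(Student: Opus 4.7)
The plan is to prove both symmetry and positive semi-definiteness by exploiting three facts in sequence: first, that $T^i u \in V_k^i$ always, so $P^i$ can be inserted freely on the right; second, that on $V_k^i$ the bilinear form $a(\cdot,\cdot)$ agrees with $(A^i\,\cdot,\cdot)$; and third, that $R^i$ and $A^i$ are both symmetric with respect to $(\cdot,\cdot)$.

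For symmetry, I would start from $a(T^i u, v) = a(R^i A^i P^i u, v)$. Since $R^i A^i P^i u \in V_k^i$, the Galerkin identity $a(v - P^i v, w) = 0$ for $w \in V_k^i$ allows me to replace $v$ by $P^i v$, giving $a(T^i u, v) = a(R^i A^i P^i u, P^i v)$. Now both arguments lie in $V_k^i$, so this equals $(A^i R^i A^i P^i u, P^i v)$. Using symmetry of $A^i$ with respect to $(\cdot,\cdot)$, this becomes $(R^i A^i P^i u, A^i P^i v)$, and symmetry of $R^i$ with respect to $(\cdot,\cdot)$ (from Assumption \ref{RiSPD_w1lt2}) lets me transfer $R^i$ to the other side, yielding $(A^i P^i u, R^i A^i P^i v)$. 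Reversing the same sequence of steps with the roles of $u$ and $v$ swapped produces $a(u, T^i v)$.

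For positive semi-definiteness, I would set $v = u$ in the chain above and read off $a(T^i u, u) = (R^i A^i P^i u, A^i P^i u)$. Since $R^i$ is SPD with respect to $(\cdot,\cdot)$, the right-hand side is nonnegative (in fact strictly positive unless $A^i P^i u = 0$, i.e., $P^i u = 0$), giving the desired semi-definiteness.

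There is no real obstacle: the proof is essentially a bookkeeping exercise moving the projections $P^i$ into and out of the inner products via the Galerkin orthogonality, combined with the definitions of $A^i$ and the assumed symmetry of $R^i$. The only point that requires a little care is remembering to switch between $a(\cdot,\cdot)$ on $V_k$ and $(A^i\,\cdot,\cdot)$ on $V_k^i$ at the right moments, but this is entirely routine.
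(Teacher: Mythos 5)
Your proof is correct and follows essentially the same argument as the paper's: both shuffle the symmetric operators across the inner product and use an orthogonal projection to confine everything to $V_k^i$, then read off positive semi-definiteness from the SPD property of $R^i$. The only cosmetic difference is that you work with the factorization $T^i = R^i A^i P^i$ and the $a(\cdot,\cdot)$-orthogonality of $P^i$, whereas the paper uses $T^i = R^i Q^i A_k$ and the $(\cdot,\cdot)$-orthogonality of $Q^i$; the two computations are exact duals of one another.
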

\begin{proof}
Let $ u $ and $ v $ be in $ V_k $. 
Using the fact that
$ A_k $ is symmetric with respect to $ (\cdot,\cdot) $ in $ V_k $,
$ R_k^i $ is symmetric with respect to $ (\cdot,\cdot) $ in $ V_k^i $,
together with the definition of $ Q_k^i $, we have
\begin{align*}
 a(T_k^i u, v) & = a(R_k^i\,Q_k^i\,A_k\,u, v) = (R_k^i\,Q_k^i\,A_k\,u ,  A_k\,v) \\
 & = (R_k^i \, Q_k^i \, A_k\,u, Q_k^i\,A_k\,v) = (Q_k^i\,A_k\,u, R_k^i \,Q_k^i\,A_k\,v) \\
 & = (Q_k^i\,A_k\,u, T_k^i\,v) = (A_k\,u, T_k^i\,v) = a(u, T_k^i\,v).
\end{align*}
This shows $T_k^i$ is symmetric with respect to $a(\cdot,\cdot)$.
To see that it is also positive semi-definite, we use the same properties as above and get
\begin{align*}
 a(T_k^i u, u) = (R_k^i Q_k^i\,A_k\,u , A_k\,u)  =   (R_k^i Q_k^i\,A_k\,u, \,Q_k^i\,A_k\,u).
\end{align*}
Since $R_k^i$ is SPD with respect to $(\cdot,\cdot)$ by Assumption \ref{RiSPD_w1lt2}, the result follows.
\end{proof}
By the symmetry of $ T^i $ with respect to $ a(\cdot,\cdot)$,
$ I - T^i $ is symmetric with respect to  $ a(\cdot,\cdot)$.
Hence $ \widehat{E}_p^* = (I - T^p) \cdots ( I - T^1)(I - T^0) $ is
the adjoint of $ \widehat{E}_p $ with respect to $ a(\cdot,\cdot)$,
so that 
\begin{equation} \label{sym_nonsym}
\widehat{E}_p^s = \widehat{E}_p^* \widehat{E}_p.
\end{equation}
Thus, we have 
\begin{align}\label{energynormEhat}
 a(\widehat{E}_p^s v , v) = || \widehat{E}_p v||_E^2 \quad \forall  v \in V_k. 
\end{align}

We now state the convergence result.
\begin{theorem}
Let Assumptions \ref{RiSPD_w1lt2}, \ref{exist_K0} and \ref{exist_K1}  hold.
Then, we have
\begin{align}\label{inequalityEhat}
|| \widehat{E}_p ||_E^2 \leq 1 - \dfrac{2 - w_1}{K_0 \, (1 + K_1)^2} \,,
 \end{align}
where $w_1$ was defined in Assumption \ref{RiSPD_w1lt2} and $K_0$ and $K_1$
are constants related to the ones in Assumptions \ref{exist_K0} and \ref{exist_K1}.
\end{theorem}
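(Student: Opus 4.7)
The plan is to establish the energy-reduction inequality
\[
||v||_E^2 - ||\widehat{E}_p v||_E^2 \geq \frac{2 - w_1}{K_0 (1 + K_1)^2}\, ||v||_E^2,
\]
after which \eqref{inequalityEhat} is immediate via \eqref{energynormEhat}. I would proceed in two stages, combining a telescoping energy identity for the successive iterates with the space decomposition supplied by Assumptions \ref{exist_K0}--\ref{exist_K1}.

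\textbf{Stage 1: telescoping energy identity.} I would introduce the intermediate errors $v^{p+1} = v$ and $v^{i} = (I - T^{i}) v^{i+1}$ for $i = p, p-1, \ldots, 0$, so that $v^{0} = \widehat{E}_p v$. Expanding $a((I-T^i) v^{i+1}, (I-T^i) v^{i+1})$ and using that $T^i$ is $a$-self-adjoint (Lemma \ref{lemma_Ti}) yields
\[
||v^{i+1}||_E^2 - ||v^{i}||_E^2 = 2\, a(T^{i} v^{i+1}, v^{i+1}) - a(T^{i} v^{i+1}, T^{i} v^{i+1}).
\]
Because $T^i$ restricted to $V_k^{i}$ is $a$-self-adjoint and positive with spectrum coinciding with that of $R^i A^i$, Assumption \ref{RiSPD_w1lt2} delivers the spectral estimate $a(T^{i} w, T^{i} w) \leq w_1\, a(T^{i} w, w)$ for every $w \in V_k$. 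Summing over $i = 0, \ldots, p$ then gives
\[
||v||_E^2 - ||\widehat{E}_p v||_E^2 \geq (2 - w_1) \sum_{i=0}^{p} a(T^{i} v^{i+1}, v^{i+1}).
\]

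\textbf{Stage 2: lower bound via decomposition.} To close the argument it remains to show
\[
||v||_E^2 \leq K_0 (1 + K_1)^2 \sum_{i=0}^{p} a(T^{i} v^{i+1}, v^{i+1}).
\]
I would fix a decomposition $v = \sum_i v_i$ provided by Assumption \ref{exist_K0}, expand $||v||_E^2 = \sum_i a(v, v_i)$, and use the complementary telescoping identity $v = v^{i+1} + \sum_{j > i} T^{j} v^{j+1}$ to split each term into a diagonal piece $a(v^{i+1}, v_i)$ and an off-diagonal piece $\sum_{j > i} a(T^{j} v^{j+1}, v_i)$. The diagonal piece is handled by rewriting $a(v^{i+1}, v_i) = ((R^{i})^{-1} T^{i} v^{i+1}, v_i)$, applying Cauchy--Schwarz in the $(R^{i})^{-1}$-inner product, and invoking Assumption \ref{exist_K0}; here one exploits the identity $((R^{i})^{-1} T^{i} v^{i+1}, T^{i} v^{i+1}) = a(T^{i} v^{i+1}, v^{i+1})$. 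For the off-diagonal piece I would set $w_i := (R^{i} A^{i})^{-1} v_i \in V_k^{i}$, so that $v_i = T^{i} w_i$ and $a(T^{i} w_i, w_i) = ((R^{i})^{-1} v_i, v_i)$, and then apply Assumption \ref{exist_K1} with index set $S = \{(i,j) : j > i\}$. Combining the two resulting bounds yields the displayed inequality.

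\textbf{Expected main obstacle.} Stage 1 is essentially a one-shot telescoping argument, so the genuinely delicate step is the off-diagonal estimate in Stage 2: one must recast $a(T^{j} v^{j+1}, v_i)$ into the bilinear form $a(T^{j} u_j, T^{i} w_i)$ demanded by Assumption \ref{exist_K1}, which requires the explicit inversion $w_i = (R^{i} A^{i})^{-1} v_i$ and the verification that $a(T^{i} w_i, w_i)$ collapses to the same $(R^{i})^{-1}$-seminorm of $v_i$ that appears in Assumption \ref{exist_K0}. This cancellation is precisely what allows the $K_0$ and $K_1$ factors to assemble into $K_0 (1 + K_1)^2$; without it one would be left with an extraneous conditioning factor polluting the bound.
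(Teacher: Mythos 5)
Your proof is correct and follows the standard successive subspace correction argument of Xu \cite{xu1992iterative}, which is exactly the reference the paper cites in lieu of a proof: the paper's own proof environment contains only that citation plus remarks verifying that the right-hand side of \eqref{inequalityEhat} lies in $(0,1)$. The steps you outline all check out --- the telescoping identity, the spectral bound $a(T^i w, T^i w) \leq w_1\, a(T^i w, w)$ from Assumption \ref{RiSPD_w1lt2}, the splitting of $a(v,v) = \sum_i a(v, v_i)$ into a diagonal part controlled via Assumption \ref{exist_K0} and an off-diagonal part controlled via Assumption \ref{exist_K1}, and the two key identities $((R^i)^{-1} T^i v^{i+1}, T^i v^{i+1}) = a(T^i v^{i+1}, v^{i+1})$ and $a(T^i w_i, w_i) = ((R^i)^{-1} v_i, v_i)$ with $w_i = (R^i A^i)^{-1} v_i$ --- so your reconstruction is essentially the proof the paper is implicitly relying on.
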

\begin{proof}
 See \cite{xu1992iterative} for a proof.
 Let us point out that the quantity in the right-hand side of \eqref{inequalityEhat} 
 has a nonzero denominator, is larger than 0 and less than 1.
 In fact, notice that Assumption \ref{RiSPD_w1lt2} implies that $ (R^i)^{-1} $ is also SPD with respect to $ (\cdot,\cdot) $,
 which, together with the fact that $ A $ is SPD with respect to $ (\cdot,\cdot) $,
 implies $ K_0 > 0$.
 Then,  
 $ \dfrac{2 - w_1}{K_0 \, (1 + K_1)^2} $ is well-defined and 
 by Assumption \ref{RiSPD_w1lt2} we have
 $ \dfrac{2 - w_1}{K_0 \, (1 + K_1)^2} > 0 $.
 Finally, the constant $ K_0 $ can be majorized by another constant 
 such that \eqref{Ri_inv} still holds and $ \dfrac{2 - w_1}{K_0 \, (1 + K_1)^2} < 1 $.
\end{proof}

The convergence of the symmetric version of the SSC algorithm
is then a direct consequence of \eqref{sym_nonsym}.

\begin{remark}
 As we dropped the subscript $k$ to make the notation more readable, we point out
 that the quantities $w_1$, $K_0$ and $K_1$ in general depend on $k$.
 Moreover, also the quantity $p$ in Algorithms \ref{alg_ssc} or \ref{alg_sym_ssc} 
 can be chosen differently for different multigrid levels.
\end{remark}

\subsection{Sufficient conditions for multigrid convergence with smoothers of SSC type}

Our intent is to fit the properties of the SSC smoother to the sufficient conditions needed 
for the convergence of the multigrid algorithm \ref{alg_Vcycle}.
Choosing the symmetric SSC iteration as the smoother for our multigrid algorithm, we then have
\begin{align}
 S_k = \widehat{E}_{p_k}^s \,.
\end{align}
Our purpose is to consider a set of choices of $ V_k $ in the multigrid algorithm 
and of smoothers $ S_k = \widehat{E}_{p_k}^s $ 
for which Assumptions \ref{smooth_SPSD}, \ref{smooth_deltak_lt1} and \ref{smooth_psik_nonincr}
are satisfied. 
First, the existence of a suitable error operator $\widehat{E}_{p_k}^s $ with norm less than one is given
by the fulfillment of Assumptions \ref{RiSPD_w1lt2}, \ref{exist_K0} and \ref{exist_K1}.
Once the existence of this operator is granted, Assumptions \ref{smooth_SPSD} and \ref{smooth_deltak_lt1} are true.

In fact, Assumption  \ref{smooth_SPSD} is a consequence of the fact that $ \widehat{E}_{p_k}^s $
is SPD with respect to $ a(\cdot,\cdot)$.
Concerning Assumption \ref{smooth_deltak_lt1}, 
we set $ \delta_{k} $ as
\begin{align} \label{deltak_choice}
 \delta_{k} = 1 - \dfrac{2 - w_1}{K_0 \, (1 + K_1)^2}\,.
\end{align}
It then follows by \eqref{energynormEhat} and \eqref{inequalityEhat} that 
Assumption \ref{smooth_deltak_lt1} holds.

The only assumption that remains to be checked is Assumption \ref{smooth_psik_nonincr},
which again depends on all the Assumptions \ref{RiSPD_w1lt2}, \ref{exist_K0} and \ref{exist_K1}.
This is due to the fact that $ \delta_k $ in \eqref{deltak_choice} is determined by $w_1 $, $K_0 $ and $K_1$, 
all of which in general depend on $k$. 
Different definitions of $ V_k $ and $ S_k$ correspond to multigrid algorithms 
on different spaces with different subspace correction smoothing schemes.
Some examples will be described in Section \ref{sec_app}.
In these, a verification of Assumptions \ref{RiSPD_w1lt2}, \ref{exist_K0}, \ref{exist_K1}
is provided, 
along with a characterization of the constants $w_1 $, $K_0 $ and $K_1$ in terms of $k$. 
This characterization leads to the identification of the conditions
for Assumption \ref{smooth_psik_nonincr} to hold.
The conditions for the optimality of the multigrid error bound are also determined.


\section{Refinement applications with subspace correction smoothing schemes}
\label{sec_app}

In this section, we apply the multigrid algorithm with SSC-type smoother described earlier,
to applications involving uniform and local refinement and also domain decomposition smoothing.
Multiplicative domain decomposition algorithms can in fact be seen as instances of SSC algorithms \cite{xu1992iterative}.
In the following, we introduce the model problem and its finite element discretization.
The applications of the theory that we consider here differ 
in the definition of $ V_k $, 
in its decomposition into appropriate subspaces $ V^i_k $ 
and in the choice of the operators $ R^i_k $.
We first address a case of uniform refinement with exact subsolvers.
Then, we present two local refinement applications 
that deal with two possible ways of enforcing continuity, 
corresponding to appropriate choices of the subspace decomposition of the multigrid spaces $V_k$. 

\subsection{Model problem and finite element discretization}

To fix the ideas,
let $\Omega$ be a polygonal subset of $\mathbb{R}^n$,
let $ \Theta = (\theta_{ij}) $ be a symmetric matrix
and consider the elliptic boundary value problem
\begin{align*}
 - \sum_{i=1}^{n} \sum_{j=1}^{n}\, \dfrac{\partial}{\partial x_i} \left( \theta_{ij} \dfrac{\partial u}{\partial x_j} \right) = f\quad &\mbox{in} \quad \Omega  \\
 u = 0 \qquad &\mbox{on}  \quad \partial\Omega, 
\end{align*}
then $u$ is a weak solution of the above problem if and only if
\begin{align}
 a(u,v) = (f,v) \quad \mbox{for all} \quad v \in H_0^1(\Omega)
\end{align}
where $ (\cdot, \cdot) $ denotes the $ L^2(\Omega) $ inner product and 
\begin{align}
 a(u,v) = - \sum_{i=1}^{n} \sum_{j=1}^{n} \int_{\Omega}  \theta_{ij} \dfrac{\partial u}{\partial x_j} \dfrac{\partial v}{\partial x_i} \, dx.
\end{align}
We assume that there exist $C_{\Theta,1}$ and $C_{\Theta,2}$ depending on $ \Theta $
for which
\begin{align} \label{bil_equiv_H1} 
 C_{\Theta,1} \| u \|^2_{H_1} \leq a(u,u) \leq C_{\Theta,2} \| u \|^2_{H_1}, \qquad u \in V_k \subset H_0^1(\Omega).
\end{align}
This means that both $(\cdot , \cdot)$ and $a(\cdot, \cdot)$ are SPD bilinear forms on $ V_k $,   
as needed in the previous convergence theory.
Moreover, since the trace of $ V_k $ is zero on the boundary of $\Omega$,
we have that $ a(\cdot , \cdot) $ is also equivalent to $|\cdot|_{H^1(\Omega)}$ on $V_k$
due to the Poincar\'e inequality.

Let $\mathcal{P}_1$ be the space of linear polynomials, 
then the multigrid spaces $V_k$ in \eqref{mg_spaces} 
will be considered to be the finite element spaces 
of continuous piecewise-linear functions
built on triangulations $\mathcal{T}_k$ of $\Omega$,
\begin{equation} \label{Vk}
 V_k = \{ v \in H_0^1(\Omega) : v|_{\tau} \in \mathcal{P}_1, \, \,  \forall \tau \in \mathcal{T}_k \} \quad k=0, \dots, J.
\end{equation}
Such triangulations will be defined 
by using either uniform or local midpoint refinement.
In the case where such refinement procedure is performed only on a subdomain of $\Omega$ (local refinement),
hanging nodes will be introduced in the mesh and the triangulation will be referred to as {\it{irregular}} \giachi{(or {\it non-conforming})}.
Hanging nodes (also called slave nodes by some authors) are vertices of some element $\tau_1 \in \mathcal{T}_k$ 
that lie on the interior of an edge of some
other element $\tau_2 \in \mathcal{T}_k$ \giachi{without being a node for $\tau_2$}. 
A more formal description of hanging nodes can be found in \cite{carstensen2009hanging, heuveline2007inf, fries2011hanging, di2016easy}. 
Continuity constraints can be added in the definition of the finite element spaces 
to make sure that no additional degrees of freedom are introduced for the hanging nodes. 
Therefore for all $ k=0, \dots, J$, $V_k $ has a nodal basis that consists of \giachi{functions associated to} 
all vertices of $\mathcal{T}_k$ excluding the hanging nodes.
\giachi{In practice, a possible way to obtain a continuous nodal basis is given in \cite{fries2011hanging},
where shape functions of elements with hanging nodes in the element corners are modified}.
In \cite{fries2011hanging}, the support of a basis function associated to a regular node $n$ is the union of elements that share this node or the
potential hanging nodes on the edges that have node $n$.
\giachi{We point out that the local refinement applications covered by our theory allow the presence of edges
with an arbitrary number of hanging nodes. Usually, only $1$-irregular meshes are considered \cite{zhao2010adaptive}, namely meshes where at most one hanging node
is allowed on any edge of the triangulation.}

\subsection{Uniform refinement:
overlapping non-nested subdomains, 
subproblems on regular grids 
and exact subsolvers}

We first describe a case of uniform refinement by defining the triangulations and the corresponding subdomains on each of them.
\begin{definition}[Triangulations $\mathcal{T}_k$] \label{def_triang_1}
Let $\mathcal{T}_0$ be a quasi-uniform coarse triangulation of $\Omega$ of size $h_0 \in (0,1]$.
Assume $\mathcal{T}_{k-1}$ has been obtained,
then $\mathcal{T}_k$ is derived from $\mathcal{T}_{k-1}$ by means of midpoint refinement. It follows that the size
$h_k$ of $\mathcal{T}_k$ will be $h_k = 2^{-k} h_0$ and that
$$ \mathcal{T}_0 \subset \mathcal{T}_1 \subset \cdots \mathcal{T}_k,$$
in the sense that any $ \tau \in \mathcal{T}_k$ can be written as the union of elements in $\mathcal{T}_{k+1}$ \cite{Ciarlet:2002:FEM:581834}.
\end{definition}
\begin{definition}[Subdomains $ \widehat{\Omega}^i_k $] \label{def_subd_1}
 Let $\{\Omega^i_k\}_{i=1}^{p_k}$ be a collection of non-overlapping open subdomains of $\Omega$ \giachi{whose boundaries align with the mesh triangulation $\mathcal{T}_k$}, such that
$\overline{\Omega} = \bigcup \limits_{i=1}^{p_k} \overline{\Omega^i_k}$.
For $i=1, \ldots, p_k$, let $\widehat{\Omega}^i_k$  be overlapping subsets of $\Omega$ 
whose boundaries \giachi{still} align with the triangulation and are defined by 
\giachi{\begin{align}
 \widehat{\Omega}^i_k = \{ x \in \Omega \, | \, \mbox{dist}(x, \Omega^i_k) \leq  h_0 \} \, \, .
\end{align}}
\end{definition}
Notice that the number of subdomains $p_k$ varies with the level.
An example of a subdomain described in the above definition is shown in Figure \ref{uniform_case}.
\begin{figure}[h]
\centering
\includegraphics[scale=0.9]{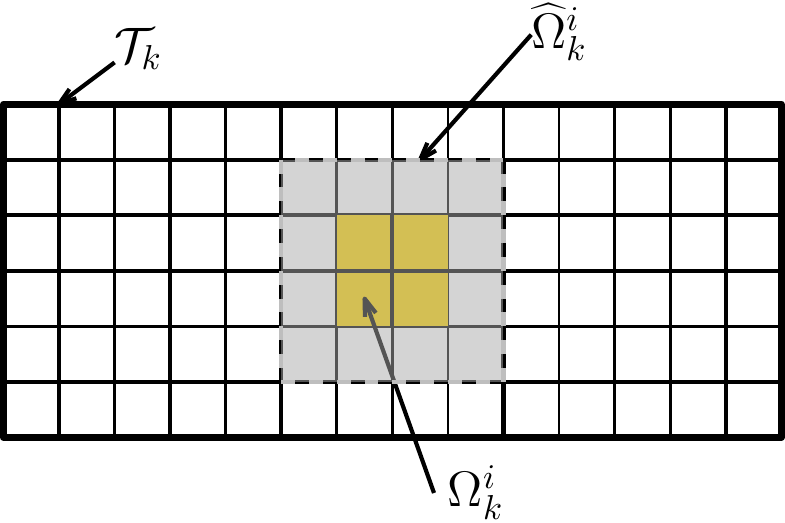}
\caption{ Example of a subdomains involved in the uniform refinement application.}\label{uniform_case}
\end{figure}

For this application, 
the multigrid spaces $V_k$ in \eqref{Vk}, 
the subspaces $ V_k^i $ and 
the subsolvers $ R_k^i $ are defined as follows.
\begin{definition} \label{def_VR_1}
Given the triangulations $\mathcal{T}_k $ in Definition \ref{def_triang_1}
and the overlapping subdomains $\widehat{\Omega}^i_k$ in Definition \ref{def_subd_1}, 
we set for $ k = 0, \ldots, J $ and for $i=0, \ldots, p_k $
\begin{align} \label{spaces_smoother_app1}
\begin{cases}
 V_k \text{ in \eqref{Vk}, built on $\mathcal{T}_k$ as in Definition \ref{def_triang_1}} \,,\\
V^i_k  := 
\begin{cases}
V_0                                                                    & \text{ for } i = 0  \\
 \{v \in V_k \, | \, \, supp(v) \subseteq \widehat{\Omega}^i_k\} \quad & \text{ for } i = 1, \dots, p_k  
\end{cases} \,, \\
 R_k^i  := (A_k^i)^{-1} \,. 
\end{cases}
 \end{align}
\end{definition}
We point out that the $V_k$ defined in \eqref{spaces_smoother_app1} satisfy the nestedness condition \eqref{mg_spaces}.
The following lemma describes a decomposition of $V_k$.
\begin{lemma} \label{app1_Vk}
 Given $V_k$ and $V^i_k$ in Definition \ref{def_VR_1}, we have
\begin{equation*}
V_k = \sum \limits_{i=0}^{p_k}  V^i_k\,.
\end{equation*}
Moreover, if we denote with $ v_i \in V^i_k $ the components of any $v \in V_k$ 
(i.e., such that $v = \sum\limits_{i=0}^{p_k} v_i$),
 there is a constant $C_0$ independent of $h_0$, $h_k$ and $p_k$ such that  
\begin{equation} \label{ineq_C_0}
 \sum\limits_{i=0}^{p_k} a(v_i , v_i) \leq C_0 \, a(v,v)  \quad \forall v \in V_k\,.
\end{equation}
\end{lemma}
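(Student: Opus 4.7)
The plan is to adapt the classical overlapping Schwarz stable-decomposition argument. The key structural features to exploit are: (i) the overlap size of the subdomains $\widehat{\Omega}^i_k$ is $h_0$, which is independent of $k$; (ii) the coarse space $V_0$ is included as $V^0_k$; (iii) the covering $\{\widehat{\Omega}^i_k\}$ has bounded local overlap, i.e., each point of $\Omega$ lies in a uniformly bounded number $N_c$ of subdomains, independent of $k$ and $p_k$.

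First I would construct the decomposition explicitly. Given $v \in V_k$, let $v_0 \in V_0$ be a Scott--Zhang (or Cl\'ement) quasi-interpolant of $v$ onto the coarse space, so that the standard approximation and stability estimates
\[
\|v-v_0\|_{L^2(\Omega)} \leq C\,h_0\,|v|_{H^1(\Omega)}, \qquad |v_0|_{H^1(\Omega)} \leq C\,|v|_{H^1(\Omega)}
\]
hold. Next, pick a partition of unity $\{\chi_i\}_{i=1}^{p_k}$ subordinate to $\{\widehat{\Omega}^i_k\}$ with $\sum_i \chi_i \equiv 1$ on $\Omega$, $\mathrm{supp}(\chi_i)\subseteq \widehat{\Omega}^i_k$, and $\|\nabla\chi_i\|_{L^\infty}\leq C/h_0$; this is available precisely because the overlap is of size $h_0$. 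Define $v_i := \mathcal{I}_k\bigl(\chi_i (v-v_0)\bigr) \in V_k$, where $\mathcal{I}_k$ is the Lagrange nodal interpolant on $\mathcal{T}_k$. Since $h_k \leq h_0$, every node at which $\chi_i(v-v_0)$ is nonzero lies in $\widehat{\Omega}^i_k$, so $v_i \in V^i_k$. A direct computation, using $\sum_i\chi_i=1$ and the linearity of $\mathcal{I}_k$ acting on the piecewise-linear function $v-v_0$, shows $\sum_{i=1}^{p_k} v_i = v-v_0$, hence $v=\sum_{i=0}^{p_k} v_i$, proving the sum decomposition.

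Next I would bound $\sum_i a(v_i,v_i)$. By \eqref{bil_equiv_H1} it suffices to bound $\sum_i |v_i|_{H^1}^2$. On each element $\tau \in \mathcal{T}_k$ contained in $\widehat{\Omega}^i_k$, standard interpolation and inverse estimates give
\[
|\mathcal{I}_k(\chi_i w)|_{H^1(\tau)}^2 \;\leq\; C\bigl(\|\nabla\chi_i\|_{L^\infty}^2 \|w\|_{L^2(\tau)}^2 + \|\chi_i\|_{L^\infty}^2 |w|_{H^1(\tau)}^2\bigr),
\]
applied with $w = v-v_0$. Summing over elements and over $i$, and using $\|\chi_i\|_\infty \leq 1$ and $\|\nabla\chi_i\|_\infty \leq C/h_0$, I obtain
\[
\sum_{i=1}^{p_k}|v_i|_{H^1}^2 \;\leq\; C \sum_{i=1}^{p_k}\Bigl(h_0^{-2}\|v-v_0\|_{L^2(\widehat{\Omega}^i_k)}^2 + |v-v_0|_{H^1(\widehat{\Omega}^i_k)}^2\Bigr).
\]
The finite-covering property collapses the sum of localized norms into a global one with a factor $N_c$, and the approximation and stability bounds on $v_0$ convert the $L^2$-term to $|v|_{H^1}^2$ (here the factor $h_0^{-2}$ is exactly balanced by the $h_0^2$ from approximation, giving the advertised $h_0$-independence) and the $H^1$-term to $|v|_{H^1}^2$. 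Adding $|v_0|_{H^1}^2 \leq C|v|_{H^1}^2$ for the coarse component and invoking \eqref{bil_equiv_H1} once more yields the bound \eqref{ineq_C_0} with $C_0$ depending only on shape-regularity of $\mathcal{T}_0$, the covering number $N_c$, and the coercivity/continuity constants $C_{\Theta,1},C_{\Theta,2}$.

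The main obstacle I expect is the careful bookkeeping that makes $C_0$ truly independent of $h_0$: one must ensure the partition of unity is built so that $|\nabla\chi_i|\lesssim 1/h_0$ matches exactly the approximation order of the coarse interpolant, and that $v_i=\mathcal{I}_k(\chi_i(v-v_0))$ lies in $V^i_k$ (nodal support control). A secondary technicality is checking that the nodal interpolation $\mathcal{I}_k$ can be applied to $\chi_i(v-v_0)$ even though $\chi_i$ need not be in $V_k$; this is handled by noting that only nodal values enter and that $\chi_i(v-v_0)$ is continuous and piecewise smooth on $\mathcal{T}_k$. Once these ingredients are in place, the argument is a standard overlapping-Schwarz stability estimate.
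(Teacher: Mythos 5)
The paper gives no proof of this lemma, deferring entirely to \cite{xu1992iterative} and \cite{Dryja}; your argument is exactly the standard stable-decomposition proof contained in those references (coarse quasi-interpolant, partition of unity with gradient of order $1/h_0$ matched to the overlap width, nodal interpolation, bounded covering number, and the $h_0^{-2}\cdot h_0^{2}$ cancellation), so it is correct and consistent with what the paper relies on. The only point to be careful about is the one you already flag: to guarantee $\operatorname{supp}(v_i)\subseteq\widehat{\Omega}^i_k$ as Definition \ref{def_VR_1} requires, the partition of unity must vanish within one fine-mesh layer of $\partial\widehat{\Omega}^i_k$ (so that the nodal basis functions entering $\mathcal{I}_k(\chi_i(v-v_0))$ do not poke outside), which is harmless here since $h_k\leq h_0/2$ for $k\geq 1$ and only worsens the gradient bound by a fixed factor.
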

\begin{proof}
A proof of this result can be found in \cite{xu1992iterative} and \cite{Dryja}. 
\end{proof}
The choice of $R_k^i$ implies that $ R^i_k A^i_k = I $ for all $i=0, \ldots, p_k$ and $k=0, \ldots, J$
and so we have $w_{1,k} = w_1 = 1$.
Assumption \ref{RiSPD_w1lt2} is then satisfied.
Now we can look at Assumptions \ref{exist_K0} and \ref{exist_K1} 
by showing the existence of the parameters $K_0$ and $K_1$ for this application.
\begin{lemma}\label{app1_K0}
Let $ V_k $ be as in Definition \ref{def_VR_1}. 
Then, there exists a constant $K_0$ satisfying Assumption \ref{exist_K0}.
\end{lemma}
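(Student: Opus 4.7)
The plan is to show that the existence of $K_0$ reduces almost immediately to Lemma \ref{app1_Vk}, thanks to the choice of exact subsolvers. The key observation is that with $R_k^i = (A_k^i)^{-1}$, the expression $((R_k^i)^{-1} v_i, v_i)$ simplifies to $(A_k^i v_i, v_i)$, which in turn equals $a(v_i, v_i)$ whenever $v_i \in V_k^i$, by the definition of $A_k^i$ via the relation $(A_k^i u, v) = a(u,v)$ on $V_k^i$.

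First, I would invoke Lemma \ref{app1_Vk} to produce, for any $v \in V_k$, a decomposition $v = \sum_{i=0}^{p_k} v_i$ with $v_i \in V_k^i$ satisfying $\sum_{i=0}^{p_k} a(v_i, v_i) \leq C_0\, a(v,v)$, where $C_0$ is independent of $h_0$, $h_k$, and $p_k$. Next, I would compute for each $i$:
\begin{equation*}
((R_k^i)^{-1} v_i, v_i) = (A_k^i v_i, v_i) = a(v_i, v_i),
\end{equation*}
where the first equality uses the definition $R_k^i = (A_k^i)^{-1}$ and the second is the defining property of $A_k^i$ applied to $v_i \in V_k^i$. Summing over $i$ and combining with the bound from Lemma \ref{app1_Vk} gives
\begin{equation*}
\sum_{i=0}^{p_k} ((R_k^i)^{-1} v_i, v_i) = \sum_{i=0}^{p_k} a(v_i, v_i) \leq C_0\, a(v,v) = C_0\, (A_k v, v),
\end{equation*}
so Assumption \ref{exist_K0} holds with $K_0 = C_0$, and hence $K_0$ is independent of the multigrid level $k$.

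There is really no significant obstacle here: Lemma \ref{app1_Vk} carries all the analytical content (partition of unity / stable decomposition arguments for overlapping subdomains), and the choice of exact subsolvers reduces the quadratic form $((R^i)^{-1}\cdot,\cdot)$ to the energy inner product on each subspace. The only thing worth emphasizing in the write-up is that the constant is independent of the level and of the number of subdomains, which is exactly what Lemma \ref{app1_Vk} provides; this level-independence will later be crucial for verifying Assumption \ref{smooth_psik_nonincr} and obtaining an optimal multigrid error bound.
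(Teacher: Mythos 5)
Your proof is correct and follows essentially the same route as the paper: invoke Lemma \ref{app1_Vk} for the stable decomposition, use $(R_k^i)^{-1} = A_k^i$ and the definition of $A_k^i$ to reduce each term to $a(v_i,v_i)$, and conclude with $K_0 = C_0$. The only difference is that you spell out the identification $a(v,v) = (A_k v, v)$ explicitly, which the paper leaves implicit.
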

\begin{proof}
Let $ v \in V_k $ and consider the decomposition of $V_k$ provided by Lemma \ref{app1_Vk}. Then 
we have by \eqref{ineq_C_0}
\begin{align*}
 \sum_{i=0}^{p_k} ((R^i_k)^{-1}v_i , v_i) &= \sum_{i=0}^{p_k} (A^i_k v_i , v_i) 
 = \sum_{i=0}^{p_k} a(v_i , v_i) 
 \leq C_0 \, a(v,v) \,.
\end{align*}
This shows that $K_0$ exists and $ K_0 = C_0 $.
\end{proof}

\begin{lemma}\label{app1_K1}
Let $ V^i_k $ and $ R^i_k $ as in Definition \ref{def_VR_1}. 
Then, there exists a constant $K_1$ satisfying Assumption \ref{exist_K1}.
\end{lemma}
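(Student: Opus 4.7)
The plan is to combine Cauchy--Schwarz in the energy inner product with the bounded geometric overlap of the subdomains. Since $R_k^i=(A_k^i)^{-1}$, we have $T_k^i=P_k^i$, i.e., $T_k^i$ is the $a$-orthogonal projection onto $V_k^i$. In particular $T_k^i$ is idempotent and self-adjoint with respect to $a(\cdot,\cdot)$, so the Cauchy--Schwarz inequality in the energy inner product yields the pointwise estimate
\begin{equation*}
|a(T_k^i u_i,T_k^j v_j)| \le \|T_k^i u_i\|_E\,\|T_k^j v_j\|_E = \sqrt{a(T_k^i u_i,u_i)}\,\sqrt{a(T_k^j v_j,v_j)}.
\end{equation*}
Introducing $x_i=\sqrt{a(T_k^i u_i,u_i)}$ and $y_j=\sqrt{a(T_k^j v_j,v_j)}$, the verification of \eqref{ineq_K1} reduces to proving an operator-norm estimate of the form $\sum_{(i,j)\in S}\mathcal{M}_{ij}\,x_i y_j \le K_1\,\|x\|\,\|y\|$, where $\mathcal{M}_{ij}=1$ when functions in $V_k^i$ and $V_k^j$ can have intersecting supports and $\mathcal{M}_{ij}=0$ otherwise.

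I would then split $S$ according to whether the coarse index $0$ is involved. For pairs with $i,j\ge 1$, any function in $V_k^i$ is supported in $\widehat{\Omega}_k^i$, so $\mathcal{M}_{ij}=1$ forces $\widehat{\Omega}_k^i\cap\widehat{\Omega}_k^j\neq\varnothing$. Since $\widehat{\Omega}_k^i$ is the $h_0$-fattening of $\Omega_k^i$ and the partition $\{\Omega_k^i\}$ has pieces of size comparable to $h_0$, each $\widehat{\Omega}_k^i$ intersects only a bounded number $N_c$ of the other $\widehat{\Omega}_k^j$, with $N_c$ depending solely on the coarse geometry. A standard two-fold Cauchy--Schwarz on rows and columns of $\mathcal{M}$ restricted to $i,j\ge 1$ then gives
\begin{equation*}
\sum_{\substack{(i,j)\in S\\ i,j\ge 1}}\mathcal{M}_{ij}\,x_i y_j \le N_c\Big(\sum_{i\ge 1}x_i^2\Big)^{1/2}\Big(\sum_{j\ge 1}y_j^2\Big)^{1/2}.
\end{equation*}
For the interactions with $V_0$, the projection $T_k^0 u_0$ has global support, so the corresponding row and column of $\mathcal{M}$ are not sparse. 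The key device is to restrict the integration: since $T_k^j v_j$ lives on $\widehat{\Omega}_k^j$, Cauchy--Schwarz on that set yields $|a(T_k^0 u_0,T_k^j v_j)|\le \|T_k^0 u_0\|_{E,\widehat{\Omega}_k^j}\,y_j$. Summing over $j\ge 1$ and using that each point of $\Omega$ lies in at most $N_c$ of the fattened subdomains, one obtains $\sum_{j\ge 1}\|T_k^0 u_0\|_{E,\widehat{\Omega}_k^j}^2 \le N_c\,a(T_k^0 u_0,u_0) = N_c\,x_0^2$, and a final Cauchy--Schwarz bounds the coarse--fine sum by $\sqrt{N_c}\,x_0\,\|y\|$. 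The symmetric case $j=0$ is identical, and assembling the three contributions produces an admissible $K_1 \le N_c + 2\sqrt{N_c+1}$.

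The main obstacle is the coarse subspace. A naive operator-norm bound on $\mathcal{M}$ would include a row and a column of length $p_k+1$, since $V_0$ can overlap every fine subdomain, and would yield only $K_1=O(p_k)$, which deteriorates as the level refines. The support-restriction argument above is precisely what converts the coarse row and column into $N_c$-bounded contributions, producing a $K_1$ independent of $k$---this is the property that later permits a uniform bound on $\delta_k$ in the convergence analysis.
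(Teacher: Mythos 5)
Your proof is correct, and its overall skeleton is the same as the paper's: reduce via the energy Cauchy--Schwarz inequality and the projection property $T^i=P^i$ to a bilinear estimate governed by the subdomain adjacency pattern, split the index set into the four blocks according to whether the coarse index $0$ appears, and use the uniformly bounded overlap of the $\widehat{\Omega}_k^i$ to control the fine--fine block. Where you genuinely depart from the paper is in the coarse--fine blocks $S_{10}$ and $S_{01}$: the paper formally extends the adjacency matrix $G$ to a zeroth row and bounds $\sum_{i=1}^{p_k}G_{0i}\leq g_0$, which is delicate as written because $V_0$ has global support, so the literal zeroth row sum is $p_k$ rather than $g_0$. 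Your device of localizing the energy inner product to $\widehat{\Omega}_k^j$, i.e. $|a(T^0u_0,T^jv_j)|\leq \|T^0u_0\|_{E,\widehat{\Omega}_k^j}\,\sqrt{a(T^jv_j,v_j)}$, followed by the finite-overlap bound $\sum_j\|T^0u_0\|^2_{E,\widehat{\Omega}_k^j}\leq g_0\,a(T^0u_0,u_0)$, is exactly the argument needed to arrive rigorously at the same final inequality the paper states for these blocks; it buys a clean justification of the $k$-independence of $K_1$ at the cost of a slightly different (but equally admissible) numerical constant. The only point worth making explicit is the geometric hypothesis you invoke---that the non-overlapping pieces $\Omega_k^i$ have diameter bounded below comparably to $h_0$, so that the $h_0$-fattened subdomains have bounded overlap count---which the paper also assumes implicitly when asserting that $g_0$ is uniformly bounded.
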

\begin{proof}
Here we follow a variation of a procedure in \cite{mathew2008domain}, Section 2.5.
Given the subdomains $ \widehat{\Omega}_k^1, \dots, \widehat{\Omega}_k^{p_k}$ as in Definition \ref{def_subd_1},
we define the symmetric $ p_k \times p_k $ matrix $G$ by
\begin{align} \label{g_0}
G_{i \, j} =
\bigg \{
\begin{array}{rl}
 1 &  \quad \mbox{if} \quad \widehat{\Omega}_k^i \cap \widehat{\Omega}_k^j \neq \emptyset,\\
 0 &  \quad \mbox{if} \quad \widehat{\Omega}_k^i \cap \widehat{\Omega}_k^j   = \emptyset\\
\end{array} \qquad g_0 = \max \limits_{i=1,\dots,p_k} \Big(\sum \limits_{j=1}^{p_k} G_{i \, j} \Big) = || G ||_{\infty}  
\end{align}
Note that $g_0$ represents the maximum number of neighbors intersecting a subdomain (counting self intersections) 
and it does not depend on $p_k$ but only on the geometry of the triangulation.
Unlike \cite{mathew2008domain}, 
the summation in the definition of the constant $ g_0 $ does not exclude the $ i $ term.
Also, by the choice of the subdomains, $g_0$ will be uniformly bounded.
Let $S \subset \{0,1, \dots , p_k\} \times \{0,1, \dots , p_k\}$, and consider the decomposition as in \cite{mathew2008domain}, namely
\begin{displaymath}
  \begin{split}
     S  & = S_{00} \cup S_{10} \cup S_{01} \cup S_{11}\,, \\
 S_{00} & = \{(i,j) \in S \, | \, i = 0 \, , \, j = 0  \} \,, \\
 S_{10} & = \{(i,j) \in S \, | \, 1 \leq  i  \leq p_k \, , \, j = 0  \}  \,, \\
 S_{01} & = \{(i,j) \in S \, | \, i = 0 \, , \, 1 \leq   j \leq p_k  \}  \,, \\
 S_{11} & = \{(i,j) \in S \, | \, 1 \leq  i \, , \, j \leq p_k  \}  \,. 
  \end{split}
 \end{displaymath}
Let $u_i , v_i \in V_k$ for $i = 0,1, \dots , k$, then the sum over $S$ can be split as 
\begin{align} \label{sum49}
 \sum_{(i,j) \in S} |a(T^i u_i \,,\, T^j v_j)| &= \sum_{(i,j) \in S_{00}}|a(T^i u_i , T^j v_j)| + \sum_{i:(i,0) \in S_{10}} |a(T^i u_i , T^0 v_0)| \\
 & + \sum_{j:(0,j) \in S_{01}} |a(T^0 u_0 , T^j v_j)| +  \sum_{(i,j) \in S_{11}} |a(T^i u_i \,,\, T^j v_j)| \nonumber
\end{align}
 Let us consider one summand at a time. By the Cauchy-Schwarz inequality and Lemma \ref{lemma_Ti} 
 we have that
\begin{align*}
\Big( \sum_{(i,j) \in S_{00}} |a(T^i u_i , T^j v_j)| \Big)^2 & 
\leq \Big(\sum_{i=0}^{p_k}a(T^i u_i \,,\, u_i)\Big)  \Big(\sum_{j=0}^{p_k}a(T^j v_j \,,\, v_j)\Big) \,. 
 \end{align*}
 If for given $i$ and $j$, $\widehat{\Omega}_k^i \cap \widehat{\Omega}_k^j = \emptyset $, then $ a(T^i u_i , T^j v_j) = 0 $.
 Hence for the last summand we have
\begin{equation} \label{ineq_S11}
\begin{split}
 & \Big ( \sum_{(i,j) \in S_{11}} |a(T^i u_i \,,\, T^j v_j)| \Big)^2  \\
 & =  \Big ( \sum_{(i,j) \in S_{11}} G_{i j} \, |a(T^i u_i \,,\, T^j v_j)| \Big)^2  \\
 & \leq \Big( \sum_{(i,j) \in S_{11}} G_{ij}\sqrt{a(T^i u_i \,,\, T^i u_i)}\sqrt{a(T^j v_j \,,\, T^j v_j)}\Big)^2 \\
 & = \Big( \sum_{(i,j) \in S_{11}} G_{ij}\sqrt{a(T^i u_i \,,\, u_i)}\sqrt{a(T^j v_j \,,\, v_j)}\Big)^2 \text{ (by def of $P^i_k$)} \\%
  & \leq \Big( \sum_{i=1}^{p_k} \sum_{j=1}^{p_k} G_{ij}\sqrt{a(T^i u_i \,,\, u_i)}\sqrt{a(T^j v_j \,,\, v_j)}\Big)^2 \\
 & \leq \rho(G)^2 \, \Big(\sum_{i=1}^{p_k}a(T^i u_i \,,\, u_i)\Big)  \Big(\sum_{j=1}^{p_k}a(T^j v_j \,,\, v_j)\Big) \text{ (by (4.12) in \cite{olshanskii2014iterative}) } \\%
 & \leq g_0^2 \, \Big(\sum_{i=0}^{p_k}a(T^i u_i \,,\, u_i)\Big)  \Big(\sum_{j=0}^{p_k}a(T^j v_j \,,\, v_j)\Big), 
\end{split}
\end{equation}
 where $\rho(G)$ denotes the spectral radius of $G$ which satisfies $\rho(G) \leq ||G||_{\infty}$.\\
 Considering that $a(T^iu_i , T^0v_0)=0$ anytime $\widehat{\Omega}_k^i \cap \widehat{\Omega}_k^0 = \emptyset $, 
 for the second term of the sum in \eqref{sum49} we have
\begin{align*}
\Big( \sum_{i:(i,0) \in S_{10}} |a(T^iu_i , T^0v_0)| \Big)^2  &\leq \Big( \sum_{i:(i,0) \in S_{10}} G_{0i}\sqrt{a(T^iu_i \, , \, T^iu_i )} \sqrt{a(T^0v_0 , T^0v_0)} \Big)^2  \\
& = \Big( \sum_{i:(i,0) \in S_{10}} G_{0i}\sqrt{a(T^iu_i \, , \, T^iu_i )}  \Big)^2 \,\, a(T^0v_0 , T^0v_0) \\
& = \Big( \sum_{i:(i,0) \in S_{10}} G_{0i}\sqrt{a(T^iu_i \, , \, T^iu_i )}  \Big)^2 \,\, a(T^0v_0 , T^0v_0) \\
& \leq \Big(\sum_{i=1}^{p_k} G_{0i}\Big) \,\, \Big( \sum_{i:(i,0) \in S_{10}} a(T^iu_i \, , \, T^iu_i )  \Big)\,\, a(T^0v_0 , T^0v_0)\\
& \leq  g_0 \, \Big(\sum_{i=0}^{p_k}a(T^i u_i \,,\, u_i)\Big)  \, \, a(T^0v_0 , v_0) \; \text{ (by def of $P^i_k$ and $g_0$)}  \\
& \leq  g_0 \, \Big(\sum_{i=0}^{p_k}a(T^i u_i \,,\, u_i)\Big)  \, \, \Big(\sum_{j=0}^{p_k}a(T^jv_j , v_j)\Big) \,.
\end{align*}
Similarly, for the last term of the sum we have
\begin{align*}
\Big( \sum_{j:(0,j) \in S_{01}} |a(T^0u_0 , T^jv_j)| \Big)^2
\leq  g_0 \, \Big(\sum_{i=0}^{p_k}a(T^i u_i \,,\, u_i)\Big)  \, \, \Big(\sum_{j=0}^{p_k}a(T^jv_j , v_j)\Big) \,.
\end{align*}
Combining these four inequalities, it follows that
\begin{align*}
 \Big (\sum_{(i,j) \in S} |a(T^i u_i \,,\, T^j v_j)| \Big)^2 
 \leq 4 \, (1 + 2 g_0 + g_0^2) \, \Big(\sum_{i=0}^{p_k}a(T^i u_i \,,\, u_i)\Big)  \Big(\sum_{j=0}^{p_k}a(T^j v_j \,,\, v_j)\Big) \,.
\end{align*}
This shows that $K_1$ exists and 
\begin{align}
 K_1 = 2 \, (1+g_0).
\end{align}
\end{proof}


The next result follows immediately from Lemmas \ref{app1_K0} and \ref{app1_K1}.  
It shows how the assumption about the non-increasing behavior of $\psi_k$ in \eqref{deltak_choice} is satisfied.
\begin{lemma}
Let $ V^i_k $ and $ R^i_k $ as in Definition \ref{def_VR_1}. 
Then Assumption \ref{smooth_psik_nonincr} is satisfied with
\begin{align}
 \delta_{k} = 1 - \dfrac{1}{C_0(3+ 2 \, g_0)^2}\,, \qquad \psi_{k} =  \dfrac{m_k}{C_0(3+ 2 \, g_0)^2}\,,
\end{align}
if and only if $m_k$ is non-increasing.
Here, $C_0$ is the constant from Lemma \ref{app1_Vk} and $g_0$ is defined in \eqref{g_0}. 
\end{lemma}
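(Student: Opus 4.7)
The plan is to assemble the three constants identified in the preceding lemmas and substitute them into formula \eqref{deltak_choice}, then observe that the resulting $\delta_k$ is independent of $k$, at which point the monotonicity statement of Assumption \ref{smooth_psik_nonincr} collapses to a statement about $m_k$ alone.

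First I would record the constants. The exact-solver choice $R_k^i = (A_k^i)^{-1}$ in Definition \ref{def_VR_1} gives $R_k^i A_k^i = I$, so $\rho(R_k^i A_k^i) = 1$ at every level and for every $i$, yielding $w_1 = 1$ (as already noted immediately before Lemma \ref{app1_K0}). Lemma \ref{app1_K0} supplies $K_0 = C_0$, with $C_0$ the constant from Lemma \ref{app1_Vk}, and Lemma \ref{app1_K1} supplies $K_1 = 2(1+g_0)$. Substituting into \eqref{deltak_choice} I obtain
\[
\delta_k \;=\; 1 - \frac{2 - w_1}{K_0(1+K_1)^2} \;=\; 1 - \frac{1}{C_0\bigl(1 + 2(1+g_0)\bigr)^2} \;=\; 1 - \frac{1}{C_0(3 + 2g_0)^2},
\]
which matches the stated expression and immediately produces $\psi_k = m_k(1-\delta_k) = m_k / \bigl(C_0(3+2g_0)^2\bigr)$.

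The key observation is that the right-hand side of the formula for $\delta_k$ depends on nothing but $C_0$ and $g_0$, both of which are level-independent: $C_0$ is independent of $h_0$, $h_k$ and $p_k$ by Lemma \ref{app1_Vk}, while $g_0$ is a purely geometric cross-intersection count that is uniformly bounded across levels, as remarked in the proof of Lemma \ref{app1_K1}. Hence $1-\delta_k$ is a fixed positive constant, and so $\psi_k$ is a positive scalar multiple of $m_k$. The sequence $\{\psi_k\}_{k=1}^{J}$ is therefore non-increasing if and only if $\{m_k\}_{k=1}^{J}$ is, which is exactly the iff required by Assumption \ref{smooth_psik_nonincr}.

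The only point in the argument that requires any care is the uniform-in-$k$ bound on $g_0$; once that is invoked from the preceding lemma, the rest of the proof is pure bookkeeping on constants already collected and introduces no new analytical content.
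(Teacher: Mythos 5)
Your proof is correct and follows exactly the route the paper intends: the lemma is stated there as an immediate consequence of Lemmas \ref{app1_K0} and \ref{app1_K1}, obtained by substituting $w_1=1$, $K_0=C_0$ and $K_1=2(1+g_0)$ into \eqref{deltak_choice} and observing that the resulting $\delta_k$ is level-independent, so that $\psi_k$ is a fixed positive multiple of $m_k$. Nothing is missing.
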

Notice that the constant $\delta_{k}$ is independent of $k$.
Hence, we have the convergence result.
\begin{theorem}
 If $m_k$ is non-increasing, the multigrid algorithm \ref{alg_Vcycle} converges with 
\begin{align}
 \gamma_k = \dfrac{C_0 (3 + 2 \, g_0)^2}{C_0 (3 + 2 \, g_0)^2 + 2 \, m_k } \,,
\end{align}
where $ \gamma_k $ are the constants defined in \eqref{gamma_k}.

Moreover, if $m_1 = m_2 = \ldots = m_J$, the error bound is optimal 
in the sense that it does not deteriorate as the number of multigrid spaces $J$ increases.
\end{theorem}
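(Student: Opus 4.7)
The plan is to assemble the pieces already established. The preceding lemma gives the precise form of $\delta_k$ and $\psi_k$ for this uniform refinement application, and Theorem \ref{mg_conv} provides the abstract convergence bound in terms of $\gamma_k = 1/(1+2m_k(1-\delta_k))$. So the proof is essentially a substitution followed by an observation about independence of $J$.

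First I would verify that the three structural assumptions of the multigrid convergence theory hold in this setting. Assumption \ref{smooth_SPSD} follows because, as noted after \eqref{sym_nonsym}, the symmetric SSC error operator $\widehat{E}^s_{p_k} = \widehat{E}^*_{p_k} \widehat{E}_{p_k}$ is SPD with respect to $a(\cdot,\cdot)$. Assumption \ref{smooth_deltak_lt1} follows from \eqref{energynormEhat}, \eqref{inequalityEhat}, and the choice \eqref{deltak_choice}, combined with Lemmas \ref{app1_K0} and \ref{app1_K1} which provide $K_0 = C_0$ and $K_1 = 2(1+g_0)$, together with $w_1 = 1$ from the exact-subsolver choice $R_k^i = (A_k^i)^{-1}$. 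Substituting these values yields
\begin{equation*}
\delta_k \; = \; 1 - \frac{2-w_1}{K_0(1+K_1)^2} \; = \; 1 - \frac{1}{C_0(3+2g_0)^2},
\end{equation*}
which is the value stated in the lemma immediately preceding the theorem. Crucially, this $\delta_k$ does not depend on $k$ since $C_0$ and $g_0$ are uniform constants.

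Next, I would invoke the preceding lemma to get Assumption \ref{smooth_psik_nonincr}: since $1-\delta_k$ is a fixed constant, the sequence $\psi_k = m_k(1-\delta_k)$ is non-increasing if and only if $m_k$ is non-increasing. Under the hypothesis of the theorem, this is satisfied. Then Theorem \ref{mg_conv} applies directly and gives the error bound $a(E_J v, v) \leq \gamma_J \, a(v,v)$ with
\begin{equation*}
\gamma_k \; = \; \frac{1}{1 + 2 m_k (1-\delta_k)} \; = \; \frac{1}{1 + \dfrac{2 m_k}{C_0(3+2g_0)^2}} \; = \; \frac{C_0(3+2g_0)^2}{C_0(3+2g_0)^2 + 2 m_k},
\end{equation*}
which is precisely the expression claimed.

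For the optimality statement, the key observation is that the constants $C_0$ and $g_0$ are uniformly bounded independently of $k$ (by Lemma \ref{app1_Vk} and by the geometric definition \eqref{g_0} of $g_0$). Therefore if $m_1 = \cdots = m_J = m$, the expression for $\gamma_J$ reduces to a quantity depending only on $C_0$, $g_0$, and $m$, with no dependence on $J$ whatsoever, yielding the advertised level-independent bound. There is no real obstacle here; the work was done in the preceding lemmas establishing $K_0$, $K_1$, and $w_1$ as level-independent constants, and the theorem is a clean substitution into the abstract framework of Section \ref{sec_mg}.
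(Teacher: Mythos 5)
Your proof is correct and follows exactly the route the paper intends: the theorem is stated there without an explicit proof precisely because it is the direct substitution of $w_1=1$, $K_0=C_0$, and $K_1=2(1+g_0)$ into \eqref{deltak_choice} and then into \eqref{gamma_k}, with optimality following from the $k$-independence of $C_0$ and $g_0$. Your verification of Assumptions \ref{smooth_SPSD}--\ref{smooth_psik_nonincr} and the final observation match the paper's chain of lemmas, so there is nothing to add.
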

Notice that convergence can be achieved even by performing only one smoothing iteration, 
but a larger $ m_k $ can further lower the error bound. 

\subsection{Local refinement: 
overlapping nested subdomains,
subproblems on regular grids 
and approximate subsolvers} \label{sec_loc_reg}

Now we move to an application involving a locally refined grid.
\giachi{\begin{definition}[Triangulations $\mathcal{T}_k$] \label{def_triang_3}
Let $\{ \Omega_k\}_{k=0}^{J}$ be a collection of closed subdomains of $\Omega$ such that
$$ \Omega_J \subset \Omega_{J-1} \subset \cdots \Omega_0 \equiv \Omega.$$
Let $\mathcal{T}_0$ be a coarse quasi-uniform triangulation of $\Omega$ of size $h_0\in (0,1]$. 
Assume $\mathcal{T}_{k-1}$ has been defined, then $\mathcal{T}_k$ is obtained performing midpoint refinement only on those elements
of $\mathcal{T}_{k-1}$ that belong to $\Omega_k$.
\end{definition}
This process introduces hanging nodes, causing the grid $\mathcal{T}_k$ to become
irregular, for all $k=1,\ldots,J$. However, restricted to $\Omega_k$, $\mathcal{T}_k$ is a regular grid
without hanging nodes and size $h_k =  2^{-k} h_0$.
We observe that the sequence $\{\mathcal{T}_k\}_{k=0}^J$ is nested in the sense that an element $T \in \mathcal{T}_{k-1}$
can be written as the union of elements in $\mathcal{T}_{k}$ \cite{Ciarlet:2002:FEM:581834}.
}
Moreover, by construction we have that $h_0 = \max \limits_{T \in \mathcal{T}_k} h_{T}$, where $h_{T}$ denotes the size of one element $T \in \mathcal{T}_k$.
 Figure \ref{local_case_1} sketches an example of triangulation for this case.
\begin{figure}[h]
\centering
\includegraphics[scale=0.5]{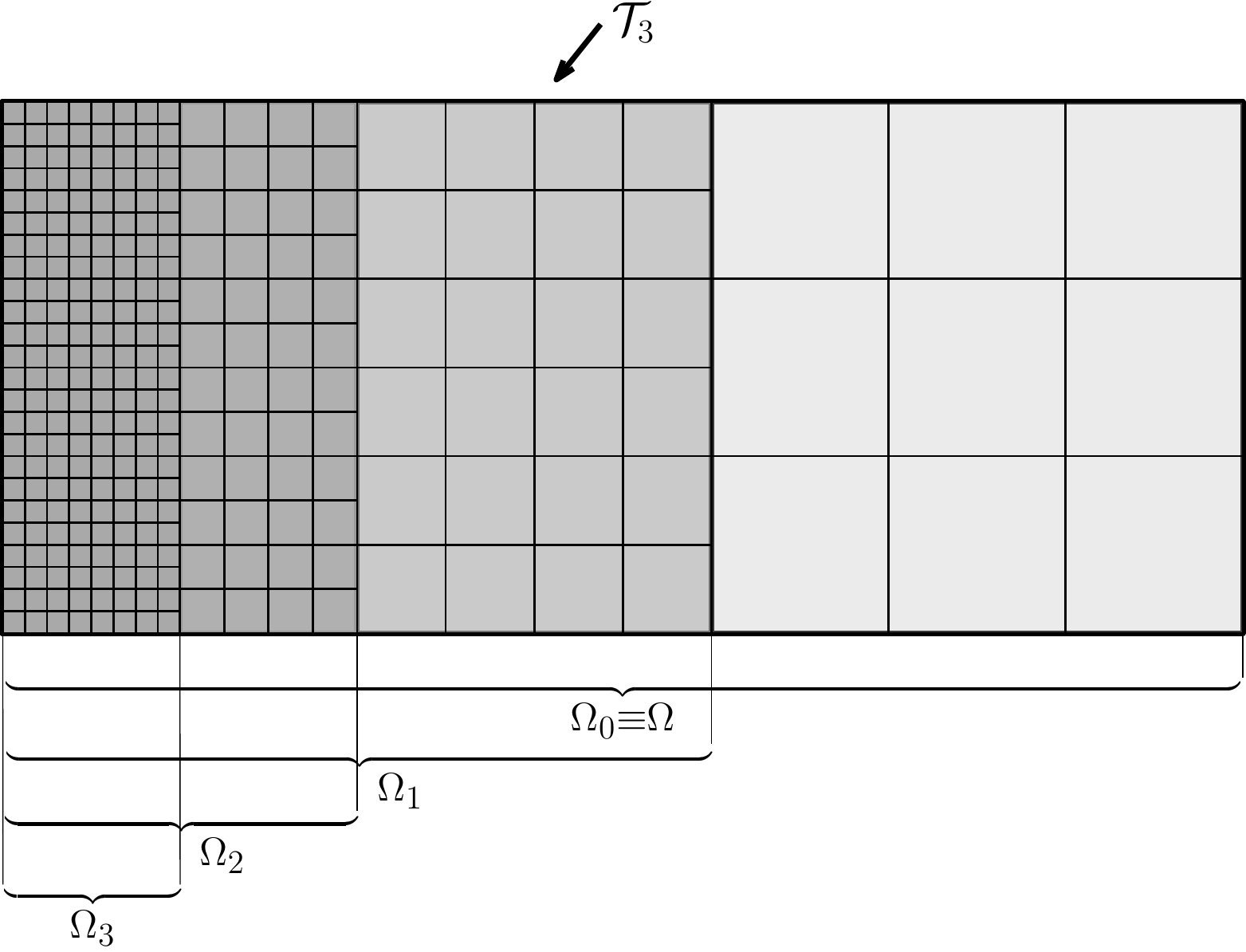}
\caption{Subdomains involved in the construction of the irregular triangulation obtained with local midpoint refinement.}\label{local_case_1}
\end{figure}
Concerning the spaces $ V_k $, the subspaces $ V^i_k $ and the corresponding subsolvers $ R^i_k $
we choose the following.
\begin{definition} \label{def_VR_2}
Given the overlapping subdomains $\Omega_i$ 
and the triangulations $ \mathcal{T}_k $ in Definition \ref{def_triang_3}, 
we set for $ k = 0, \ldots, J $ and for $i=0, \ldots, k $
\begin{align}\label{app2_summary}
\begin{cases}
 V_k  = \{ v \in H_0^1(\Omega) \cap C^0(\Omega) : v|_{\tau} \in \mathcal{P}_1, \, \, 
 \forall \tau \in \mathcal{T}_k \,,  \}, \\
 \quad \quad \, \text{where $\mathcal{T}_k$ is as in Definition \ref{def_triang_3}} \,, 
 \\
 V^i_k  :=
 \begin{cases}
 V_0 \,,                                                    & i = 0 \,,   \\
 \{ v \in V_{i} \,\, | \, \, supp(v) \subseteq \Omega_i\} \,, & i = 1, \ldots, k 
 \end{cases} \,, \\
 \vspace{0.001em} \\
 R^i_k  := 
 \begin{cases}
     A_0^{-1}  \,,                       & i = 0 \,,  \\ 
     \dfrac{1}{\lambda^i \, k} \, I  \,, & i = 1, \ldots, k \,  
 \end{cases}
\end{cases}
\end{align}
where $\lambda^i$ denotes the spectral radius of $A^i$.
\end{definition}
We point out that the $V_k$ satisfy by construction the nestedness condition \eqref{mg_spaces}.
\giachi{Moreover, the continuity requirement in the definition of $V_k$ implies that its nodal
basis will have no function associated to hanging nodes of $\mathcal{T}_k$}.
Also, since the support of the functions in each $V^i_k$ is contained in $ \Omega_i $, 
 the subproblems are all defined on uniformly refined grids without hanging nodes,
 although $ \mathcal{T}_k$ is irregular.
\giachi{This considerably simplifies the implementation since no actual constraints have to be added 
and no change in the nodal basis is required to obtain a continuous numerical solution}.
If $ v \in V^i_k $, it will be a linear combination of the basis functions 
associated with the interior nodes of $\Omega_i$.
The following lemma is a consequence of the choice of the subspaces introduced in Definition \ref{def_VR_2}. 
See also \cite{dryja1989optimality} for more on this decomposition.
\begin{lemma}
 Given $ V_k $ and $ V^i_k $ in Definition \ref{def_VR_2}, we have
$$ V_k = \sum_{i=0}^k V^i_k \,. $$
\end{lemma}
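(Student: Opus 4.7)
The plan is to build the decomposition by a telescoping sum of nodal interpolants onto the intermediate finite element spaces. Let $\Pi_j : V_k \to V_j$ denote the interpolation operator that sends $v \in V_k$ to the element of $V_j$ which agrees with $v$ at every regular (non-hanging) node of $\mathcal{T}_j$, with the value at each hanging node of $\mathcal{T}_j$ prescribed by the usual continuity constraint. Because $v$ already lies in $V_k$ we have $\Pi_k v = v$, and the telescoping identity
\begin{equation*}
v \;=\; \Pi_0 v \;+\; \sum_{i=1}^{k}\bigl(\Pi_i - \Pi_{i-1}\bigr)v
\end{equation*}
holds trivially. The goal is to show that each summand lies in the corresponding subspace $V_k^i$, after which the inclusion $V_k \subseteq \sum_{i=0}^{k} V_k^i$ is proved; the reverse inclusion follows immediately from $V_k^i \subseteq V_i \subseteq V_k$ and the nestedness of the multigrid spaces.

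The first term $\Pi_0 v$ sits in $V_k^0 = V_0$ by construction. For $i \geq 1$, set $w_i := (\Pi_i - \Pi_{i-1})v$; by the nestedness $V_{i-1} \subset V_i$ we have $w_i \in V_i$, so it only remains to verify that $\mathrm{supp}(w_i) \subseteq \Omega_i$. I will fix an element $\tau \in \mathcal{T}_i$ with $\tau \not\subseteq \Omega_i$ and show $w_i|_\tau = 0$. The crucial geometric observation is that the nested chain $\Omega_J \subset \Omega_{J-1} \subset \cdots \subset \Omega_0 \equiv \Omega$ together with the local refinement rule of Definition \ref{def_triang_3} forces any element lying outside $\Omega_i$ to stay outside every $\Omega_j$ for $j \geq i$, and hence never to be refined again. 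Consequently $\tau \in \mathcal{T}_k$, the restriction $v|_\tau$ is affine, and both $\Pi_i v$ and $\Pi_{i-1} v$ reproduce it on $\tau$, giving $w_i|_\tau = 0$.

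The main subtlety I anticipate, and the step that requires the most care, is confirming that $\Pi_j v$ really lives in $V_j$ rather than being merely piecewise affine on $\mathcal{T}_j$: one must check that the hanging-node constraints imposed by the continuous space $V_j$ are automatically consistent with the values that $v$ takes at those points. This reduces to showing that whenever a hanging node $x$ of $\mathcal{T}_j$ lies on the edge of an unrefined neighbor $\tau' \in \mathcal{T}_j$, the function $v$ is affine along that edge so that the constraint reproduces $v(x)$. Invoking the same nested-subdomain argument as above, an unrefined element $\tau' \in \mathcal{T}_j$ must satisfy $\tau' \not\subseteq \Omega_j$, hence $\tau' \not\subseteq \Omega_\ell$ for all $\ell \geq j$, so $\tau'$ persists in $\mathcal{T}_k$ as a single element and $v|_{\tau'}$ is affine. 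Once this consistency check is in place, the telescoping identity and the support argument combine to yield the decomposition, completing the proof.
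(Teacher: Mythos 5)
Your proof is correct, and it establishes the stated decomposition by a genuinely different (and more self-contained) route than the paper. The paper uses the same telescoping skeleton, but with the quasi-projections $\widehat{Q}_i$ of Bramble--Pasciak--Wang--Xu in place of your nodal interpolants $\Pi_i$: at the nodes of $V_i$ interior to $\Omega_{i+1}$ the paper assigns the value of the $L^2$ projection $\overline{Q}_i v$ onto an auxiliary uniformly refined space, and only at the remaining nodes does it use $v$ itself; the facts that $(\widehat{Q}_i-\widehat{Q}_{i-1})v\in V_k^i$ and that the splitting is stable are then cited from \cite{bramble1991convergence}. What your version buys is an elementary, citation-free argument, whose engine is the clean geometric observation that an element not contained in $\Omega_i$ persists unrefined into $\mathcal{T}_k$, so that $v$ is already affine there and both interpolants reproduce it. What the paper's version buys is the stability estimates \eqref{brambleineq1} ($L^2$ smallness of the increments and $H^1$ boundedness of $\widehat{Q}_i$), which are not needed for this lemma but are essential in the very next lemma to produce $K_0=C_3(1+k^2)$; your nodal-interpolation splitting could not be reused there, since coarse-grid nodal interpolation is not uniformly $L^2$- or $H^1$-stable, so Assumption \ref{exist_K0} would still require the paper's decomposition. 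One small point to tighten: since the paper allows arbitrarily many hanging nodes per edge, the endpoints of a master edge may themselves be hanging, so your consistency check ($\Pi_j v(x)=v(x)$ at hanging nodes $x$) should be run as an induction along the finite chain of constraints, terminating at regular nodes; each master element in the chain is unrefined at its creation level, hence persists into $\mathcal{T}_k$ by your own argument, so the induction closes.
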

\begin{proof}
The result follows if for given $ v \in V_k $ we can find a decomposition 
$ v = \sum \limits_{i=0}^k v_i $ such that $ v_i \in V^i_k $.
 To do this, we will consider a result from \cite{bramble1991convergence} 
that relies on the construction of a sequence of operators $ \widehat{Q}_i : V_k \rightarrow V_i $.
Let $\overline{V}_i$ be the space obtained by taking $\Omega_0 = \Omega_1 = \dots = \Omega_i$, 
namely the space built over a uniformly refined triangulation
of size $ h_i = h_0 2^{-i} $ and let $\overline{Q}_i$ be the $L^2(\Omega)$ projection operator onto $\overline{V}_i$.
Set $\widehat{Q}_k = I$ and for $i=0, \dots, k-1$ define $ \widehat{Q}_i v = w $
as the unique function on $V_i$ that satisfies
\begin{displaymath}
w =
\bigg \{
\begin{array}{rl}
 \overline{Q}_i v &  \qquad \mbox{at the nodes of $V_i$ in the interior of $\Omega_{i+1}$},\\
 v &  \quad \quad \, \mbox{at the remaining nodes of $V_i$} \,. 
\end{array}
\end{displaymath}
It has been shown in \cite{bramble1991convergence} that $ (\widehat{Q}_i - \widehat{Q}_{i-1}) v $ 
is a function in $ V^i_k $ for all $ i = 1, \ldots, k $ and that 
\begin{equation}  \label{brambleineq1}
 \begin{split}
 ((\widehat{Q}_i - \widehat{Q}_{i-1})v , (\widehat{Q}_i - \widehat{Q}_{i-1})v) \leq \widetilde{C}_1 \, \, h^2_0
 \,\,a(v,v) \quad \mbox{for} \, \, i =1, \dots, k  \,, \\
 ((\widehat{Q}_i - \widehat{Q}_{i-1})v , (\widehat{Q}_i - \widehat{Q}_{i-1})v) \leq C_1 \, \, {\lambda_i}^{-1} \,\,a(v,v) \quad \mbox{for} \, \, i =1, \dots, k \,, \\
  a(\widehat{Q}_i v , \widehat{Q}_i v) \leq C_2 \, a(v,v) \quad \mbox{for} \, \, i = 0, \dots, k-1 \,,
 \end{split}
\end{equation}
where $\lambda_i$ denotes the spectral radius of the operator $A_i$ and $\widetilde{C}_1$, $C_1$ and $C_2$ do not depend on $i$.
It then follows that for all $ v \, \in \, V_k$
\begin{align} \label{bramble_dec}
 v = \widehat{Q}_0 v + \sum_{i=1}^k (\widehat{Q}_i - \widehat{Q}_{i-1}) v = \sum_{i=0}^k v_i  \,,
\end{align}
where
\begin{equation} \label{vi_dec}
  v_i :=
 \begin{cases}
  \widehat{Q}_0 v \,,                        & i = 0 \,, \\
 (\widehat{Q}_i - \widehat{Q}_{i-1}) v \,,   & i = 1, \dots, k \,,
  \end{cases}
 \end{equation}
 with $ v_i\in V^i_k $ for all $i$.
\end{proof}

\begin{remark}
In this case $p_k = k$.
This means that at each level $k$, the number of subdomains is fixed and equal to $ k $ as well.
At the given level $k$, notice that $ V^i_k \nsubseteq V^j_k, \forall i>j $, 
 since the trace of $  V^i_k $ on $ \partial \Omega_i $ is zero
 while the trace of $V^j_k$ is not.
Moreover, it follows \giachi{from Definition \ref{def_VR_2}} that the $  V^i_k  $ are independent of $ k $.
Consequently so will be $A^i$, in the sense that $A^i_i = A^i_{i+1} = \ldots \ = A^i_k$.
\end{remark}
Note that with the choice of $ R^i_k $ in \eqref{app2_summary} we have $ \rho(R_k^0 A_0) = 1 $ and $ \rho(R^i_k A^i) = 1/k $
for all $ i = 1, \ldots, k $. This implies that $w_{1,k} = w_1 = 1$, 
so that Assumption \ref{RiSPD_w1lt2} is satisfied.
Now we can show the existence of the parameters $K_0$ and $K_1$.
\begin{lemma}
Let $ V^i_k $ and $ R^i_k $ as in Definition \ref{def_VR_2}. 
Then, there exists a constant $K_0$ satisfying Assumption \ref{exist_K0}.
\end{lemma}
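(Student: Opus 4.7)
The plan is to take any $v \in V_k$, apply the decomposition $v = \sum_{i=0}^{k} v_i$ provided by the previous lemma (with $v_0 = \widehat{Q}_0 v$ and $v_i = (\widehat{Q}_i - \widehat{Q}_{i-1}) v$ for $i \geq 1$), and then bound the sum $\sum_{i=0}^{k} ((R^i_k)^{-1} v_i, v_i)$ term-by-term using the estimates collected in \eqref{brambleineq1}. Since the decomposition and the three inequalities in \eqref{brambleineq1} are the key tools, the proof essentially reduces to plugging in the specific choices of $R^i_k$ from Definition \ref{def_VR_2} and tracking the dependence on the multigrid level $k$.

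For the coarse-level contribution $i=0$, substituting $R^0_k = A_0^{-1}$ gives $((R^0_k)^{-1} v_0, v_0) = (A_0 v_0, v_0) = a(\widehat{Q}_0 v, \widehat{Q}_0 v)$, which by the third estimate of \eqref{brambleineq1} is bounded by $C_2\, a(v,v)$. For $i \geq 1$, substituting $R^i_k = (\lambda^i k)^{-1} I$ yields $((R^i_k)^{-1} v_i, v_i) = \lambda^i k\,(v_i, v_i)$. I would then combine this with the second estimate of \eqref{brambleineq1}, namely $(v_i, v_i) \leq C_1 \lambda_i^{-1} a(v,v)$, together with the comparison $\lambda^i \leq \lambda_i$, to bound each such term by $C_1\, k\, a(v,v)$. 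Summing the $k$ contributions and adding the coarse term gives $\sum_{i=0}^{k} ((R^i_k)^{-1} v_i, v_i) \leq (C_2 + C_1 k^2)\, a(v,v)$, so Assumption \ref{exist_K0} holds with $K_0 = C_2 + C_1 k^2$.

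The one step that needs justification rather than being pure bookkeeping is the inequality $\lambda^i \leq \lambda_i$ between the spectral radius of $A^i$ (acting on $V^i$) and that of $A_i$ (acting on $V_i$). This follows from the fact that, by the definitions of $A_k^i$ and $A_i$, one has $(A^i u, u) = a(u,u) = (A_i u, u)$ for every $u \in V^i \subseteq V_i$, so the maximum of the Rayleigh quotient $a(u,u)/(u,u)$ over the smaller subspace $V^i$ cannot exceed the maximum over $V_i$. Everything else in the argument is just careful use of the assumed bounds. I note that the resulting $K_0$ grows quadratically with $k$; this dependence is expected, as it ties in with the quadratic dependence of the final multigrid error bound on $J$ that is announced in the paper's narrative, and it is exactly the factor of $k$ in the denominator of $R^i_k$ that shifts the $k$-dependence from $w_1$ (ensuring Assumption \ref{RiSPD_w1lt2} with $w_1 = 1$) into $K_0$.
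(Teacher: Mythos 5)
Your proof is correct and follows essentially the same route as the paper: the same decomposition $v_0 = \widehat{Q}_0 v$, $v_i = (\widehat{Q}_i - \widehat{Q}_{i-1})v$, the same use of the second and third estimates in \eqref{brambleineq1}, and the same cancellation via $\lambda^i/\lambda_i \leq 1$, yielding a constant $K_0 = C_2 + C_1 k^2$ that differs from the paper's $C_3(1+k^2) = \max\{C_1,C_2\}(1+k^2)$ only by a harmless majorization. Your explicit Rayleigh-quotient justification of $\lambda^i \leq \lambda_i$ fills in a step the paper leaves implicit, and is valid since $V^i_k \subseteq V_i$ and both operators represent $a(\cdot,\cdot)$ against $(\cdot,\cdot)$ on their respective spaces.
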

\begin{proof}
 Using the definition of $ R_k^i $ together with \eqref{brambleineq1} and \eqref{vi_dec} we have
\begin{align*}
 \sum_{i=0}^k ((R_k^i)^{-1} v_i, v_i) 
 & = (A_0 v_0 , v_0) +  k \, \sum_{i=1}^k \lambda^i \, ((\widehat{Q}_i - \widehat{Q}_{i-1})v , (\widehat{Q}_i - \widehat{Q}_{i-1})v) \\
 & \leq a(v_0 , v_0) + k \, \sum_{i=1}^k C_1 \dfrac{\lambda^i}{\lambda_i} \,\,a(v,v) \\
 & \leq C_2 \, a(v,v) + k^2 \, C_1 \, a(v,v) \\
 & \leq \max\{C_1 , C_2\}(1 +\, k^2) \, a(v,v) = C_3 (1 + \, k^2) \, a(v,v) \,. 
 \end{align*}
This shows that $K_0$ exists and 
\begin{align}\label{C_3}
 K_0 = C_3 \, (1 + k^2).
\end{align}
\end{proof}


Let us now show the existence of $K_1$ for this application.
\begin{lemma}
Let $ V^i_k $ and $ R^i_k $ as in Definition \ref{def_VR_2}.
Then, there exists a constant $K_1$ satisfying Assumption \ref{exist_K1}.
\end{lemma}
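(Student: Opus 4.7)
The plan is to prove existence of $K_1$ as a constant independent of $k$, via a two-fold Cauchy--Schwarz argument that crucially exploits the $1/(\lambda^i k)$ scaling of the Jacobi subsolvers. The cornerstone is the estimate
\begin{equation*}
\|T^i u\|_E^2 \leq \rho(R^i_k A^i)\, a(T^i u, u), \qquad u \in V_k,\ i = 0, \dots, k.
\end{equation*}
To derive it I would observe that $R^i_k A^i$, restricted to $V^i_k$, is symmetric positive semi-definite with respect to $a(\cdot,\cdot)$ with spectral radius $\rho(R^i_k A^i)$. Writing $T^i u = R^i_k A^i w$ with $w = P^i u \in V^i_k$, the spectral bound yields $a(R^i_k A^i w, R^i_k A^i w) \leq \rho(R^i_k A^i)\, a(R^i_k A^i w, w)$, and the identity $a(T^i u, P^i u) = a(T^i u, u)$, valid since $T^i u \in V^i_k$ and by definition of $P^i$, closes the loop.

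A direct calculation using Definition \ref{def_VR_2} gives $\rho(R^0_k A^0) = 1$ and $\rho(R^i_k A^i) = 1/k$ for $i = 1, \dots, k$, hence
\begin{equation*}
\sum_{i=0}^k \rho(R^i_k A^i) = 1 + k \cdot \tfrac{1}{k} = 2,
\end{equation*}
independently of $k$. Combining Cauchy--Schwarz in the energy norm with the cornerstone bound to get $|a(T^i u_i, T^j v_j)| \leq \sqrt{\rho(R^i_k A^i)\rho(R^j_k A^j)}\,\sqrt{a(T^i u_i,u_i)\,a(T^j v_j,v_j)}$, extending the sum over $S$ to the full double sum, factoring into a product of single sums, and applying discrete Cauchy--Schwarz on each of them leads to
\begin{equation*}
\sum_{(i,j) \in S} |a(T^i u_i, T^j v_j)| \leq 2 \left(\sum_{i=0}^k a(T^i u_i, u_i)\right)^{\!1/2} \left(\sum_{j=0}^k a(T^j v_j, v_j)\right)^{\!1/2},
\end{equation*}
so Assumption \ref{exist_K1} holds with $K_1 = 2$.

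The main delicate point is the cornerstone bound; the rest is mechanical summation. What looks deceptive is that a naive approach would use only the global $w_1 = 1$ and extract a factor $\sqrt{k+1}$ from each of the two single Cauchy--Schwarz applications, yielding $K_1 = O(k)$ and destroying the balance with $K_0 = O(k^2)$ downstream. The whole purpose of the $1/k$ scaling in Definition \ref{def_VR_2} is to absorb this $k$-factor through the telescoping identity $\sum_i \rho(R^i_k A^i) = 2$; the resulting $k$-independent $K_1$ is precisely what allows only a quadratic growth of $m_k$ to compensate the $k$-deterioration of $\delta_k$ and preserve optimality of the multigrid bound.
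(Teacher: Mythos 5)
Your proof is correct and follows essentially the same route as the paper: the cornerstone bound $a(T^iu,T^iu)\leq \rho(R^i_kA^i)\,a(T^iu,u)$ is exactly the paper's key inequality (there derived by direct computation with $R^i_k=\tfrac{1}{\lambda^i k}I$ rather than by the abstract spectral argument, but with the same values $\rho=1$ for $i=0$ and $\rho=1/k$ for $i\geq 1$), and both arguments land on $K_1=2$. The only difference is organizational: the paper splits $S$ into the four blocks $S_{00},S_{10},S_{01},S_{11}$ and bounds each separately, whereas your single weighted Cauchy--Schwarz with $\sum_i\rho(R^i_kA^i)=2$ reaches the same constant more compactly.
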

\begin{proof}
For $ i = 1, \dots, k $ and $ u \in V_k $ we have
\begin{align*}
0 \leq \,  a(T^i u \,,\, T^i u) &= a(R_k^iQ^iA_k \, u \,,\, R_k^iQ^iA_k \, u)  \\
&= \Big( \dfrac{1}{\lambda^i \, k} \Big)^2 \, a(Q^iA_k \, u \,,\, Q^iA_k \, u)  \\
&= \Big( \dfrac{1}{\lambda^i \, k}\Big)^2 \, (Q^iA_k \, u \,,\, A^iQ^iA_k \, u)  \qquad \mbox{(def. of $A^i$)}\\
&\leq \Big( \dfrac{1}{\lambda^i \, k}\Big)^2 \lambda^i \, (Q^iA_k \, u \,,\, Q^iA_k \, u)  \qquad \mbox{($A^i$ is SPD wrt $(\cdot,\cdot)$)}\\
&= \Big( \dfrac{1}{\lambda^i \, k^2}\Big) \, (Q^iA_k \, u \,,\, A_k \, u)  \qquad \qquad \mbox{(def. of $Q^i$)}\\
&=  \dfrac{1}{k} \, (R_k^iQ^iA_k \, u \,,\, A_k \, u)  \qquad \qquad \quad \, \, \,\, \mbox{(def. of $R_k^i$)}\\
&=  \dfrac{1}{k} \, (T^i \, u \,,\, A_k \, u)  \qquad \qquad \qquad \quad \quad \, \mbox{(def. of $T^i$)}\\
&=  \dfrac{1}{k} \, a(T^i \, u \,, \, u)  \qquad \qquad \qquad \qquad \quad \mbox{(def. of $A_k$)} \,.
\end{align*}
For $i=0$ we have $ R_k^0 = A_0^{-1} $ so that $ T^0 = P^0 $ and 
\begin{align}
 a(T^0 u , T^0 u) &= a(P^0 u , P^0 u) = a(P^0 u ,  u) = a(T^0 u ,  u) \,.
\end{align}
 In summary
\begin{equation} \label{ineqT0Ti}
 a(T^i u\,,\, T^i u) 
 \begin{cases}
  = \, a(T^i u ,  u)                        \,, & i = 0 \,, \\
\leq  \dfrac{1}{k} \, a(T^i \, u \,, \, u)  \,, & i = 1, \dots, k \,.
 \end{cases}
\end{equation}
 Let $S \subset \{0,1, \dots , k\} \times \{0,1, \dots , k\}$,
 and consider the decomposition of such set as before with $ p_k = k $, namely
\begin{equation}
\begin{split}
     S  & = S_{00} \cup S_{10} \cup S_{01} \cup S_{11}\,, \\
 S_{00} & = \{(i,j) \in S \, | \, i = 0 \, , \, j = 0  \} \,, \\
 S_{10} & = \{(i,j) \in S \, | \, 1 \leq  i  \leq k \, , \, j = 0  \} \,,\\
 S_{01} & = \{(i,j) \in S \, | \, i = 0 \, , \, 1 \leq   j \leq k  \} \,, \\
 S_{11} & = \{(i,j) \in S \, | \, 1 \leq  i \, , \, j \leq k  \} \,. 
\end{split}
\end{equation}
Let $u_i , v_i \in V_k$ for $i = 0,1, \dots , k$, then
\begin{align}\label{sum49}
 \sum_{(i,j) \in S} |a(T^i u_i \,,\, T^j v_j)| &= \sum_{(i,j) \in S_{00}} |a(T^i u_i , T^j v_j)| + \sum_{i:(i,0) \in S_{10}} |a(T^i u_i , T^0 v_0)| \\
 &+  \sum_{j:(0,j) \in S_{01}} |a(T^0 u_0 , T^j v_j)| +  \sum_{(i,j) \in S_{11}} |a(T^i u_i \,,\, T^j v_j)| \,. \nonumber
\end{align}
Let us consider one summand at a time.
By the Cauchy-Schwarz inequality with the $ a(\cdot,\cdot) $ inner product,
\eqref{ineqT0Ti} and Lemma \ref{lemma_Ti} we have that
\begin{align*}
\Big( \sum_{(i,j) \in S_{00}} |a(T^i u_i , T^j v_j)| \Big)^2 &\leq a(T^0 u_0 ,  u_0) \, a(T^0 v_0 ,  v_0) \\
&\leq \Big(\sum_{i=0}^{k}a(T^i u_i \,,\, u_i)\Big)  \Big(\sum_{j=0}^{k}a(T^j v_j \,,\, v_j)\Big) \,.  
\end{align*}
 For the last summand we have,
 using again the same properties,  
\begin{align*}
 \Big ( \sum_{(i,j) \in S_{11}} |a(T^i u_i \,,\, T^j v_j)| \Big)^2 & \leq  \Big( \sum_{(i,j) \in S_{11}} \sqrt{a(T^i u_i \,,\, T^i u_i)} \, \sqrt{a(T^j v_j \,,\, T^j v_j)}\Big)^2  \\
 \leq \dfrac{1}{k^2} \, & \Big( \sum_{i:(i,j) \in S_{11}}\sqrt{a(T^i u_i \,,\, u_i)} \Big)^2  \Big( \sum_{j:(i,j) \in S_{11}}\sqrt{a(T^j v_j \,,\, v_j)} \Big)^2 
\\
& \leq \dfrac{1}{k^2} \,\Big( \Big(\sum_{i=1}^{k}a(T^i u_i \,,\, u_i)\Big) \, k \Big) \Big( \Big(\sum_{j=1}^{k} a(T^j v_j \,,\, v_j)\Big)\, k \Big) \\
  &\leq \Big(\sum_{i=0}^{k}a(T^i u_i \,,\, u_i)\Big)  \Big(\sum_{j=0}^{k}a(T^j v_j \,,\, v_j)\Big) \,.
 \end{align*}
For the second term of the sum in \eqref{sum49} use 
the Cauchy-Schwarz inequality with the $ a(\cdot,\cdot) $ inner product, 
and \eqref{ineqT0Ti},
 \begin{align*}
\Big( \sum_{i:(i,0) \in S_{10}} |a(T^iu_i , T^0v_0)| \Big)^2 
& \leq \Big( \sum_{i:(i,0) \in S_{10}} \sqrt{a(T^iu_i \, , \, T^iu_i )} \sqrt{a(T^0v_0 , T^0v_0)} \Big)^2  \\
& = \Big( \sum_{i:(i,0) \in S_{10}} \sqrt{a(T^iu_i \, , \, T^iu_i )}  \Big)^2 \,\, a(T^0v_0 , T^0v_0) \\
& \leq \Big(\sum_{i:(i,0) \in S_{10}} 1 \Big) \Big(\sum_{i:(i,0) \in S_{10}} a(T^iu_i \, , \, T^iu_i ) \Big) \,\, a(T^0v_0 , v_0) \\
& \leq k \, \dfrac{1}{k}\Big(\sum_{i:(i,0) \in S_{10}} a(T^iu_i \, , \, u_i ) \Big) \,\, a(T^0v_0 , v_0) \\
& \leq \Big(\sum_{i=0}^{k}a(T^i u_i \,,\, u_i)\Big)  \Big(\sum_{j=0}^{k}a(T^j v_j \,,\, v_j)\Big) \,. 
\end{align*}
Similarly, for the last term of the sum we have
\begin{align}
\Big( \sum_{j:(0,j) \in S_{01}} |a(T^0u_0 , T^jv_j)| \Big)^2 &\leq \Big(\sum_{i=0}^{k}a(T^i u_i \,,\, u_i)\Big)  \Big(\sum_{j=0}^{k}a(T^j v_j \,,\, v_j)\Big) \,. \nonumber
\end{align}
Combining these four inequalities, it follows that
\begin{align}
 \Big (\sum_{(i,j) \in S} |a(T^i u_i \,,\, T^j v_j)| \Big)^2 \leq 4 \, \Big(\sum_{i=0}^{k}a(T^i u_i \,,\, u_i)\Big)  \Big(\sum_{j=0}^{k}a(T^j v_j \,,\, v_j)\Big) \,.
\end{align}
This shows that $K_1$ exists and 
\begin{align}
 K_1 = 2.
\end{align}
\end{proof}

\begin{lemma}
Let $ V^i_k $ and $ R^i_k $ as in Definition \ref{def_VR_2}. 
Then, Assumption \ref{smooth_psik_nonincr} is satisfied with
\begin{align}
 \delta_{k} 
 = 1 - \dfrac{1}{C_4 (1 + k^2)}, 
 \qquad \psi_{k} =  \dfrac{m_k}{C_4 (1 + k^2)}\,,
\end{align}
if and only if $ m_k $ is chosen so that $ \psi_k $ is non-increasing.
Here, $C_4 = 9 \, C_3$ and $C_3$ is the constant in \eqref{C_3}.
\end{lemma}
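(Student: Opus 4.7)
The plan is to assemble the result directly from the three preceding lemmas that established the constants $w_1$, $K_0$, $K_1$ for this application, together with the general formula \eqref{deltak_choice} that links $\delta_k$ to these three quantities through the SSC convergence bound.

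First I would recall that, by the discussion following Definition \ref{def_VR_2}, we have $\rho(R_k^0 A_0) = 1$ and $\rho(R_k^i A^i) = 1/k$ for $i=1,\dots,k$, so that $w_{1,k} = w_1 = 1 < 2$, verifying Assumption \ref{RiSPD_w1lt2}. Next, by the $K_0$ lemma of this subsection we have $K_0 = C_3\,(1 + k^2)$, and by the $K_1$ lemma we have $K_1 = 2$. Substituting these into \eqref{deltak_choice} yields
\begin{equation*}
\delta_k = 1 - \frac{2 - w_1}{K_0 \, (1 + K_1)^2} = 1 - \frac{1}{C_3 (1+k^2) \cdot 9} = 1 - \frac{1}{C_4(1+k^2)},
\end{equation*}
with $C_4 = 9\,C_3$, which is the claimed expression for $\delta_k$. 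Consequently,
\begin{equation*}
\psi_k = m_k (1 - \delta_k) = \frac{m_k}{C_4(1+k^2)},
\end{equation*}
matching the second formula in the statement.

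Finally, Assumption \ref{smooth_psik_nonincr} is by definition the monotonicity requirement that $\psi_k$ be non-increasing in $k$. Since $\psi_k$ has just been identified explicitly as $m_k/[C_4(1+k^2)]$, the assumption holds if and only if $m_k$ is selected so that this sequence is non-increasing, which is precisely the statement. No obstacle is anticipated: the proof is a direct substitution of the constants obtained in the three preceding lemmas into the general formula \eqref{deltak_choice}, together with the tautological reading of Assumption \ref{smooth_psik_nonincr}. The only point that deserves a brief remark is that $k^2$ grows, so to satisfy the assumption $m_k$ must grow at least like $k^2$ (for instance, $m_k = k^2 + 1$ would suffice to keep $\psi_k$ constant), a condition that will be exploited in the subsequent convergence statement to identify when the multigrid error bound is optimal in $J$.
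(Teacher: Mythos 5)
Your derivation of $\delta_k$ and $\psi_k$ is correct and is exactly the paper's (implicit) argument: substitute $w_1=1$, $K_0=C_3(1+k^2)$, $K_1=2$ into \eqref{deltak_choice} to get $\delta_k = 1 - \tfrac{1}{9C_3(1+k^2)}$, and then read Assumption \ref{smooth_psik_nonincr} tautologically.

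However, your closing remark is wrong and should be deleted: it is \emph{not} true that $m_k$ must grow like $k^2$ in order for Assumption \ref{smooth_psik_nonincr} to hold. Since $\psi_k = m_k/[C_4(1+k^2)]$ has a denominator that is increasing in $k$, the constant choice $m_k=1$ already makes $\psi_k$ decreasing, and the paper explicitly lists $m_k=1$, $m_k=J+1-k$, and $m_k=1+k$ as admissible choices. The quadratic growth $m_k = q(1+k^2)$ is needed only for the \emph{optimality} of the multigrid bound, i.e., to make $\gamma_k = C_4(1+k^2)/(C_4(1+k^2)+2m_k)$ independent of $k$ (and hence of $J$); it is not needed for the assumption, nor for convergence.
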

 Note that $\delta_{k}$ is now increasing.
 Various choices of $m_k$ guarantee Assumption \ref{smooth_psik_nonincr}:
 constant $ m_k = 1 $,
 decreasing $ m_k = J + 1 - k $,
 increasing $ m_k = 1 + k $.
 We now state the convergence result.
\begin{theorem}
  If $ m_k $ is chosen so that $ \psi_k $ is non-increasing,
  the multigrid algorithm \ref{alg_Vcycle} converges with 
\begin{align}
 \gamma_k = \dfrac{C_4 (1 + k^2)}{C_4 (1 + k^2) + 2 \, m_k }\,,
\end{align}
where $ \gamma_k $ is defined in Theorem \ref{mg_conv}, $ k = 0, 1, \ldots, J$.

Moreover, the error bound is optimal (in the sense that it does not depend on the number of multigrid spaces $J$)
if and only if $m_k = q (1+k^2)$ for some $ q \in \mathbb{N} $,
and is given by
\begin{align}
 \gamma_1 = \gamma_2 = \dots = \gamma_J = \dfrac{C_4}{2 q + C_4} \,.
\end{align}
\end{theorem}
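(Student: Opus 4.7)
The plan is to treat this statement as a direct consequence of the general convergence result in Theorem \ref{mg_conv}, combined with the characterization of $\delta_k$ established in the preceding lemma. That lemma furnishes $1 - \delta_k = 1/[C_4 (1+k^2)]$ for this local refinement application, and the hypothesis on $\psi_k$ being non-increasing is exactly Assumption \ref{smooth_psik_nonincr}. All prerequisites of Theorem \ref{mg_conv} are therefore met, and plugging into the formula $\gamma_k = 1/[1 + 2 m_k (1 - \delta_k)]$ from \eqref{gamma_k} yields immediately
\begin{equation*}
\gamma_k = \frac{C_4 (1+k^2)}{C_4 (1+k^2) + 2 m_k},
\end{equation*}
which is the asserted convergence bound.

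For the optimality statement, the ``if'' direction is pure substitution: taking $m_k = q(1+k^2)$ with $q \in \mathbb{N}$ makes $\psi_k = q/C_4$ constant (so Assumption \ref{smooth_psik_nonincr} is trivially satisfied), and the displayed formula collapses to $\gamma_k = C_4/(C_4 + 2 q)$, manifestly independent of $k$ and therefore of $J$. For the ``only if'' direction I would read the displayed equality $\gamma_1 = \gamma_2 = \cdots = \gamma_J$ strictly, and observe from the explicit formula that constancy of $\gamma_k$ in $k$ is equivalent to constancy of the ratio $m_k/(1+k^2)$; setting this ratio equal to $q$ recovers $m_k = q(1+k^2)$. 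The requirement $m_k \in \mathbb{N}$ for every $k$, applied at $k = 0$ which gives $m_0 = q$, then forces $q \in \mathbb{N}$.

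The main (and rather minor) obstacle lies precisely in the interpretation of the word ``optimal.'' Read strictly as level-by-level constancy of the bound, the ``only if'' argument above applies verbatim. Under a weaker reading requiring only uniform boundedness of $\gamma_J$ in $J$, a broader class of sequences (those with $m_k/(1+k^2)$ merely bounded below while $\psi_k$ is non-increasing) would also suffice, and the ``only if'' conclusion would need to be softened. Otherwise, all the genuine technical work has already been carried out in the preceding lemmas that identified $w_1$, $K_0$, $K_1$ and thereby $\delta_k$ for this irregular-grid setting, so this theorem amounts essentially to bookkeeping on top of Theorem \ref{mg_conv}.
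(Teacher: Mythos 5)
Your proposal is correct and follows the same route the paper intends: the paper gives no separate proof for this theorem, treating it as an immediate consequence of Theorem \ref{mg_conv} together with the preceding lemma's identification of $\delta_k$, exactly as you do. Your extra care on the ``only if'' direction (reading optimality as constancy of $m_k/(1+k^2)$ and deducing $q\in\mathbb{N}$) is a reasonable filling-in of a step the paper leaves implicit; the only nitpick is that $m_0$ plays no role since level $0$ uses an exact solver, so one should instead deduce $q\in\mathbb{N}$ from, e.g., $m_1=2q$ and $m_2=5q$ both being integers.
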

We observe that the number of smoothing iterations appears in the error bound 
and this was not shown in \cite{bramble1992analysis}.
 \bornia{Although the choice $ m_k = q (1+k^2) $ is not optimal in terms of computational cost,
   since more smoothing steps are needed on finer grids,
   nevertheless it guarantees that the error bound is independent of the number of levels.}
 
\subsection{Local refinement: overlapping non-nested subdomains,
subproblems on irregular grids 
and exact subsolvers}


We now describe another local refinement application.
We keep the same triangulations as in Definition \ref{def_triang_3}
and the same definition of $V_k$ \giachi{as in the previous local refinement application}.
However, \giachi{the subdomains are chosen as in Definition \ref{def_subd_1}}.
This will lead to a different characterization of the space $ V_k $.

A sketch of the subdomains involved in this application is visible in Figure \ref{local_case_2}.
Moreover, unlike Section  \ref{sec_loc_reg},
the overlapping subdomains \giachi{$ \widehat{\Omega}^k_i$} at each level $k$ are not nested.
\begin{figure}[h]
\centering
\includegraphics[scale=0.6]{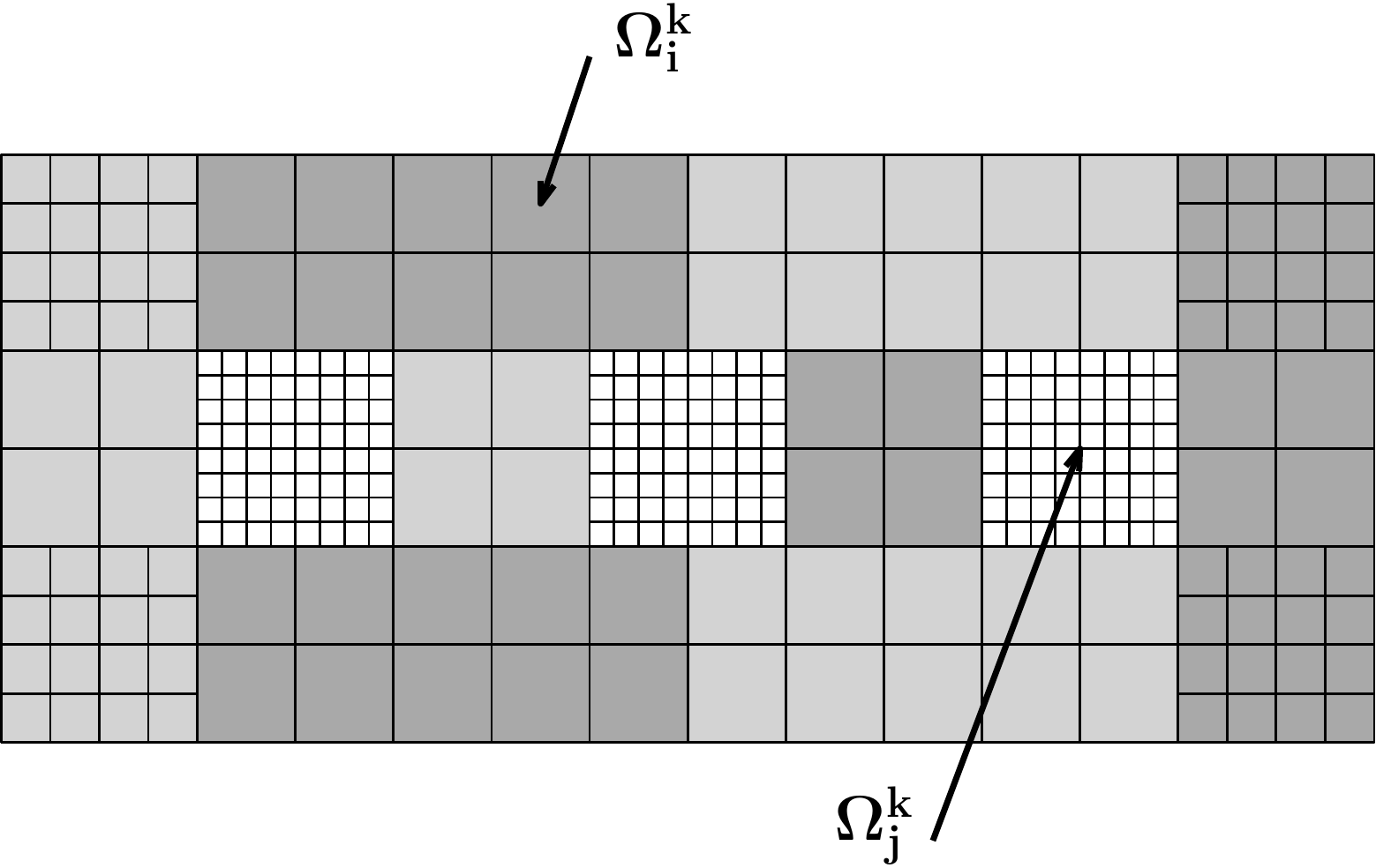}
\caption{\bornia{A subdivision into} non-overlapping subdomains involved in the local refinement application 
\bornia{(different subdomains are identified by a change in the shade of grey)}.}\label{local_case_2}
\end{figure}
\begin{figure}[h]
\centering
\includegraphics[scale=0.6]{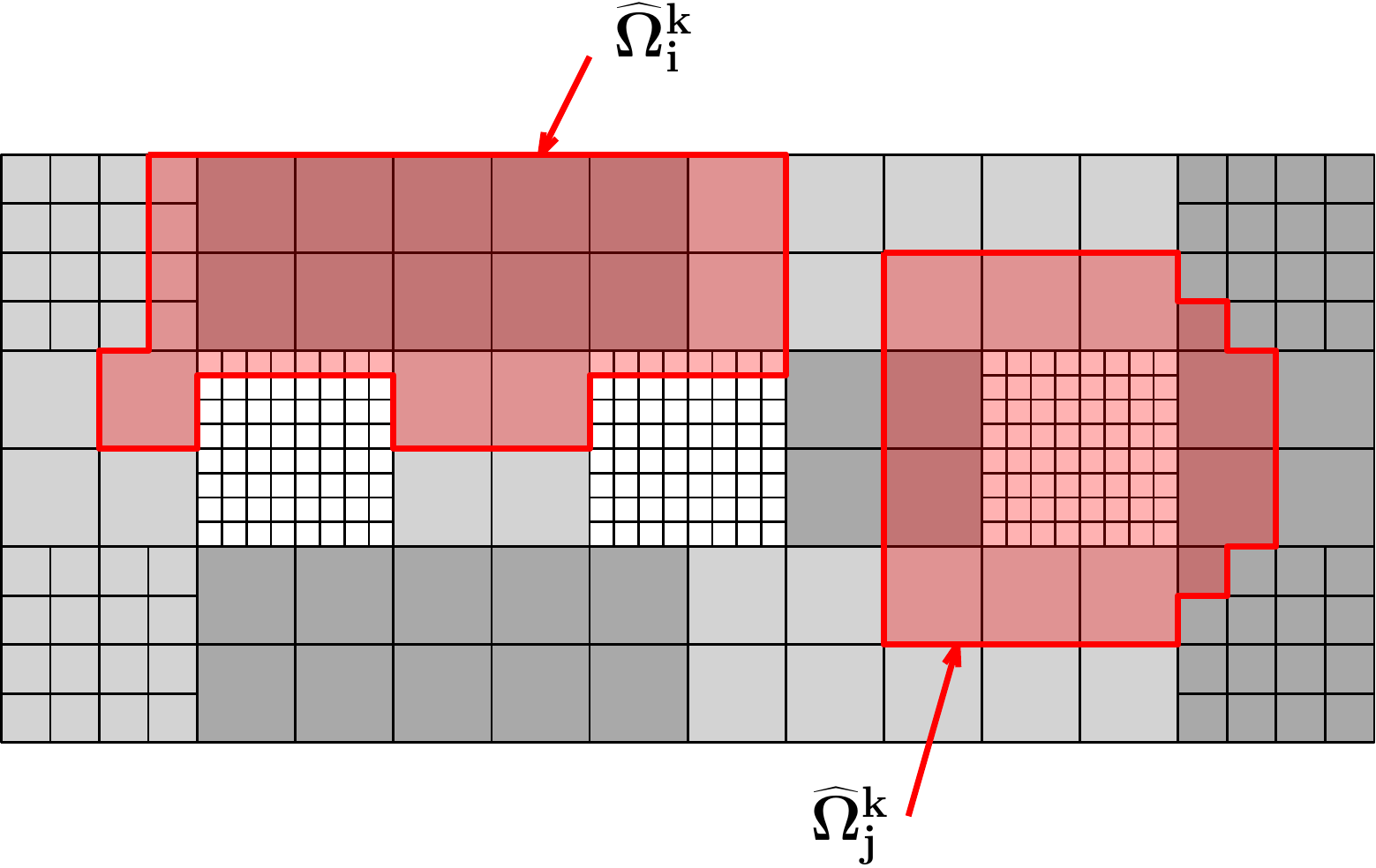}
\caption{\bornia{An example of construction of} overlapping subdomains obtained from the non-overlapping subdomains in Figure \ref{local_case_2}.}\label{local_case_2bis}
\end{figure}
 Let us now define the spaces $ V_k $ and choose the subspaces for its decomposition, and the subsolvers $ R^i_k $.
\begin{definition} \label{def_VR_3}
Given the triangulations $\mathcal{T}_k$ in Definition \ref{def_triang_3} and the subdomains $ \Omega_i $ in Definition \ref{def_subd_1}, 
we set for $ k = 0, \ldots, J $ and for $i=0, \ldots,  p_k $
\begin{align} \label{spaces_smoothers_app3}
\begin{cases}
 V_k  = \{ v \in H_0^1(\Omega) \cap C^0(\Omega) : v|_{\tau} \in \mathcal{P}_1, \, \, \forall \tau \in \mathcal{T}_k \,,  \} \\
 \quad \quad \text{where $\mathcal{T}_k$ is as in Definition \ref{def_triang_3}} \,, 
 \\
 V^i_k  :=
 \begin{cases}
   V_0 \,,                                                 & i = 0 \,, \\
  \{v \in V_k \, | \, \, supp(v) \subseteq \widehat{\Omega}^k_i\}\,,   & i = 1, \dots, p_k \,, \\
\end{cases} \,, \\
 R_k^i  := (A_k^i)^{-1} \,.
\end{cases}
\end{align}
\end{definition}
Since the definition of $ V_k $ in Definition \ref{def_VR_3} coincides with Definition \ref{def_VR_2},
 the $V_k$ again satisfy the nestedness condition \eqref{mg_spaces} \giachi{and the nodal basis does not have any function
 associated to the hanging nodes, because of the continuity requirement}.
Here, we give another characterization of  $ V_k $ based on the non-nested subdomains.
\begin{lemma} \label{app3_Vk}
 Given $ V_k $ and $V^i_k$ in Definition \ref{def_VR_3}, we have
\begin{equation*}
V_k = \sum \limits_{i=0}^{p_k}  V^i_k \,.
\end{equation*}
Moreover, if we denote with $ v_i \in V^i_k $ the components of any $v \in V_k$ (such that $v = \sum\limits_{i=0}^{p_k} v_i$),
then there is a constant $C_k$ dependent only on $k$ such that  
$$ \sum\limits_{i=0}^{p_k} a(v_i , v_i) \leq C_k \, a(v,v)  \quad \forall v \in V_k\,.
$$
\end{lemma}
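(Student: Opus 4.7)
The plan is to proceed in two stages, mirroring the argument of Lemma \ref{app1_Vk} for the uniform refinement case but with the additional care required by the hanging nodes of $\mathcal{T}_k$. The decomposition itself is the easy part; the work is in producing an $H^1$-stable splitting that lands in $V_k$ despite the hanging-node continuity constraints.

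First, I would construct a partition of unity $\{\theta_i\}_{i=1}^{p_k}$ subordinate to the overlapping cover $\{\widehat{\Omega}^k_i\}_{i=1}^{p_k}$, with $\sum_{i=1}^{p_k}\theta_i \equiv 1$ on $\Omega$, $\text{supp}(\theta_i)\subseteq \widehat{\Omega}^k_i$, and $\|\nabla\theta_i\|_{L^\infty} \leq C/h_0$; this last bound is available because the overlap width is proportional to $h_0$ by Definition \ref{def_subd_1}. The $\theta_i$ may be taken piecewise linear on a coarse background grid, independent of $k$.

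Second, for $v \in V_k$ I would set $v_i = \mathcal{I}_k(\theta_i v)$ for $i = 1, \dots, p_k$, where $\mathcal{I}_k : H_0^1(\Omega) \to V_k$ is a Scott--Zhang type quasi-interpolation operator designed to respect the continuity constraints enforced by the hanging nodes of $\mathcal{T}_k$ and to reproduce functions in $V_k$. The coarse component $v_0 \in V_0$ is then defined so that $\sum_{i=0}^{p_k} v_i = v$, which will follow from $\sum_i \theta_i = 1$ together with the linearity and reproduction properties of $\mathcal{I}_k$. By construction $v_i \in V^i_k$. Using $H^1$-stability of $\mathcal{I}_k$ and the product rule,
\begin{equation*}
a(v_i, v_i) \leq c_k\, a(\theta_i v, \theta_i v) \leq C\, c_k \left( \|\theta_i \nabla v\|_{L^2}^2 + \|v \nabla \theta_i\|_{L^2}^2 \right),
\end{equation*}
where $c_k$ denotes the stability constant of $\mathcal{I}_k$ on the irregular mesh $\mathcal{T}_k$. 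Summing over $i$, the finite-overlap property expressed by $g_0$ in \eqref{g_0} bounds the first term by $C\, g_0\, \|\nabla v\|_{L^2}^2$, while the second term is controlled by $C\, g_0\, h_0^{-2}\|v\|_{L^2}^2$ and absorbed through the Poincar\'e inequality together with the norm equivalence \eqref{bil_equiv_H1}. This yields the claimed estimate with $C_k$ depending on $k$ only through $c_k$.

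The main obstacle I anticipate is constructing $\mathcal{I}_k$ so that it is simultaneously $H^1$-stable and compatible with arbitrary hanging-node configurations, and tracking the dependence of its stability constant on the level $k$. On a regular quasi-uniform mesh this construction is classical, but on $\mathcal{T}_k$ one must propagate the linear constraints through edges that may carry arbitrarily many hanging nodes, and this propagation is precisely where the level-dependence of $c_k$, and hence of $C_k$, arises. It is exactly this feature that distinguishes the present bound from the level-independent constant $C_0$ obtained in Lemma \ref{app1_Vk}, and it is consistent with the deterioration of the smoothing action with the level that the subsequent convergence analysis will need to compensate for through an appropriate choice of $m_k$.
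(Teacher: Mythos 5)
Your decomposition $v_i = \mathcal{I}_k(\theta_i v)$ has two genuine gaps, and the paper's proof is structured precisely to avoid both. First, the quantitative content of the lemma is the constant $C_k$: the subsequent analysis needs $K_0 = \widehat{C_0}(1+k^2)$ in order to derive $\delta_k$ and the optimality condition $m_k = q(1+k^2)$. Your argument delegates the entire $k$-dependence to the stability constant $c_k$ of a Scott--Zhang-type operator on the irregular mesh $\mathcal{T}_k$, which you neither construct nor estimate; you correctly identify this as ``the main obstacle'' but leave it unresolved, so the proof does not actually produce a usable $C_k$. The paper sidesteps the need for any interpolant on the irregular mesh: it first splits $v = \sum_{j=0}^{k}\widehat{v}_j$ using the operators $\widehat{Q}_j$ from the preceding nested local refinement application, so that each piece $\widehat{v}_j$ lives in a space $\widehat{V}^j$ built on a quasi-uniform grid where the standard nodal interpolant $\mathcal{I}^j_h$ is classical, and then sets $v_i = \sum_{j=1}^{k}\mathcal{I}^j_h(\theta^k_i\,\widehat{v}_j)$. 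The two factors of $k$ in $C_k \sim 1+k^2$ then arise explicitly from Cauchy--Schwarz over the $k$ terms of this sum and from summing the $k$ bounds in \eqref{brambleineq1}; nothing is hidden inside an interpolation constant.

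Second, your stability estimate fails at the step where the term $C\,g_0\,h_0^{-2}\|v\|_{L^2}^2$ is ``absorbed'' via the Poincar\'e inequality. Poincar\'e gives $\|v\|_{L^2}^2 \leq C_{\Omega}\,|v|_{H^1}^2$ with $C_{\Omega}$ depending on $\Omega$ but not on $h_0$, so this step leaves an irreducible factor $h_0^{-2}$ in $C_k$ --- the classical degradation of a one-level overlapping Schwarz splitting without a working coarse space. Indeed, in your construction the coarse component does no work: since $\sum_i\theta_i \equiv 1$ and $\mathcal{I}_k$ is assumed to reproduce $V_k$, you obtain $\sum_{i=1}^{p_k} v_i = \mathcal{I}_k(v) = v$ and hence $v_0 = 0$. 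The paper's telescoping decomposition is exactly what supplies the missing power of $h_0$: by \eqref{brambleineq1} each piece satisfies $\|\widehat{v}_j\|_{L^2}^2 \leq \widetilde{C}_1\,h_0^2\,|v|_{H^1}^2$, so the $h_0^{-2}$ coming from the inverse estimate is cancelled. To repair your route you would need to apply the partition of unity not to $v$ itself but to $v - v_0$ (or to individual telescoping pieces), with the coarse component chosen so that the remainder has $L^2$ norm of order $h_0\,|v|_{H^1}$.
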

\begin{proof}
We are going to construct a set of functions $ \{v_i\}_{i=0}^{p_k} \in V^i_k \subseteq  V_k $
such that every $ v \in V_k $ can be expressed as their sum.
To this end, let $\{\theta^k_i\}_{i=1}^{p_k}$ be a smooth partition of unity subordinate to the cover $\{\widehat{\Omega}^k_i\}_{i=1}^{p_k}$.
This means that $ \sum_{i=1}^{p_k} \theta^k_i = 1 $, $0 \leq \theta^k_i(x) \leq 1$ for all $x \in  \widehat{\Omega}^k_i$ and $supp(\theta^k_i) \subset \widehat{\Omega}^k_i$,
for all $i=1 \dots, p_k$. 
Let $\widehat{V}^j$ be the subspace of $V_k$ defined in Definition \ref{def_VR_2} in the previous local refinement application.
Then we know that $V_k = \sum_{j=0}^{k} \widehat{V}^j$, so that any $v$ in $V_k$ can be written as $v = \sum_{j=0}^{k} \widehat{v}_j$, 
where $\widehat{v}_j \in \widehat{V}^j$ are given by \eqref{vi_dec}.
\giachi{Define $\mathcal{I}^j_h$ to be the standard nodal interpolant of the finite element space 
$\widehat{V}^j$ for all $j=1,\ldots,k$.
Note that this is well defined since each $\widehat{V}^j$ is built on a quasi-uniform grid.}
Then, for $v \in V_k$, set
\begin{equation}
 v_0 =  \widehat{v}_0, \, \quad v_i = \sum_{j=1}^k \mathcal{I}^j_h( \theta^k_i \, \widehat{v}_j)\,, \quad i = 1, \ldots,  p_k \,.    
\end{equation}
Notice that all the terms in the sum that defines $v_i$ are functions in $\widehat{V}^j \subset V_k$ and  have support in $\widehat{\Omega}^k_i$, therefore they all belong to $V_k^i$. 
Moreover, using the fact that the $\mathcal{I}^j_h$ are linear and projections we have
\begin{align*}
v &= \sum_{j=0}^k \widehat{v}_j = \widehat{v}_0 + \sum_{j=1}^k \widehat{v}_j 
=  \widehat{v}_0 + \sum_{j=1}^k \mathcal{I}^j_h ( \widehat{v}_j )  
=  \widehat{v}_0 + \sum_{j=1}^k \mathcal{I}^j_h (\sum_{i=1}^{p_k}\theta^k_i \widehat{v}_j ) 
\\
&= \widehat{v}_0  + \sum_{j=1}^k\sum_{i=1}^{p_k} \mathcal{I}^j_h( \theta^k_i \, \widehat{v}_j) = \sum_{i=0}^{p_k} v_i.
\end{align*}
To prove the second part of the lemma, let us proceed one summand at a time. 
If $T \in \mathcal{T}_k \cap \widehat{\Omega}^k_{i}$, then using an inverse estimate (see \cite{Brenner}) we get
\begin{align*}
| \mathcal{I}^j_h( \theta^k_i \, \widehat{v}_j) |^2_{H^1(T)} &\leq h_0^{-2} \, || \mathcal{I}^j_h( \theta^k_i \, \widehat{v}_j) ||^2_{L^2(T)} \\
& \leq h_0^{-2} C ||  \theta^k_i \, \widehat{v}_j ||^2_{L^2(T)} \\
& \leq h_0^{-2} C || \widehat{v}_j ||^2_{L^2(T)},
\end{align*}
where the constant $C$ is the bound for the operator norm of $\mathcal{I}^j_h$ and it only depends on the reference element \cite{Brenner}.
Summing over all $T \in \mathcal{T}_k \cap \widehat{\Omega}^k_i$ (remember that we assumed the subdomains align with the triangulation) we obtain
\begin{align*}
 |v_i|^2_{H^1(\Omega)} &= |v_i|^2_{H^1(\widehat{\Omega}^k_i)} \leq \sum\limits_{T \in \mathcal{T}_k \cap \widehat{\Omega}^k_i} \Big(\sum_{j=1}^k |\mathcal{I}^j_h( \theta^k_i \, \widehat{v}_j)|_{H^1(T)} \Big)^2   \\
 & \leq \sum\limits_{T \in \mathcal{T}_k \cap \widehat{\Omega}^k_i} k \,\, \sum_{j=1}^k |\mathcal{I}^j_h( \theta^k_i \, \widehat{v}_j)|^2_{H^1(T)} \\ 
 & \leq  k \,\, \sum_{j=1}^k  h_0^{-2} C || \widehat{v}_j ||^2_{L^2(\widehat{\Omega}^k_i)} \,. 
 \end{align*}
 Summing over the subdomains $\widehat{\Omega}^k_i$, and considering that each point in $\Omega$ is covered only a finite number of times \cite{Dryja} we obtain
 \begin{align}
 \sum\limits_{i=1}^{p_k} |v_i|^2_{H^1(\Omega)} \leq  k \,\, \sum_{j=1}^k  h_0^{-2} \widehat{C} || \widehat{v}_j ||^2_{L^2(\Omega)} \,. 
 \end{align}
Thanks to \eqref{brambleineq1} we can say that
\begin{align}
|| \widehat{v}_j ||^2_{L^2(\Omega)} \leq \widetilde{C}_1 \,\, h_0^2 \,\,|v|^2_{H^1(\Omega)}.
\end{align}
Therefore, using the previous results and the Poincar\'e inequality we have
\begin{align*}
 \sum\limits_{i=1}^{p_k} a(v_i,v_i) \leq \overline{\overline{{C}}} \sum\limits_{i=1}^{p_k} |v_i|^2_{H^1(\Omega)} \leq k^2 \overline{\overline{\overline{{C}}}} |v|^2_{H^1(\Omega)} 
 \leq k^2 \widetilde{C} a(v,v) \,.
\end{align*}
Again by \eqref{brambleineq1} we know that $ a(\widehat{v}_0 , \widehat{v}_0) \leq C_2 a(v , v) $, hence if we let
$\widehat{C_0} = \max \{C_2 , \widetilde{C}\}$ we can conclude with
\begin{align*}
 \sum\limits_{i=0}^{p_k} a(v_i,v_i) \leq  \widehat{C_0} \, (1+k^2) a(v,v) \,.
\end{align*}
\end{proof}
 The proof of this lemma for a uniform refinement case 
 relies on the uniform boundedness of the standard nodal interpolator on $V_k$.
 In the case where an irregular grid is employed a nodal interpolator in the classical sense cannot
 be defined on $V_k$.
 An alternative to the solution we adopted in our proof could be to use interpolation operators
 specifically designed for irregular grids as in \cite{heuveline2004interpolation}.

For the subsolvers we clearly have that $R^i_k A^i = I$ for all $i=0, \dots, p_k$ and so again we have $w_{1,k} = w_1 = 1$.
Assumption \ref{RiSPD_w1lt2} is then true.
However, from a practical point of view, defining problems on irregular grids actually requires the 
implementation of the constraints that make the nodal basis of $V_k$ continuous, as in \cite{fries2011hanging}.
Now we can show the existence of $K_0$ and $K_1$.
\begin{lemma}
Let $ V^i_k $ and $ R^i_k $ as in Definition \ref{def_VR_3}. 
Then, there exists a constant $K_0$ satisfying Assumption \ref{exist_K0}.
\end{lemma}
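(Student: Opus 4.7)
The plan is to observe that this lemma is an almost immediate corollary of Lemma \ref{app3_Vk}, which already establishes the stability of a suitable subspace decomposition of $V_k$ in the energy norm. The only remaining piece is to connect the quantity $((R_k^i)^{-1} v_i, v_i)$ appearing in Assumption \ref{exist_K0} to the energy norm terms $a(v_i, v_i)$ that Lemma \ref{app3_Vk} controls.

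First I would exploit the choice of exact subsolvers $R_k^i = (A_k^i)^{-1}$ in Definition \ref{def_VR_3}, so that $(R_k^i)^{-1} = A_k^i$. Then, by the defining property of the local operator $A_k^i$, for any $v_i \in V_k^i$ one has $((R_k^i)^{-1} v_i, v_i) = (A_k^i v_i, v_i) = a(v_i, v_i)$. This mirrors exactly the corresponding step in the proof of Lemma \ref{app1_K0} for the uniform refinement case, and it makes the energy bound from Lemma \ref{app3_Vk} directly available.

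Next, given $v \in V_k$, I would invoke Lemma \ref{app3_Vk} to produce the decomposition $v = \sum_{i=0}^{p_k} v_i$ with $v_i \in V_k^i$ satisfying $\sum_{i=0}^{p_k} a(v_i , v_i) \leq \widehat{C_0}(1+k^2)\, a(v,v)$. Summing the identity from the previous step over $i$ and chaining with this inequality yields $\sum_{i=0}^{p_k} ((R_k^i)^{-1} v_i , v_i) \leq \widehat{C_0}(1+k^2)\, a(v,v) = \widehat{C_0}(1+k^2)\, (A_k v, v)$, which establishes Assumption \ref{exist_K0} with $K_0 = \widehat{C_0}(1+k^2)$.

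There is no real obstacle here, since all the heavy lifting, namely the partition-of-unity construction, the inverse estimate, the nodal interpolant bound, and the use of the Bramble-Pasciak-Xu operators $\widehat{Q}_i$, was already carried out in Lemma \ref{app3_Vk}. I would, however, emphasize in the write-up that the resulting $K_0$ depends quadratically on $k$, exactly matching the nested-subdomain case of Section \ref{sec_loc_reg}; this $k$-dependence is precisely what will dictate the growth of $m_k$ required by Assumption \ref{smooth_psik_nonincr} to recover an optimal multigrid error bound.
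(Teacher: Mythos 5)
Your proposal is correct and follows the paper's proof essentially verbatim: both use the exact subsolver choice $R_k^i = (A_k^i)^{-1}$ to rewrite $((R_k^i)^{-1}v_i, v_i)$ as $a(v_i, v_i)$ and then invoke the decomposition and stability bound of Lemma \ref{app3_Vk} to conclude with $K_0 = \widehat{C_0}(1+k^2)$. Nothing is missing; as you note, all the substance resides in Lemma \ref{app3_Vk}.
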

\begin{proof}
Considering the decomposition of $v$ given by Lemma \ref{app3_Vk}, we have 
\begin{align*}
 \sum_{i=0}^{p_k} ((R^i_k)^{-1}v_i , v_i) & = \sum_{i=0}^{p_k} (A^i_k v_i , v_i) 
 = \sum_{i=0}^{p_k} a(v_i , v_i)  
 \leq \widehat{C_0} (1+k^2) \, a(v,v) \,. 
\end{align*}
This shows that $K_0$ exists and 
\begin{align}\label{C_hat}
 K_0 = \widehat{C_0} (1+k^2).
\end{align}
\end{proof}

\begin{lemma}
Let $ V^i_k $ and $ R^i_k $ as in Definition \ref{def_VR_3}. 
Then, there exists a constant $ K_1 $ satisfying Assumption \ref{exist_K1}.
\end{lemma}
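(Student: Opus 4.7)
The plan is to mirror almost verbatim the argument of Lemma \ref{app1_K1}, since the setup here has the same structural ingredients: overlapping non-nested subdomains built via an $h_0$-tubular neighborhood of a non-overlapping partition aligned with the triangulation, together with exact subsolvers $R_k^i = (A_k^i)^{-1}$. The exactness of the subsolvers is the key lever: it forces $T^i = P^i$, hence $a(T^i u, T^i u) = a(P^i u, P^i u) = a(P^i u, u) = a(T^i u, u)$, which is what made the strengthened Cauchy--Schwarz manipulations in Lemma \ref{app1_K1} go through. The continuity constraint in $V_k$ and the presence of hanging nodes in $\mathcal{T}_k$ do not enter Assumption \ref{exist_K1} at all, since the inequality is purely algebraic/geometric once the subspace projections are available.

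First I would introduce the symmetric connectivity matrix
\begin{equation*}
G_{ij} = \begin{cases} 1 & \widehat{\Omega}_k^i \cap \widehat{\Omega}_k^j \neq \emptyset, \\ 0 & \text{otherwise,} \end{cases}
\qquad g_0 = \max_{i=1,\dots,p_k} \sum_{j=1}^{p_k} G_{ij} = \|G\|_\infty,
\end{equation*}
and argue that $g_0$ is uniformly bounded with respect to $k$: because the overlap is constructed by dilating each $\Omega_k^i$ by the coarse-grid thickness $h_0$ rather than by the local mesh size $h_k$, the number of neighbors that can meet a given subdomain is controlled solely by the shape regularity of the coarse partition. Then I would decompose the index set $S$ into the four disjoint pieces $S_{00}, S_{10}, S_{01}, S_{11}$ exactly as in Lemma \ref{app1_K1}, and for each piece bound $\sum |a(T^i u_i, T^j v_j)|$ using Cauchy--Schwarz in the $a(\cdot,\cdot)$ inner product, the symmetric positive semi-definiteness of $T^i$ from Lemma \ref{lemma_Ti}, the identity $a(T^i u_i, T^i u_i) = a(T^i u_i, u_i)$ available from exact solvers, and the disjointness property that $a(T^i u_i, T^j v_j) = 0$ whenever $\widehat{\Omega}_k^i \cap \widehat{\Omega}_k^j = \emptyset$. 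The $S_{11}$ term is handled by inserting $G_{ij}$ into the sum and invoking $\rho(G) \leq \|G\|_\infty = g_0$ as in the reference cited in the uniform case.

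Combining the four estimates in the same pattern as in Lemma \ref{app1_K1} yields
\begin{equation*}
\Big(\sum_{(i,j)\in S} |a(T^i u_i, T^j v_j)|\Big)^2 \leq 4(1 + 2 g_0 + g_0^2)\Big(\sum_{i=0}^{p_k} a(T^i u_i, u_i)\Big)\Big(\sum_{j=0}^{p_k} a(T^j v_j, v_j)\Big),
\end{equation*}
so that Assumption \ref{exist_K1} holds with $K_1 = 2(1+g_0)$, independently of the multigrid level $k$. The main obstacle, more conceptual than technical, is the uniform boundedness of $g_0$: one must be careful that the dilation radius in Definition \ref{def_subd_1} is the coarse scale $h_0$ (and not $h_k$), otherwise the number of subdomain overlaps could grow with $k$ and destroy the level-independence of $K_1$. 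Beyond that, the rest of the proof is a routine transcription of the uniform refinement argument.
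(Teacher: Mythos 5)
Your proposal is correct and follows exactly the route the paper takes: the paper's own proof is a one-line appeal to the uniform-refinement argument of Lemma \ref{app1_K1}, yielding the same constant $K_1 = 2(1+g_0)$, and your transcription of that argument (exact subsolvers giving $T^i = P^i$, the connectivity matrix $G$, the four-way split of $S$, and $\rho(G)\leq g_0$) is precisely what is intended. Your added remark that the uniform boundedness of $g_0$ hinges on the overlap being measured at the coarse scale $h_0$ is a sound and worthwhile observation, though the paper does not spell it out.
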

\begin{proof}
The existence of $K_1$ can be carried out exactly as for Lemma \ref{app1_K1} 
concerning the case of uniform refinement.
Therefore $K_1$ exists and 
\begin{align}
 K_1 = 2 \, (1+g_0).
\end{align}
\end{proof}

The next lemma immediately follows.
\begin{lemma}
Let $ V^i_k $ and $ R^i_k $ as in Definition \ref{def_VR_3}. 
Then, Assumption \ref{smooth_psik_nonincr} is satisfied with
\begin{align}
 \delta_{k} = 
  1 - \dfrac{1}{\widehat{C_0}(1+k^2)(3+ 2 \, g_0)^2},
 \qquad \psi_{k} =  \dfrac{m_k}{\widehat{C_0}(1+k^2)(3+ 2 \, g_0)^2}\,,
\end{align}
if and only if $ m_k $ is chosen so that $ \psi_k $ is non-increasing.
Here, $\widehat{C_0}$ is the constant in \eqref{C_hat} and $g_0$ is defined in \eqref{g_0}.
\end{lemma}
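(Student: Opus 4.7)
The plan is to verify this lemma by directly combining the three prerequisite facts already established for this local refinement setting: that $w_{1,k}=1$ from the choice $R^i_k = (A^i_k)^{-1}$ (so Assumption \ref{RiSPD_w1lt2} holds), that $K_0 = \widehat{C_0}(1+k^2)$ from the previous lemma using the decomposition of Lemma \ref{app3_Vk}, and that $K_1 = 2(1+g_0)$ obtained by the same argument as in Lemma \ref{app1_K1}. The rest is pure substitution into the general bridge formula \eqref{deltak_choice} between the SSC convergence constants and the multigrid smoother parameter $\delta_k$.

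First I would recall \eqref{deltak_choice}, namely $\delta_k = 1 - \dfrac{2-w_1}{K_0(1+K_1)^2}$, which is admissible because the three SSC Assumptions \ref{RiSPD_w1lt2}, \ref{exist_K0}, \ref{exist_K1} have just been verified for this application. Plugging in $w_1 = 1$, $K_0 = \widehat{C_0}(1+k^2)$, and $K_1 = 2(1+g_0)$, the numerator becomes $2-1=1$ and the denominator becomes $\widehat{C_0}(1+k^2)\bigl(1+2(1+g_0)\bigr)^2 = \widehat{C_0}(1+k^2)(3+2g_0)^2$, yielding the stated $\delta_k$. The corresponding $\psi_k = m_k(1-\delta_k)$ follows immediately.

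It remains only to interpret Assumption \ref{smooth_psik_nonincr}. Since the statement of that assumption is precisely the requirement that the sequence $\psi_k$ be non-increasing in $k$, the ``if and only if'' is a tautological restatement of the definition once the explicit form of $\psi_k$ is in hand. I would conclude by noting that, in contrast with the uniform refinement case where $\delta_k$ was independent of $k$, here $\delta_k$ is strictly increasing in $k$, so the monotonicity requirement on $\psi_k$ genuinely constrains how $m_k$ must grow with the level.

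The main (minor) obstacle is really only a bookkeeping one: one has to make sure the constant $\widehat{C_0}$ used here is the one from equation \eqref{C_hat} and that the preceding existence lemmas for $K_0$ and $K_1$ are invoked with the correct, level-dependent values. No new analytic argument is required beyond assembling these ingredients.
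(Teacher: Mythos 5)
Your proposal is correct and follows exactly the route the paper intends: the lemma is stated there without proof as an immediate consequence of substituting $w_1=1$, $K_0=\widehat{C_0}(1+k^2)$, and $K_1=2(1+g_0)$ into the formula $\delta_k = 1 - \frac{2-w_1}{K_0(1+K_1)^2}$, with the ``if and only if'' being the definition of Assumption \ref{smooth_psik_nonincr} applied to the resulting $\psi_k$. Nothing is missing.
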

The constant $\delta_{k}$ is again increasing.
Consequently the convergence bound for the multigrid algorithm is obtained.
\begin{theorem}
If $ m_k $ is chosen so that $ \psi_k $ is non-increasing,
 the multigrid algorithm \ref{alg_Vcycle} converges with 
\begin{align}
 \gamma_k = \dfrac{C_5 (1 + k^2)}{C_5 (1 + k^2) + 2 \, m_k },
\end{align}
where $ \gamma_k $ is defined in Theorem \ref{mg_conv}
and $ C_5 = \widehat{C_0} \, (3+ 2 \, g_0)^2 $.

Moreover, the error bound is optimal (in the sense that it does not depend on the number of multigrid spaces $J$)
if and only if $m_k = q (1+k^2)$ for some $ q \in \mathbb{N} $,
and is given by
\begin{align}
 \gamma_1 = \gamma_2 = \dots = \gamma_J = \dfrac{C_5}{2 q + C_5}\,.
\end{align}
\end{theorem}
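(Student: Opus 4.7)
The plan is to assemble the statement as an essentially immediate corollary of Theorem \ref{mg_conv} once the preceding lemma characterizing $\delta_k$ for this application is plugged in. First I would recall from the preceding lemma that
\[
1 - \delta_k = \frac{1}{\widehat{C_0}(1+k^2)(3+2g_0)^2} = \frac{1}{C_5(1+k^2)},
\]
with $C_5 = \widehat{C_0}(3+2g_0)^2$, and that Assumptions \ref{smooth_SPSD}, \ref{smooth_deltak_lt1}, \ref{smooth_psik_nonincr} are all satisfied precisely under the stated hypothesis on $m_k$ (so that $\psi_k$ is non-increasing). This is the content of the work done in the preceding lemmas on $K_0$, $K_1$ and $w_1$ for this third application, combined with the choice \eqref{deltak_choice} of $\delta_k$.

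Next I would invoke Theorem \ref{mg_conv}, which gives
\[
\gamma_k = \frac{1}{1 + 2 m_k (1-\delta_k)}.
\]
Substituting the expression for $1-\delta_k$ yields
\[
\gamma_k = \frac{1}{1 + \dfrac{2 m_k}{C_5(1+k^2)}} = \frac{C_5(1+k^2)}{C_5(1+k^2) + 2 m_k},
\]
which is the first claim of the theorem. The convergence of the algorithm follows from the fact that $\gamma_k < 1$ whenever $m_k \geq 1$.

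For the optimality part, I would observe that $\gamma_k$ is independent of $k$ if and only if the ratio $m_k/(1+k^2)$ is constant in $k$. Since $m_k \in \mathbb{N}$ and $1+k^2 \in \mathbb{N}$, this forces $m_k = q(1+k^2)$ for some positive rational $q$ that in fact must be a positive integer (as $m_1 = 2q$ must be an integer and, more strongly, $m_k$ must be a natural number for every $k$; taking $k=1$ and $k=2$ gives $m_1/2 = m_2/5$, so $q = m_1/2$, and requiring $m_k$ integer for all $k$ up to $J$ leaves $q \in \mathbb{N}$ as the only option). With this choice $1+2 m_k(1-\delta_k) = 1 + 2q/C_5 \cdot C_5 / 1 = 1 + 2q/C_5$, so
\[
\gamma_1 = \gamma_2 = \cdots = \gamma_J = \frac{C_5}{C_5 + 2q}.
\]
The only delicate point is the ``only if'' direction of the optimality claim, i.e.\ ruling out non-polynomial dependencies of $m_k$ on $k$ that might accidentally yield a constant $\gamma_k$; this is handled by the elementary arithmetic observation above. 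Otherwise the proof is mechanical substitution into Theorem \ref{mg_conv}, so no further difficulty is expected.
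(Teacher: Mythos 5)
Your proposal is correct and follows exactly the route the paper intends: the paper itself states this theorem without a written proof, treating it as an immediate consequence of substituting the value $1-\delta_k = 1/\bigl(C_5(1+k^2)\bigr)$ from the preceding lemma into the formula $\gamma_k = 1/\bigl(1+2m_k(1-\delta_k)\bigr)$ of Theorem \ref{mg_conv}, with the optimality clause reduced to the constancy of $m_k/(1+k^2)$. Your extra care on the ``only if'' direction and on why $q$ must be an integer (using integrality of both $m_1=2q$ and $m_2=5q$) is a small but welcome addition beyond what the paper records.
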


\section{Conclusions}

In this paper we performed a convergence analysis 
of a multigrid algorithm 
for symmetric elliptic PDEs under no regularity assumptions
with smoothers of SSC type.
In particular, we focused on the dependence of the multigrid error bound 
on the number \giachi{of smoothing steps.
This represents a novel result for the case of no-regularity assumptions.
We provided an analysis that can be used for any smoothing procedure of symmetric SSC type.
We then utilized this framework to address  
uniform and local refinement applications
and study convergence bounds for the multigrid error.
Our theory allows an arbitrary number of hanging nodes on a given edge of the triangulation.}
A judicious choice of the subdomain solvers and of the number of smoothing steps at each level
can avoid the dependence of the multigrid error bound on the total number of multigrid levels.
To this end, proper decompositions of the finite element spaces had to be derived in the analysis.
For the uniform refinement case, a uniform bound for the multigrid error can be obtained,
even regardless of the choice of the smoothing steps.
For the local refinement applications, we described 
two different subspace decompositions  of the multigrid space using overlapping nested or non-nested subdomains
that correspond to different ways of enforcing the continuity 
of the finite element space \giachi{when hanging nodes are present}.
In both cases, we show that convergence can be obtained and optimality can be guaranteed 
by appropriately choosing the number of smoothing steps for each refinement level.
A computational analysis of the methods proposed in this paper will be subject to future investigation.

\section{Acknowledgments}

This work was supported by the National Science Foundation grant DMS-1412796.

\section*{References}
\bibliography{mgdd}


\end{document}